\newtheorem{theorem}{Theorem}
\newtheorem{lemma}[theorem]{Lemma}
\newtheorem{proposition}[theorem]{Proposition}
\newtheorem{corollary}[theorem]{Corollary}
\theoremstyle{definition}
\newtheorem{definition}[theorem]{Definition}
\newtheorem{remark}[theorem]{Remark}
\def\defn#1{Definition~\ref{def:#1}}
\def\thm#1{Theorem~\ref{thm:#1}}
\def\lem#1{Lemma~\ref{lem:#1}}
\def\rmark#1{Remark~\ref{rmark:#1}}
\newcommand{\R}{\mathbb{R}}
\newcommand{\Z}{\mathbb{Z}}
\newcommand{\simpR}{\operatorname{Simp}_R}
\newcommand{\ds}{\displaystyle}
\newcommand{\TC}{\operatorname{TC}}
\newcommand{\arc}[1]{\gamma_{#1}}
\newcommand{\Len}{\operatorname{Len}}
\newcommand{\bdry}{\partial}
\newcommand{\transverse}{\pitchfork}
\newcommand{\SO}{\operatorname{SO}}
\newcommand{\Span}{\operatorname{Span}}
\newcommand{\pij}{\pi_{ij}}
\newcommand{\sijk}{s_{ijk}}
\newcommand\norm[1]{\left| #1 \right|}
\newcommand{\cnm}{C_n[M]}
\newcommand{\cnmo}{C_n(M)}
\newcommand{\cn}{\protect\overrightarrow{\mathbf{n}} }
\newcommand{\p}{\protect\overrightarrow{\mathbf{p}} }
\newcommand{\q}{\protect\overrightarrow{\mathbf{q}} }
\newcommand{\cu}{\protect\overrightarrow{\mathbf{u}} }
\newcommand{\cv}{\protect\overrightarrow{\mathbf{v}} }
\newcommand{\cw}{\protect\overrightarrow{\mathbf{w}} }
\newcommand{\x}{\protect\overrightarrow{\mathbf{x}} }
\newcommand{\bfa}{\mathbf{a}}
\newcommand{\bfe}{\mathbf{e}}
\newcommand{\bfp}{\mathbf{p}}
\newcommand{\bfq}{\mathbf{q}}
\newcommand{\bfr}{\mathbf{r}}
\newcommand{\bfu}{\mathbf{u}}
\newcommand{\bfv}{\mathbf{v}}
\newcommand{\bfw}{\mathbf{w}}
\newcommand{\bfx}{\mathbf{x}}
\newcommand{\cnr}{C_n[\R^k]}
\newcommand{\cnro}{C_n(\R^k)}
\newcommand{\cfrt}{C_4[\R^2]}
\newcommand{\cfr}{C_4[\R^k]}
\newcommand{\cfro}{C_4(\R^k)}
\newcommand{\qcfr}{\hat{C}_4[\R^k]}
\newcommand{\cfs}{C_4[S^1]}
\newcommand{\cns}{C_n[S^1]}
\newcommand{\cnso}{C_n(S^1)}
\newcommand{\cfg}{C_4[\gamma]}
\newcommand{\cfog}{C^0_4[\gamma]}
\newcommand{\qcfog}{\hat{C}^0_4[\gamma]}
\newcommand{\cng}{C_n[\gamma]}
\newcommand{\ccng}{C^0_n[\gamma]}
\newcommand{\cnf}{C_n[f]}
\newcommand{\slq}{Slq}
\newcommand{\qslq}{\widehat{S}lq}
\newcommand{\FTC}{\operatorname{FTC}}
\newcommand{\FTCWC}{\operatorname{FTCWC}}
\newcommand{\bdy}{\partial}
\newcommand{\cross}{\times}
\def\co{\colon\!}
\let\mgp=\marginpar \marginparwidth18mm \marginparsep1mm
\def\marginpar#1{\mgp{\raggedright\tiny #1}}
\let\lbl=\label
\def\label#1{\lbl{#1}\ifinner\else\marginpar{\ref{#1} #1}\ignorespaces\fi}
\newcommand{\sqrtext}{square-like quadrilateral }
\newcommand{\sqrtexts}{square-like quadrilaterals }
\newcommand{\sqrtextp}{square-like quadrilateral. }
\newcommand{\sqrtextsp}{square-like quadrilaterals. }
\newcommand{\paren}{parenthesization }
\newcommand{\parens}{parenthesizations }
\begin{document}
\title[$n$-gons and square-peg - preprint]
     {Transversality in Configuration Spaces and the ``Square-Peg'' theorem}
\author{Jason Cantarella}
\altaffiliation{University of Georgia, Mathematics Department, Athens GA}
\noaffiliation
\author{Elizabeth Denne}
\altaffiliation{Washington \& Lee University, Department of Mathematics, Lexington VA}
\noaffiliation
\author{John McCleary}
\altaffiliation{Vassar College, Mathematics Department, Poughkeepsie NY}
\noaffiliation

\begin{abstract} 
We prove a transversality ``lifting property'' for compactified configuration spaces as an application of the multijet transversality theorem: the submanifold of configurations of points on an arbitrary submanifold of Euclidean space may be made transverse to any submanifold of the configuration space of points in Euclidean space by an arbitrarily $C^1$-small variation of the initial submanifold, as long as the two submanifolds of compactified configuration space are boundary-disjoint. We use this setup to provide attractive proofs of the existence of a number of ``special inscribed configurations'' inside families of spheres embedded in $\R^n$ using differential topology. For instance, there is a $C^1$-dense family of smooth embedded circles in the plane where each simple closed curve has an odd number of inscribed squares, and there is a $C^1$-dense family of smooth embedded $(n-1)$-spheres in $\R^n$ where each sphere has a family of inscribed regular $n$-simplices with the homology of $O(n)$. 
\end{abstract}
\date{\today}
\maketitle

{\em Authors' note:} This paper will not be published in this form, but instead, has been split into three separate papers \cite{Slq,FTCWC,Simplices}. These papers will be published separately.  This paper has been cited extensively in the literature and so has been left on the arXiv as a reference to the reader.

\section{Introduction}\label{sect:intro}

Given a simple closed curve (a Jordan curve) $\gamma$ in $\R^2$, can we find four points on $\gamma$ that form a square? This question was posed by Toeplitz in 1911 \cite{Toeplitz:1911ta}
and it has drawn the attention of many mathematicians over the intervening century. Thinking of the Jordan curve as a ``round hole'', the problem has been affectionately dubbed the ``square-peg problem.'' We say that the square is \emph{inscribed} in $\gamma$ when the vertices lie on the curve. We do not require that the square lie entirely in the interior of the curve. Progress on the square-peg problem has chiefly been extension of the class of simple closed curves for which the square can be found. (The interested reader can find a number of survey articles such as \cite{MR1133201, Mat-SP-survey, Pak-Discrete-Poly-Geom}. )

\begin{figure}[t]
\hfill
\includegraphics[width=1.25in]{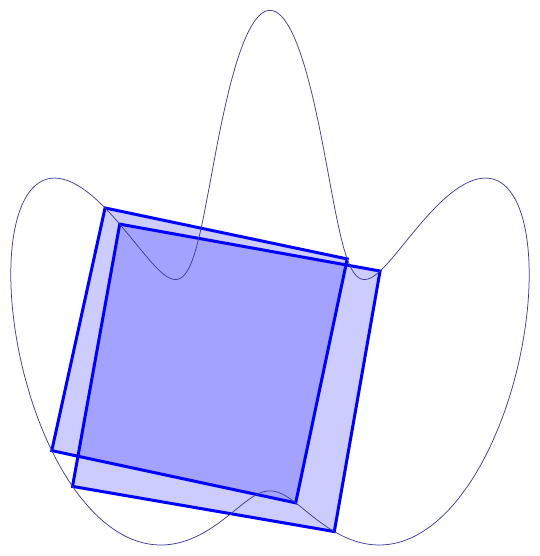}
\hfill
\includegraphics[width=1.25in]{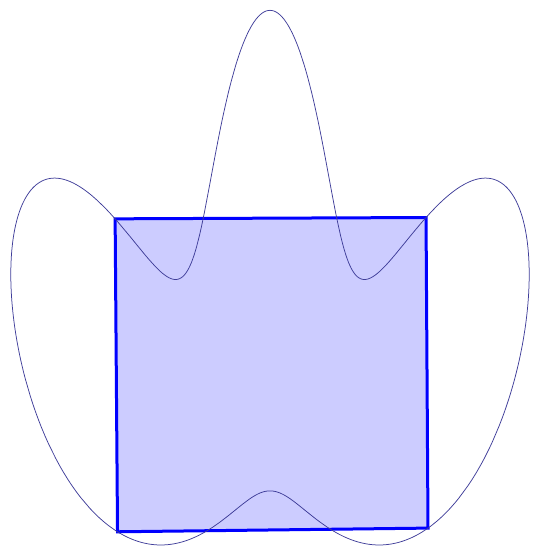}
\hfill
\includegraphics[width=1.25in]{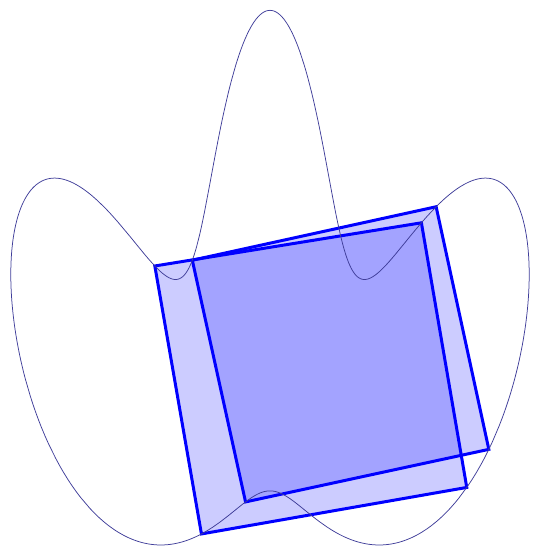}
\hfill
\hphantom{.}
\caption{This picture shows the five squares inscribed on an irregular planar curve. It turns out to be the case that the manifold of inscribed 4-tuples on this curve is transverse to the manifold of squares in the plane. Hence the squares are isolated and there are an odd number of squares. In general, our theorems guarantee only that a curve arbitrarily $C^1$-close to this one has this property.  
\label{fig:firstexample}}
\end{figure} 

Our goals are different. First, by placing the problem in the context of configuration spaces and their subspaces, we have opened up a set of tools from differential topology that allow fresh viewpoints through some powerful methods. Our conclusions include the previous work and show how differentiability assumptions can deliver a strong sense in which squares appear generically. The use of the multijet transversality theorem \cite{Golubitsky:1974iu} is new and holds promise for the application of differential topological methods to other configuration problems.

Here is the heart of our method: If we consider the (compactified) configuration space $C_4[\R^2]$ of 4-tuples of points in the plane as an 8-dimensional manifold with boundary (and corners), then Toeplitz's question can be rephrased more simply as a question about the intersections of the 4-dimensional submanifold of 4-tuples of points on $\gamma$, called $C_4[\gamma]$, with the 4-dimensional submanifold of squares in $\R^2$. 

We can see with a little effort that for a standard ellipse, these submanifolds intersect in four points corresponding to cyclic relabelings of a single inscribed square. It is therefore natural to try to show that squares are transverse to inscribed configurations in the ellipse and use an isotopy from the ellipse to $\gamma$ to connect the square on the ellipse to a cobordant family of squares on the target curve. 

This program requires us to face a few technical obstacles. First, the square might shrink away during the isotopy. We overcome this obstacle by analyzing the (compactified) boundary of our submanifolds of inscribed configurations and showing that, in a precise sense, $C^1$ curves do not admit infinitesimal squares. Second, we do not know that the submanifold of squares is transverse to the submanifold of inscribed configurations on $\gamma$. We may vary the submanifold of inscribed configurations using the standard transversality theorem for manifolds to make it transverse, of course, but there is no \emph{a priori} guarantee that the varied submanifold consists of inscribed configurations on any single curve. We deal with this problem by an application of the multijet transversality theorem~\cite{Golubitsky:1974iu}. Third, it turns out to be the case that the four intersections of the submanifold of squares with the submanifold of inscribed quadruples on the ellipse alternate sign. To count squares we must mod out by cyclic relabeling of vertices and pass to $\Z/2\Z$ intersection theory. 

The method we use for squares is an example of a general approach
to such ``special inscribed configuration'' problems: Show that the configurations one is looking for form a submanifold $Z$ of configuration space to establish smoothness, prevent ``shrink outs'' by showing that $Z$ is boundary-disjoint from the submanifold of inscribed configurations $C_n[\gamma]$, find the (transverse) intersection of $Z$ and $C_n[\gamma_0]$ explicitly in a base case, use our transversality theorem to conclude that a submanifold $\gamma'$ near the target submanifold $\gamma$ also has $C_n[\gamma'] \transverse Z$. Finally, use standard methods to build a isotopy from $C_n[\gamma_0]$ to $C_n[\gamma'] $ that is transverse to $Z$ at every step of the way.

In addition to counting squares (Theorem~\ref{thm:squarepeg}), we show as another example of these methods that there is a $k(k-1)/2$ dimensional family of inscribed simplices of any edgelength ratio in a generic embedding of $S^{k-1}$ in $\R^k$ (Theorem~\ref{thm:simplices}). 

It is important to note that while our results provide a unified and attractive view of this family of theorems about special inscribed configurations, they do not directly address the remaining open territory in Toeplitz's question: We give, in the Appendix, an extension of our results to prove that there exists at least one square on any embedded curve of finite total curvature without cusps, but this class of curves is certainly less general than the family of curves for which Stromquist proved the square peg theorem in \cite{MR1045781}.

\section{Configuration Spaces}
\label{sect:config}

The compactified configuration space of $n$ points in $\R^k$ is the natural setting for both the square-peg and inscribed polygon problem. A reader familiar with configuration spaces may skip much of this section. However we recommend paying attention to the notation we have used for the spaces, points in the spaces and the strata.  \defn{pijsijk}, \defn{config}, and \rmark{notation} are particularly useful. This section provides a brief overview of (compactified) configuration spaces. There are many versions of this classical material (see for instance~\cite{MR1259368,MR1258919}). We follow Sinha~\cite{newkey119} as this gives a geometric viewpoint appropriate to our setting.

\begin{definition}\label{def:openconfig}
Given an $m$-dimensional smooth manifold $M$, let  $M^{\times n}$ denote $n$ copies of $M$, and define $\cnmo$ to be the subspace of points  ${\mathbf{p}}=(p_1,\dots, p_n)\in M^{\times n}$ such that $p_j\neq p_k$ if $j\neq k$. Let $\iota$ denote the inclusion map of $\cnmo$ in~$M^{\times n}$. 
\end{definition}

The space $\cnmo$ is an open submanifold of $M^{\times n}$. Our goal is to compactify $\cnmo$ to a closed manifold with boundary and corners, which we will denote $\cnm$, without changing its homotopy type. The resulting manifold will be homeomorphic to $M^{\times n}$ with an open neighborhood of the fat diagonal
removed. Recall that the fat diagonal is the subset of $M^{\times n}$ of $n$-tuples for which (at least)
two entries are equal, that is, where
some collection of points comes together at a single point. The construction of $\cnm$ preserves information about the directions and relative rates of approach of each group of collapsing points. 

\newcommand{\otuple}[2]{\genfrac{[}{]}{0pt}{}{#1}{#2}}

\begin{definition}[\cite{newkey119} Definition 1.3]  \label{def:pijsijk}
Let $\otuple{n}{k}$ denote the number of ordered subsets of $k$ distinct elements of a set of size $n$. Given an ordered pair $(i,j)$ of $\{1,\dots,n\}$, let $\pij\co C_n(\R^m)\rightarrow S^{m-1}$ be the map that sends~$\bfp=(\bfp_1,\dots\bfp_n)$ to $\displaystyle\frac{\bfp_i-\bfp_j}{|\bfp_i-\bfp_j|}$, the unit vector in the direction of $\bfp_i-\bfp_j$.  Let $[0,\infty]$ be the one-point compactification of $[0,\infty)$.  Given an ordered triple $(i,j,k)$ of distinct elements in $\{1,\dots,n\}$, let $\sijk \co C_n(\R^m)\rightarrow [0,\infty]$ be the map which sends $\bfp$ to $\displaystyle\frac{|{\bfp_i}-{\bfp_j}|}{|\bfp_i-{\bfp_k}|}$. 
\end{definition}

To define configuration spaces for points in an arbitrary smooth ($C^\infty$) manifold $M$, we embed $M$ in~$\R^k$ so that $\cnmo$ is a subspace of $C_n(\R^k)$. We then compactify the space as follows:

\begin{definition} [\cite{newkey119} Definition 1.3] \label{def:config} Let $A_n[\R^k]$ be the product $(\R^k)^{n}\times (S^{k-1})^{\otuple{n}{2}} \times [0,\infty]^{\otuple{n}{3}}$.  Define $C_n[\R^k]$ to be the closure of the image of $C_n(\R^k)$ under the map
$$\alpha_n= \iota \times (\pij) \times (\sijk) \co C_n(\R^k)\rightarrow A_n[\R^k].$$
 If $M$ is smoothly embedded in $\R^k$, then $\cnmo$ is smoothly embedded in $C_n(\R^k)$ and we define $\cnm$ to be the closure of $\alpha_n(\cnmo)$ in $A_n[\R^k]$. In this case, we will refer to $A_n[\R^k]$ as $A_n[M]$ for convenience;  we denote the boundary of $\cnm$ by $\bdry\cnm=\cnm\setminus\cnmo$.
\end{definition}

We now summarize some of the important features of this construction, including the fact that $\cnm$ does not depend on the choice of embedding of $M$ in $\R^k$.
\begin{theorem}{\rm \label{thm:config}[cf.\cite{newkey119},~\cite{newkey118} Theorem 2.3]}
\begin{itemize}
\item $\cnm$ is a manifold with boundary and corners with interior $\cnmo$ having the same homotopy type as~$\cnm$. The topological type of $\cnm$ is independent of the embedding of $M$ in $\R^k$, and $\cnm$ is compact if $M$ is.
\item The inclusion of $\cnmo$ in $M^n$ extends to a surjective map  fron $\cnm$ to $M^n$ which is a homeomorphism over points in $\cnmo$. 
\end{itemize}
\end{theorem}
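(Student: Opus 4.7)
The plan is to establish the theorem by analyzing local structure near boundary strata of $\cnm$ and ultimately identifying the construction with an intrinsic iterated blow-up of $M^n$ along its diagonals. I would first treat the case $M = \R^k$ to set up explicit local charts in $A_n[\R^k]$, then pull the structure back to arbitrary $M$ via the inclusion $\cnmo \hookrightarrow \cnro$ induced by the embedding. Points of $\bdry\cnm = \cnm \setm \cnmo$ are precisely limits of genuine configurations where groups of points collide, and the $\pij$ and $\sijk$ factors in the definition of $\alpha_n$ are designed to record exactly the directions of approach and the relative rates at which various groups coalesce. A typical boundary point is indexed combinatorially by a nested family of subsets $S_1 \subsetneq \cdots \subsetneq S_r \subseteq \{1,\dots,n\}$ specifying which groups of points collapse together at which relative scales.

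For the manifold-with-corners structure, the key technical step is producing a local chart near such a boundary point. Given a limit configuration with collision pattern $S_1 \subsetneq \cdots \subsetneq S_r$, I would build coordinates out of a base configuration in $\cnmo$ with each $S_j$ contracted to a single point, an internal ``screen'' configuration (an element of a smaller $C_{\cdot}[\R^k]$ taken up to translation and scaling) for each level of the chain, and $r$ non-negative scale parameters $t_1,\dots,t_r$ serving as collar coordinates normal to the corresponding boundary faces. Checking that the images of $\alpha_n$ near a boundary point are cut out by the conditions $t_j \geq 0$ shows that $\cnm$ is a manifold with corners whose interior is $\cnmo$. Smoothly pushing every $t_j$ toward a positive value yields a deformation retraction of $\cnm$ onto $\cnmo$, giving the claimed homotopy equivalence. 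Compactness when $M$ is compact then follows because, restricted to a bounded region, $A_n[\R^k]$ is a product of compact factors (a bounded closed region of $(\R^k)^n$, compact spheres, and the compact $[0,\infty]$).

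The main obstacle is independence of the embedding, since $\alpha_n$ depends on $\phi\co M \hookrightarrow \R^k$ explicitly. I would resolve this by giving an intrinsic description: $\cnm$ is canonically homeomorphic to the iterated spherical blow-up of $M^n$ along its collection of partial diagonals (the Fulton--MacPherson / Axelrod--Singer compactification), which depends only on the smooth structure of $M$. Concretely, the differentials at colliding points identify the $S^{k-1}$-valued $\pij$ data, modulo the action of $d\phi$, with intrinsic directional data, and likewise for the $\sijk$ ratios; thus two embeddings produce naturally homeomorphic compactifications, with the homeomorphism built chart-by-chart from the charts described above. Finally, the map $\cnm \to M^n$ is the restriction of the projection $A_n[\R^k] \to (\R^k)^n$: it is surjective because $\cnmo$ is dense in $M^n$ and $\cnm$ is the closure of $\alpha_n(\cnmo)$, and it is tautologically a homeomorphism on $\cnmo$ because there the $\pij$ and $\sijk$ coordinates are determined by the position coordinates themselves.
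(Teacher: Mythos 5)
The paper does not prove this statement: it is quoted as background, with the proof deferred entirely to the cited references (Sinha's construction of manifold-theoretic compactifications and Theorem~2.3 of Budney--Conant--Scannell--Sinha), so there is no internal argument to compare yours against. Your outline is a faithful sketch of the standard proof in those sources: the stratification by nested collision sets, local charts built from a base configuration, screens modulo translation and scaling, and non-negative scale parameters $t_1,\dots,t_r$ as corner coordinates; the deformation retraction onto the interior by pushing the $t_j$ positive; compactness from the closedness of $\cnm$ inside a product of compact factors; embedding-independence via the identification with the intrinsic iterated spherical blow-up of $M^n$ along the partial diagonals; and the projection to $M^n$ as the restriction of the projection of $A_n[\R^k]$ onto its first factor. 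The one place where your sketch leans hardest on unproved assertions is the claim that the closure of $\alpha_n(\cnmo)$ near a boundary point is \emph{exactly} parametrized by your charts (i.e., that no extra limit points arise and that the chart maps are homeomorphisms onto neighborhoods); this is the technical heart of Sinha's proof and would need the full local analysis to be carried out. A second small point: surjectivity of $\cnm \to M^n$ from density of $\cnmo$ requires the projection to be a closed map, which follows here because the fibers of $A_n[\R^k] \to (\R^k)^n$ over bounded sets are compact, but is worth stating. As a blueprint, though, your approach is exactly the one the cited literature uses.
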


\begin{remark}\label{rmark:notation}
When discussing points in $\cnr$ or $\cnm$, it is easy to become confused. We pause to clarify notation.
\begin{itemize}
\item A point in $\R^k$ is denoted by $\bfx=(x_1, \dots, x_k)$, where each $x_i\in\R$.
\item Points in $(\R^k)^n$ are also denoted by $\bfx$, where $\bfx = (\bfx_1, \dots, \bfx_n)$ and each $\bfx_i\in\R^k$. (It will be clear from context which is meant.)
\item A point in $\cnr$ or $\cnm$, is denoted $\x$. 
\item At times, we will need to distinguish between the various entries of $\x\in\cnr$ or $\cnm$. 
In general,  
$$\x=(\bfx,(\pij)(\bfx), (\sijk)(\bfx)) =  (\bfx, \alpha(\bfx)),$$ where $\bfx=(\bfx_1,\dots,\bfx_n)\in (\R^k)^n$, and $ \alpha(\bfx) = ((\pij)(\bfx), (\sijk)(\bfx))$ gives the corresponding set of values in $(S^{k-1})^{\otuple{n}{2}}$ and $[0,\infty]^{\otuple{n}{3}}$.
 \end{itemize}
\end{remark}

The space $\cnm$ may be viewed as a polytope with a combinatorial structure based on the different ways groups of points in $M$ can come together. This structure defines a stratification of $\cnm$ into a collection of closed faces of various dimensions whose intersections are members of the collection. We will need to understand a bit of the structure of this collection, which is referred to as a \emph{stratification} of $\cnm$.
\begin{definition}[\cite{newkey118} Definition 2.4] A \emph{\paren} $\mathcal{P}$ of a set $T$ is an unordered collection $\{A_i\}$ of subsets of $T$ such that each subset contains at least 2 elements and two subsets are either disjoint or one is contained in the other. A \paren is denoted by a nested listing of the $A_i$ using parentheses. Let ${\bf Pa}(T)$ denote the set of \parens of $T$, and define an ordering on it by $\mathcal{P}\leq\mathcal{P}'$ if $\mathcal{P}\subseteq\mathcal{P}'$.
\end{definition}
For example, for $T=\{1,2,3,4\}$, $(12)(34)$ represents a \paren whose subsets are $\{1,2\}$ and $\{3,4\}$ while $((12)34)$ represents a \paren whose subsets are $\{1,2\}$ and $\{1,2,3,4\}$. 

We identify each parenthesization $\mathcal{P} = \{A_1, \dots, A_l\}$ of $\{1, \ldots, n\}$ with a closed subset $S_\mathcal{P}$ of $\bdy \cnm$ in our stratification of $\cnm$. The idea is that all the points in each $A_x$ collapse together, but if $A_x \subset A_y$, then the points in $A_x$ collapse ``faster'' than the points in $A_y$. Formally, this becomes the following condition: Let $\p = ((\bfp_1\dots,\bfp_n),(\pij)(\bfp), (\sijk)(\bfp))$ be a point in $A_n[M]$. Then $\p \in S_\mathcal{P}$ if 
\begin{itemize}
\item $\bfp_i = \bfp_j$ if and only if $i, j \in A_x$ for some $x$.
\item $s_{ijk} = 0$ (and hence $s_{ikj} = \infty$) if and only if $A_x \subset A_y$, $i, j \in A_x$ and $k \in A_y$. 
\end{itemize}
Sinha proves that a stratum $S_\mathcal{P}$ described by nested subsets $\{A_1,\dots, A_i\}$ has codimension~$i$ in $\cnm$. In the previous example $(12)$ has codimension~1, while $((12)34)$ and $(12)(34)$ have codimension~2. 

We notice that the definition of the $S_{\mathcal{P}}$ does not depend on the $\pi_{ij}$. In fact, for connected manifolds of dimension at least $2$, the combinatorial structure of the strata of $\cnm$ depends only on the number of points. Regardless of dimension, this construction and division of $\bdy \cnm$ into strata is functorial in the sense that

\begin{theorem}[\cite{newkey119}]\label{thm:functor}
An embedding $f\co  M\rightarrow N$ induces an embedding of manifolds with corners called the evaluation map $C_n[f] \co C_n[M]\rightarrow C_n[N]$ that respects the stratifications. 
\end{theorem}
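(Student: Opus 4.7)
The plan is to build $\cnf$ in two stages: first as a smooth embedding on the open interiors, then extend to the compactifications by analyzing boundary limits in the explicit coordinates of \defn{config}.

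On the interior, set $\cnf := f^{\times n}|_{\cnmo}$. Because $f$ is an injective immersion, $f^{\times n}$ preserves the no-collision condition, so this is a smooth embedding $\cnmo \to C_n(N)$. To extend, take a sequence $\bfx^{(m)} \in \cnmo$ converging to $\x \in \bdry\cnm$ in some stratum $S_\mathcal{P}$, and analyze the limits of $\pij(f(\bfx^{(m)}))$ and $\sijk(f(\bfx^{(m)}))$. Whenever $i,j$ lie in a common subset of $\mathcal{P}$, so that $\bfx_i^{(m)}$ and $\bfx_j^{(m)}$ collapse to a common point $p \in M$, Taylor's theorem gives
$$f(\bfx_i^{(m)}) - f(\bfx_j^{(m)}) = Df_p\bigl(\bfx_i^{(m)} - \bfx_j^{(m)}\bigr) + O\bigl(|\bfx_i^{(m)} - \bfx_j^{(m)}|^2\bigr).$$
Since $Df_p$ is injective, it follows that $\pij(f(\bfx^{(m)})) \to Df_p(v_{ij})/|Df_p(v_{ij})|$, where $v_{ij} = \lim \pij(\bfx^{(m)})$. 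An analogous estimate shows $\sijk(f(\bfx^{(m)})) \to (|Df_p(v_{ij})|/|Df_p(v_{ik})|)\, \sijk(\x)$ when $i,j,k$ collapse together, while the cases producing $0$ or $\infty$ from nested collapses are preserved tautologically. These limits assemble into a continuous extension $\cnf \co \cnm \to C_n[N]$. Because $Df_p$ is a linear isomorphism onto its image, preservation of the zero/infinity patterns in $\sijk$ together with injectivity of $f$ implies that $\cnf$ carries each stratum $S_\mathcal{P}$ of $\cnm$ into the corresponding stratum of $C_n[N]$.

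To upgrade continuity to a smooth embedding of manifolds with corners, work in Sinha's local blow-up coordinates for $\cnm$ near each stratum, in which a neighborhood of a boundary point is parametrized by a macroscopic point of $M^{\times n}$ together with iterated rescaled tangential data in unit spheres and intervals $[0,\infty]$. In these coordinates, $\cnf$ acts by $f$ on the macroscopic data and by the normalized derivative $v \mapsto Df_p(v)/|Df_p(v)|$ (with matching rescalings on the $[0,\infty]$ factors) on the tangential data; both operations are smooth and injective. The main obstacle is precisely this last assertion: since the extension formula involves $Df$, one must check carefully in the blow-up charts that the map is not merely continuous but $C^\infty$ with injective differential at each boundary point. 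Interior smoothness and stratum-preservation then follow directly from the limit computations above, so this local analysis at the boundary is the only substantive point.
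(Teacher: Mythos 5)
This theorem carries no proof in the paper: it is quoted from Sinha~\cite{newkey119}, so there is no internal argument to compare against. Judged on its own terms, your outline follows the standard construction and your limit computations are right: on a stratum where $i,j,k$ collapse to $p$, one does get $\pij(f(\bfx^{(m)}))\to Df_p(v_{ij})/|Df_p(v_{ij})|$ and $\sijk(f(\bfx^{(m)}))\to \sijk(\x)\cdot|Df_p(v_{ij})|/|Df_p(v_{ik})|$, and these formulas do determine a well-defined, injective, stratum-preserving extension (injectivity on the $\sijk$ coordinates needs the small extra remark that the correction factor $|Df_p(v_{ij})|/|Df_p(v_{ik})|$ is itself recoverable from the already-determined $v$'s). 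The problem is your final paragraph: you identify the verification that $\cnf$ is $C^\infty$ with injective differential in the corner charts as ``the only substantive point'' and then do not carry it out. That step is the entire content of the claim that $\cnf$ is an \emph{embedding of manifolds with corners} rather than merely a continuous stratum-preserving injection; a pointwise limit formula for $\pij$ and $\sijk$ does not by itself control derivatives transverse to the boundary faces, and the Taylor expansion you invoke degenerates exactly where the blow-up coordinates are needed. As written, the proposal is a correct reduction of the theorem to its hardest step, not a proof of it.

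It is worth noting that the paper's own definitions offer a shorter route that avoids the deferred analysis entirely. By Definition~\ref{def:config}, $\cnm$ is the closure of $\alpha_n(\cnmo)$ in $A_n[\R^k]$ after choosing an embedding $M\subset\R^k$; if one embeds $N$ in $\R^k$ and uses $f$ to embed $M$ inside $N\subset\R^k$, then $C_n(f(M))\subset C_n(N)$ as subsets of $C_n(\R^k)$, so their closures in the same ambient $A_n[\R^k]$ satisfy $C_n[f(M)]\subset C_n[N]$, and the stratification conditions (which are stated purely in terms of coincidences of the $\bfp_i$ and vanishing of the $\sijk$) are inherited verbatim. Combined with the independence-of-embedding statement in Theorem~\ref{thm:config}, this identifies $\cnm$ with a submanifold-with-corners of $C_n[N]$ without ever differentiating through a collision. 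Your chart-based approach is more general in spirit, but it is precisely the version that requires the boundary regularity you have left unproved.
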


\begin{corollary}\label{cor:smooth}
Let $f\co\R^k\rightarrow \R^k$ be a smooth diffeomorphism. Then the induced map of configuration spaces $\cnf\co
\cnr\rightarrow\cnr$ is also a smooth diffeomorphism (on each face of $\cnr$).
\end{corollary}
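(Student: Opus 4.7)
The plan is to deduce this directly from \thm{functor} together with functoriality of the $\cnf$ construction. Since $f\co\R^k\to\R^k$ is a diffeomorphism, both $f$ and $f^{-1}$ are smooth embeddings of $\R^k$ into itself, so \thm{functor} applies to each and produces embeddings of manifolds with corners $\cnf,\, C_n[f^{-1}]\co\cnr\to\cnr$ that respect the stratifications. First I would verify that the construction is functorial on the open dense stratum $\cnro$: there $\cnf$ is nothing more than $(f\times\cdots\times f)$ post-composed with $\alpha_n$, so $C_n[g\circ f]=C_n[g]\circ \cnf$ holds visibly on $\cnro$. Because both sides are continuous on $\cnr$ and agree on a dense set, they agree everywhere. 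Applying this to the pair $f,f^{-1}$ gives $\cnf\circ C_n[f^{-1}]=C_n[\id]=\id$ and similarly on the other side, so $\cnf$ is a bijection of $\cnr$ with inverse $C_n[f^{-1}]$.

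Next I would confirm smoothness on each face. On the interior this is automatic: the formulas $f(\bfp_i)$, $\pi_{ij}(f(\bfp))=(f(\bfp_i)-f(\bfp_j))/|f(\bfp_i)-f(\bfp_j)|$, and $s_{ijk}(f(\bfp))=|f(\bfp_i)-f(\bfp_j)|/|f(\bfp_i)-f(\bfp_k)|$ are manifestly smooth when the $\bfp_i$ are distinct. On a boundary stratum $S_\mathcal{P}$, points indexed by a block $A\in\mathcal{P}$ collapse to a common point $\bfp$, and the induced map on the $\pi_{ij}$-factors for $i,j\in A$ is the unit-normalization $v\mapsto Df(\bfp)\cdot v/|Df(\bfp)\cdot v|$, which is a smooth map of the sphere factor because $Df(\bfp)$ is invertible; similarly the $s_{ijk}$-factors transform by smooth ratios of $|Df(\bfp)\cdot v|$'s. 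This is exactly the content of the ``embedding of manifolds with corners'' conclusion in \thm{functor}, but one could also verify it by Taylor expanding $f$ around each cluster point and reading off the effect on the $(\pij,\sijk)$ coordinates.

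Putting these pieces together, $\cnf$ is a smooth bijection whose stratum-preserving inverse $C_n[f^{-1}]$ is also smooth on each face, so $\cnf$ restricts to a diffeomorphism of each face onto its image. The main technical point — and the only place where one really uses that $f$ is a diffeomorphism rather than a mere embedding — is the invertibility of $Df$, which guarantees that the induced maps on the $S^{k-1}$ and $[0,\infty]$ factors are not just smooth but invertible, so that strata are sent to strata of the same type. Everything else is formal from \thm{functor} and functoriality.
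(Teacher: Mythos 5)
Your argument is correct and follows the same route as the paper, which simply declares the corollary ``immediate'' from \thm{functor}; you have unpacked exactly why it is immediate, namely by applying the theorem to both $f$ and $f^{-1}$ and using functoriality on the dense open stratum to see that $C_n[f^{-1}]$ is a two-sided inverse. The extra detail on how $Df$ acts on the $\pij$ and $\sijk$ coordinates of the boundary strata is a useful elaboration but not a departure from the paper's approach.
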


\begin{proof} This is an immediate corollary of 
the previous theorem.
\end{proof}

Any pair $\bfp$, $\bfq$ of disjoint points in $\R^k$ has a direction $(\bfp-\bfq)/\norm{\bfp-\bfq}$ associated to it, while every triple of disjoint points $\bfp$, $\bfq$, $\bfr$ has a corresponding distance ratio $\norm{\bfp-\bfq}/\norm{\bfp-\bfr}$. One way to think of the purpose of $\cnm$ is that it extends the definition of these directions and ratios to the boundary.

\begin{theorem} [\cite{newkey119} or \cite{newkey118} Theorem 2.3] \label{thm:configgeom} 
Given $M \subset \R^k$, in any configuration of points $\p \in \cnm$ each pair of points $\bfp_i$, $\bfp_j$ has associated to it a well-defined unit vector in $\R^k$ giving the direction from $\bfp_i$ to $\bfp_j$. If the pair of points project to the same point $\bfp$ of $M$, this vector lies in $T_\bfp M$. 

Similarly, each triple of points $\bfp_i$, $\bfp_j$, $\bfp_k$ has associated to it a well-defined scalar in $[0,\infty]$ corresponding to the ratio of the distances $\norm{\bfp_i - \bfp_j}$ and $\norm{\bfp_i - \bfp_k}$. If any pair of $\{\bfp_i, \bfp_j,\bfp_k\}$ projects to the same point in $M$ (or all three do), this ratio is a limiting ratio of distances.

The functions $\pi_{ij}$ and $s_{ijk}$ are continuous on all of $\cnm$ and smooth on each face of $\bdy\cnm$.
\end{theorem}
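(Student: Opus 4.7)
The plan is to exploit the fact that, by \defn{config}, $\cnm$ sits by construction inside the ambient product
\[ A_n[\R^k]=(\R^k)^n\times(S^{k-1})^{\otuple{n}{2}}\times[0,\infty]^{\otuple{n}{3}}, \]
and that the maps $\pi_{ij}$ and $s_{ijk}$ are nothing but the coordinate projections onto the $(i,j)$-th $S^{k-1}$ factor and the $(i,j,k)$-th $[0,\infty]$ factor. Continuity on all of $\cnm$ is then automatic: the relevant projections are continuous on the ambient product, and $\cnm$ is a closed subspace.

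Next I would unpack the geometric content of these coordinates on the boundary. On the open stratum $\cnmo$, the definitions in \defn{pijsijk} give $\pi_{ij}(\bfp)=(\bfp_i-\bfp_j)/|\bfp_i-\bfp_j|$ and $s_{ijk}(\bfp)=|\bfp_i-\bfp_j|/|\bfp_i-\bfp_k|$. Since $\cnm$ is defined as the closure in $A_n[\R^k]$ of the image of $\cnmo$ under $\alpha_n$, every point $\p\in\bdry\cnm$ arises as a limit $\lim_\ell\alpha_n(\bfp^{(\ell)})$ for some sequence $\bfp^{(\ell)}\in\cnmo$. The projected value $\pi_{ij}(\p)$ is therefore the limit of the secant unit vectors along the approximating configurations, and $s_{ijk}(\p)$ is the limit of the corresponding distance ratios. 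This delivers the well-defined unit vector in $S^{k-1}$ and scalar in $[0,\infty]$ with the claimed interpretations.

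I expect the one genuinely geometric obstacle to be the tangent-space claim: if $\bfp_i$ and $\bfp_j$ collapse to the same point $\bfp\in M$, the limit unit vector must actually lie in $T_\bfp M$. To see this I would pick a smooth local parametrization $\phi\co U\subset T_\bfp M\to M$ with $\phi(0)=\bfp$ and $d\phi_0=\mathrm{Id}$, then write $\bfp_i^{(\ell)}=\phi(\bfv_i^{(\ell)})=\bfp+\bfv_i^{(\ell)}+O(|\bfv_i^{(\ell)}|^2)$ and analogously for $\bfp_j^{(\ell)}$, where $\bfv_i^{(\ell)},\bfv_j^{(\ell)}\in T_\bfp M$ tend to $0$. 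Then
\[ \bfp_i^{(\ell)}-\bfp_j^{(\ell)}=(\bfv_i^{(\ell)}-\bfv_j^{(\ell)})+O\!\left(|\bfv_i^{(\ell)}|^2+|\bfv_j^{(\ell)}|^2\right), \]
and since the first-order term and the denominator $|\bfp_i^{(\ell)}-\bfp_j^{(\ell)}|$ are comparable, the quadratic error divided by that denominator tends to $0$. After passing to a subsequence we therefore obtain a unit vector in $T_\bfp M$ as the limit, and by continuity of $\pi_{ij}$ this must equal $\pi_{ij}(\p)$ itself. A strictly analogous Taylor expansion, together with the fact that the compactification records relative rates of collapse through the $[0,\infty]$ coordinates, handles the corresponding claim about $s_{ijk}$ as a limiting ratio of distances.

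Finally, for smoothness on each face of $\bdry\cnm$, I would combine two observations. First, each stratum $S_\mathcal{P}$ is a smooth submanifold of $A_n[\R^k]$ once we equip $[0,\infty]$ with its natural smooth atlas consisting of the charts $t\mapsto t$ on $[0,\infty)$ and $t\mapsto 1/t$ on $(0,\infty]$; this is part of what Sinha's construction in \cite{newkey119} and \cite{newkey118} establishes. Second, relative to that atlas, the coordinate projections $\pi_{ij}$ and $s_{ijk}$ are smooth on the whole ambient product $A_n[\R^k]$. Smoothness of their restrictions to each face then follows by restricting a smooth map to a smooth submanifold.
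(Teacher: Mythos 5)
The paper does not actually prove this statement; it is quoted from Sinha (\cite{newkey119}, and Theorem~2.3 of \cite{newkey118}), so there is no internal argument to compare yours against. Your outline follows what is essentially the standard route: continuity is immediate because $\pi_{ij}$ and $s_{ijk}$ are restrictions to the closed subset $\cnm\subset A_n[\R^k]$ of coordinate projections of the ambient product, boundary values are limits of secant directions and distance ratios because $\cnm$ is defined as a closure in a metrizable space, and smoothness on strata reduces to Sinha's structure theorem together with smoothness of the projections in the $t\mapsto 1/t$ chart on $[0,\infty]$. All of that is fine.

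The one step that does not work as written is the error estimate in the tangency argument. From $\bfp_i^{(\ell)}-\bfp_j^{(\ell)}=(\bfv_i^{(\ell)}-\bfv_j^{(\ell)})+O(|\bfv_i^{(\ell)}|^2+|\bfv_j^{(\ell)}|^2)$ you cannot conclude that the error divided by $\norm{\bfp_i^{(\ell)}-\bfp_j^{(\ell)}}$ tends to zero: the two parameter values may approach each other far faster than they approach the origin, e.g.\ $\bfv_i^{(\ell)}=\tfrac{1}{\ell}e_1$ and $\bfv_j^{(\ell)}=\tfrac{1}{\ell}e_1+\ell^{-10}e_2$ make the quotient $(|\bfv_i^{(\ell)}|^2+|\bfv_j^{(\ell)}|^2)/|\bfv_i^{(\ell)}-\bfv_j^{(\ell)}|$ blow up. (Moreover the comparability of $\norm{\bfp_i^{(\ell)}-\bfp_j^{(\ell)}}$ with $|\bfv_i^{(\ell)}-\bfv_j^{(\ell)}|$, which you also invoke, is not yet justified by this expansion.) The repair is to estimate the difference rather than the two remainders separately: apply the mean value inequality to $\psi=\phi-\mathrm{id}$, whose differential satisfies $\lVert d\psi_{\bfv}\rVert=O(|\bfv|)$ near $0$ because $d\psi_0=0$, to obtain $\bigl|\phi(\bfv_i)-\phi(\bfv_j)-(\bfv_i-\bfv_j)\bigr|\le\sup_{\bfv}\lVert d\psi_{\bfv}\rVert\cdot|\bfv_i-\bfv_j|=o(|\bfv_i-\bfv_j|)$. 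This single estimate yields both the comparability of numerator and denominator and the vanishing of the relative error, after which your argument (pass to a subsequence, conclude the limiting direction is a unit vector in $T_\bfp M$, identify it with $\pi_{ij}(\p)$ by continuity) goes through, and the same refinement should be used for the corresponding claim about $s_{ijk}$.
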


\section{Special Submanifolds of Configuration Spaces}

We are interested in three special submanifolds of particular configurations defined by geometric constraints. First, we consider the configuration space of points on a curve. 
\begin{definition}
Let $\gamma$  be a $C^\infty$-smooth embedding of $S^1$ in $\R^k$, with $\cng\co\cns \rightarrow \cnr$  the evaluation map on configuration spaces. We abuse notation by using~$\gamma$ to mean either the embedding or its image in~$\R^k$. Similarly, we use $\cng$ to mean either the evaluation map or its image --- the compactified configuration space of $n$ points on the simple closed curve $\gamma(S^1)\in \R^k$. 
\end{definition}

By \thm{functor} we know that  $\cng$ is a submanifold of $\cnr$ and $\bdry\cng\subseteq \bdry\cnr$ with the stratifications respected.  The coordinates for $\cng$ are similar to those described in \thm{configgeom}, as they are the image of the coordinates under $\gamma\co  S^1\rightarrow \R^k$. Volic~\cite{MR2300426} and Budney {\it et al.}~\cite{newkey118} have detailed descriptions of the coordinates for codimension 1 strata. To give an example, observe that the map $C_n[\gamma]$ takes $(\bfp_1,\dots,\bfp_n)\in\cnso$  to $(\gamma(\bfp_1),\dots,\gamma(\bfp_n))\in \cnr$. If we consider the stratum where say $\bfp_1$, $\bfp_2$ and $\bfp_3$ degenerate to a point $\q$ in $\cns$, then $\q$ is a configuration of $n-3+1=n-2$ points plus the $\pij$ and $\sijk$ information for $\bfp_1$, $\bfp_2$ and $\bfp_3$. In $\cnr$ we get a configuration of $n-2$ points on  $\gamma$ plus the directions of approach of the colliding~$\gamma(\bfp_i)$ and the relative distances $s_{123}$, $s_{312}$, and so forth. The $\pi_{ij}$ are unit tangent vectors to $\gamma$. If $\bfp_1$ and $\bfp_3$ approach $\bfp_2$ equally from opposite sides, then in the limit $\norm{\bfp_1 - \bfp_2} + \norm{\bfp_2 - \bfp_3} = \norm{\bfp_1 - \bfp_3}$, so the $s_{ijk}$ obey the relations
\begin{equation*}
1 + s_{231} = s_{132}, \quad s_{213} + 1 = s_{312}, \quad s_{123} + s_{321} = 1.
\end{equation*}

In $\cns$ the values of $\pij$ are in $S^0$ and are mapped to $S^1$ by $\cng$.
Thus, while the exact values of the unit tangent vectors $\pij$ and $\pi_{ji}$ are unknown for two colliding points on $\gamma$, they must differ by $\pi$. 

In the case of the circle, the cyclic ordering of points along $S^1$ determines $(n-1)!$ connected components of $C_n[S^1]$. Note that some strata are empty in the boundary of each connected component of $C_n[S^1]$. For instance, in the component of $\cfs$ where points $\bfp_1$, $\bfp_2$, $\bfp_3$ and $\bfp_4$ occur in order along $S^1$, if $\bfp_1$ and $\bfp_3$ come together, either $\bfp_2$ or $\bfp_4$ must collapse to the same point. Thus the stratum $(13)$ is empty on the boundary of this component. We will focus on one of these connected components:

\begin{definition} 
\label{def:ccng}
Let $\ccng$ denote the component of $\cng$ where the order of the points $\bfp_1, \dots, \bfp_n$ matches the cyclic order of these points along $\gamma$ according to the given parametrization of $\gamma$.
\end{definition}


We now consider another submanifold -- this one with a more interesting structure.

\begin{definition}
Let the subset of {\em square-like quadrilaterals} $\slq$ for $k=2$ be the subspace of squares in $\R^2$, and 
for $k > 2$, the subset of $\cfr$ where $s_{124} = s_{231} = s_{342} = 1$ and $s_{132} - s_{241} = 0$.  That is, $\slq$ is  the space of quadrilaterals in $\R^k$ with equal sides and equal diagonals. 
\label{def:slq}
\end{definition}

\begin{proposition}
\label{prop:slq is submanifold of r2}
The space $\slq \cap \cfro$ is an orientable submanifold of $\cfro$, and the (point-set) boundary of $\slq$ satisfies  
$\bdy\slq \subset \bdy \cfr$.
\end{proposition}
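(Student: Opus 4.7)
The plan divides into two parts: first, showing $\slq \cap \cfro$ is a smooth orientable submanifold of $\cfro$; second, controlling its closure in $\cfr$.

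For the submanifold structure when $k = 2$, I would argue by direct parametrization. A cyclically labeled square in $\R^2$ is uniquely determined by its center $c \in \R^2$, its side length $r \in (0, \infty)$, and the angle $\theta \in \R/2\pi\Z$ of its first edge; mapping $(c, r, \theta)$ to the corresponding vertex 4-tuple gives a smooth embedding of $\R^2 \times (0, \infty) \times S^1$ into $\cfrto$ whose image is exactly $\slq \cap \cfrto$. This realizes $\slq \cap \cfrto$ as an orientable smooth 4-manifold.

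For $k > 2$, I would apply the regular-value theorem to the smooth map $F \co \cfro \to \R^4$ with components $F_1 = |\bfp_1 - \bfp_2|^2 - |\bfp_2 - \bfp_3|^2$, $F_2 = |\bfp_2 - \bfp_3|^2 - |\bfp_3 - \bfp_4|^2$, $F_3 = |\bfp_3 - \bfp_4|^2 - |\bfp_4 - \bfp_1|^2$, and $F_4 = |\bfp_1 - \bfp_3|^2 - |\bfp_2 - \bfp_4|^2$ --- the squared-distance versions of the defining $s_{ijk}$ relations, so that $F^{-1}(0) = \slq \cap \cfro$. The crux, and the main obstacle, is to show that $0$ is a regular value of $F$: at every square-like configuration $\bfp$ the four gradients $\grad F_i \in (\R^k)^4$ must be linearly independent. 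I would verify this first at a distinguished representative --- for example, the planar square with vertices $(\pm 1, 0, 0, \dots, 0)$ and $(0, \pm 1, 0, \dots, 0)$ sitting inside $\R^k$ --- by a direct $4 \times 4$ minor computation, then transport the independence to all of $\slq \cap \cfro$ via the action of the similarity group $\R^k \rtimes (\Orth(k) \times (0, \infty))$, which preserves $\slq$ and acts equivariantly on the gradients. Rank is constant on each similarity orbit, so for $k \geq 3$ the verification reduces to one representative in each shape class (planar squares, regular tetrahedra, and the one-parameter families of skew configurations interpolating between them), each of which can be handled by the same type of direct computation. Orientability then follows because the normal bundle of $F^{-1}(0)$ inside $\cfro \subset (\R^k)^4$ is globally framed by the gradients $\grad F_1, \dots, \grad F_4$, and combining this framing with the natural orientation of $\cfro$ orients $\slq \cap \cfro$.

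The boundary claim $\bdy \slq \subset \bdy \cfr$ is then essentially immediate. Because the $s_{ijk}$ (equivalently the $F_i$) are continuous on $\cfro$ by \thm{configgeom}, the set $\slq \cap \cfro$ is closed in $\cfro$; hence every limit point of $\slq$ in $\cfr$ that is not already in $\slq \cap \cfro$ must lie in $\cfr \setminus \cfro = \bdy \cfr$.
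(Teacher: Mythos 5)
Your overall strategy coincides with the paper's: exhibit $\slq \cap \cfro$ as the preimage of a regular value of a smooth map built from the pairwise distances, get orientability from the resulting framing of the normal bundle inside the oriented open set $\cfro \subset (\R^k)^4$, and deduce the boundary statement from the fact that the zero set is closed in $\cfro$. The differences are in execution. The paper uses the single map $g = (s_{124}^2, s_{231}^2, s_{342}^2, s_{132}^2 - s_{241}^2)$ uniformly in $k$ (including $k=2$) and verifies surjectivity of $dg$ at an \emph{arbitrary} point of $\slq$ by computing the effect of moving one vertex at a time, obtaining explicit cosine formulas; your polynomial map $F$ cuts out the same set on $\cfro$ and is if anything cleaner to differentiate, and your reduction of the rank check to one representative per similarity orbit is legitimate (isometries preserve the $F_i$ and scalings rescale them, so the rank of $dF$ is constant on orbits). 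Two caveats. First, for $k \geq 3$ the similarity orbits in $\slq$ form a genuine one-parameter family, indexed by the diagonal-to-side ratio, which runs from the planar-square value $\sqrt{2}$ down toward the degenerate $(13)(24)$ limit; so the ``direct computation at a representative'' must still be carried out with a free parameter. That computation is precisely the content of the paper's proof and is the real substance of the proposition, so your argument is complete only once it is written out. Second, your $k=2$ parametrization by $(c,r,\theta)$ covers only one of the two components of the space of labeled squares: fixing the center, the side length, and the direction of the edge $\bfp_1\bfp_2$ still leaves the choice of which side of that edge the center lies on, i.e., whether $\bfp_1\bfp_2\bfp_3\bfp_4$ is labeled counterclockwise or clockwise. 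The image of your chart is therefore half of $\slq \cap \cfrto$; the fix is to take two disjoint copies of $\R^2 \times (0,\infty) \times S^1$, or simply to run the regular-value argument for $k=2$ as well, as the paper does. Neither caveat affects the truth of the statement or the viability of your approach.
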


\begin{proof}
Let $\p=((\bfp_1,\bfp_2,\bfp_3,\bfp_4),\alpha(\p))$ be a point  in $\cfr$, and consider the mapping $g\co \cfr \to \R^4$ given by
\begin{align*}
g(\p) &= (s_{124}^2, s_{231}^2, s_{342}^2, s_{132}^2 - s_{241}^2) \\
&= \left( \frac{|\bfp_1-\bfp_2|^2}{|\bfp_1 - \bfp_4|^2},\, \frac{|\bfp_2-\bfp_3|^2}{|\bfp_1 - \bfp_2|^2},\, \frac{|\bfp_3-\bfp_4|^2}{|\bfp_2 - \bfp_3|^2},\,
\frac{|\bfp_1-\bfp_3|^2}{|\bfp_1 - \bfp_2|^2} - \frac{|\bfp_2-\bfp_4|^2}{|\bfp_2 - \bfp_1|^2} \right).
\end{align*}
This mapping is smooth and $\slq$ is the preimage of the point $(1,1,1,0)$. We show
that 
$$dg:T_{\p}\cfro\rightarrow T_{g(\p)}\R^4$$ is onto at points $\p\in\slq$ by showing $dg$ has four linearly independent rows.  We denote a tangent vector at $\p$ by $\cv = \bfv(\p)=(\bfv_1,\bfv_2,\bfv_3,\bfv_4)$, where each $\bfv_i$ is a tangent vector at $\bfp_i$. (Here we suppress the $\alpha(\bfp)$ information on the strata.)

Let $\Delta \bfp_1$ denote a vector at $\bfp_1$ as in Figure~\ref{fig:transversalityvariation}, define $\cv_1 = (\Delta \bfp_1, 0,0,0)$ and consider
\begin{figure}[t]
\begin{overpic}{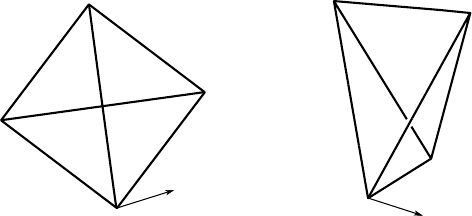}
\put(23,-1.5){$\bfp_1$}
\put(44,25){$\bfp_2$}
\put(19,46.5){$\bfp_3$}
\put(-5,19){$\bfp_4$}
\put(37,4){$\Delta \bfp_1$}

\put(90,-2){$\Delta \bfp_1$}
\put(75,1.5){$\bfp_1$}
\put(101,42){$\bfp_2$}
\put(92,10){$\bfp_3$}
\put(65.5,44.5){$\bfp_4$}

\end{overpic} 
\caption{This figure shows the general situation where a vertex $\bfa$ of a quadrilateral in $\slq$ is varied. On the left, we see the case in the plane, where every quadrilateral in $\slq$ is really a square. On the right, we see the general (space) case, where the quadrilaterals in $\slq$ form a class of special tetrahedra. We compute the corresponding variation of edgelengths, and of the values of the function which we use to define the space of square-like quadrilaterals, in the proof of Proposition~\ref{prop:slq is submanifold of r2}.}
\label{fig:transversalityvariation}
\end{figure}
$$dg_{\cv_1} (\bfp_1,\bfp_2,\bfp_3,\bfp_4) = \lim_{|\Delta \bfp_1| \to 0} \frac{g(\bfp_1 + \Delta \bfp_1, \bfp_2, \bfp_3, \bfp_4)
- g(\bfp_1\bf,\bfp_2,\bfp_3,\bfp_4)}{|\Delta \bfp_1|}.$$
To compute the limit, let us consider a typical quotient term involved:
\begin{multline*}
\frac{|\bfp_1 - \bfp_2 + \Delta \bfp_1|^2}{|\bfp_1 - \bfp_4 + \Delta \bfp_1|^2} - \frac{|\bfp_1-\bfp_2|^2}{|\bfp_1-\bfp_4|^2} \\
=\frac{|\bfp_1 - \bfp_2 + \Delta \bfp_1|^2 |\bfp_1-\bfp_4|^2 - |\bfp_1 - \bfp_4 + \Delta \bfp_1|^2 |\bfp_1-\bfp_2|^2}{|\bfp_1 - \bfp_4 + \Delta \bfp_1|^2 |\bfp_1-\bfp_4|^2} \\
= \frac{2 |\bfp_1 -\bfp_4 |^2 (\bfp_1-\bfp_2)\cdot \Delta \bfp_1 - 2 |\bfp_1 - \bfp_2|^2 (\bfp_1-\bfp_4)\cdot \Delta \bfp_1}{|\bfp_1 - \bfp_4 + \Delta \bfp_1|^2 |\bfp_1-\bfp_4|^2}\\
+ \frac{|\bfp_1-\bfp_4|^2 |\Delta \bfp_1|^2 - |\bfp_1-\bfp_2|^2|\Delta \bfp_1|^2}{|\bfp_1 - \bfp_4 + \Delta \bfp_1|^2 |\bfp_1-\bfp_4|^2}.
\end{multline*}
Next divide by $|\Delta \bfp_1|$. We can ignore terms in the numerator 
with $|\Delta \bfp_1|^2$ because they will vanish
in the limit. We rearrange to get:
$$2|\bfp_1 - \bfp_2| \frac{\ds\frac{\bfp_1-\bfp_2}{|\bfp_1-\bfp_2|} \cdot \frac{\Delta \bfp_1}{|\Delta \bfp_1|} - \frac{|\bfp_1-\bfp_2|}{|\bfp_1-\bfp_4|} \frac{\bfp_1-\bfp_4}{|\bfp_1-\bfp_4|}
\cdot \frac{\Delta \bfp_1}{|\Delta \bfp_1|}}{|\bfp_1-\bfp_4 + \Delta \bfp_1|^2}.$$
Taking the limit as $|\Delta \bfp_1| \to 0$, we get:
\begin{multline*}
\lim_{|\Delta \bfp_1| \to 0} \frac1{\Delta \bfp_1} \left(
\frac{|\bfp_1 - \bfp_2 + \Delta \bfp_1|^2}{|\bfp_1 - \bfp_4 + \Delta \bfp_1|^2} - \frac{|\bfp_1-\bfp_2|^2}{|\bfp_1-\bfp_4|^2}\right) =\\
\frac2\ell
(\cos \angle(\Delta \bfp_1, \bfp_1\bfp_2) - \cos \angle (\Delta \bfp_1, \bfp_1\bfp_4)),
\end{multline*}
where $\ell = |\bfp_1-\bfp_2| = |\bfp_2-\bfp_3| = |\bfp_3-\bfp_4| = |\bfp_1-\bfp_4|$, and $\angle(\Delta \bfp_1, \bfp_1\bfp_2)$ is the angle between vector $\Delta \bfp_1$ and the vector given by $\bfp_1\bfp_2 =\bfp_1 - \bfp_2$.

Similar computations give an explicit form to $d{g}$; suppose
$m = |\bfp_1-\bfp_3| = |\bfp_2 - \bfp_4|$. Then
\begin{multline*}
d{g}_{\cv_1}(\bfp_1,\bfp_2,\bfp_3,\bfp_4) = \frac{2}{\ell} \left( \cos \angle (\Delta \bfp_1, \bfp_1\bfp_2) - \cos
\angle (\Delta \bfp_1, \bfp_1\bfp_4),\right. \\
\left. -\cos \angle (\Delta \bfp_1, \bfp_1\bfp_2), 0, \frac{m}{\ell} \cos \angle (\Delta \bfp_1, \bfp_1\bfp_3) \right).
\end{multline*}
Since the angles made by $\Delta \bfp_1$ and the sides and diagonals of a given quadrilateral cannot be chosen so that 
all cosines involved vanish at once, $d{g}_{\cv_1}$ does not vanish on $\slq$.

Analogous variations $\cv_2, \cv_3$, and $\cv_4$ at $\bfp_2$, $\bfp_3$, and $\bfp_4$ respectively, lead to the following expressions:
\begin{align*}
d{g}_{\cv_2} &= \frac{2}{\ell} \left( \cos \angle (\Delta \bfp_2, \bfp_2\bfp_1), 
\cos \angle (\Delta \bfp_2, \bfp_2\bfp_3) - \cos \angle (\Delta \bfp_2, \bfp_2\bfp_1), \right.\\ & \hspace{2.225in} \left. -\cos \angle (\Delta \bfp_2, \bfp_2\bfp_3), -\frac{m}{\ell} \cos \angle (\Delta \bfp_2, \bfp_2\bfp_4)\right)\\
d{g}_{\cv_3} &= \frac{2}{\ell} \left(0, \cos \angle (\Delta \bfp_3, \bfp_3\bfp_2), \cos \angle (\Delta \bfp_3, \bfp_3\bfp_4) -\cos \angle (\Delta \bfp_3, \bfp_3\bfp_2), \right.\\
&\hspace{2.225in}\left.\frac{m}{\ell}\cos \angle (\Delta \bfp_3, \bfp_3\bfp_1)\right)\\
d{g}_{\cv_4} &= \frac{2}{\ell} \left(-\cos \angle (\Delta \bfp_4, \bfp_4\bfp_1), 0, \cos \angle (\Delta \bfp_4, \bfp_4\bfp_3), 
-\frac{m}{\ell} \cos\angle (\Delta \bfp_4, \bfp_4\bfp_2)\right).
\end{align*}
After some elementary row operations, one finds that
carefully chosen variations at $\bfp_1$, $\bfp_2$, $\bfp_3$, and $\bfp_4$ will give four linearly
independent vectors at points in $\slq$. It follows from the Preimage Theorem
of \cite{Guillemin:2010ti} that ${g} \transverse (1,1,1,0)$ and the
interior of $\slq$ is a submanifold of $C_4(\R^k)$.

The boundary points in $C_4[\R^k]$ are where the points of a configuration come together,
along with the directions of collision and ratios of the sides. There is no difficulty in the plane, where the ratios in the definition of ${g}$ may be smoothly extended to the boundary. The boundary $\partial \slq$ is contained in the $(1234)$ boundary face of $C_4[\R^2]$, and, in fact, the map ${g}$ is transverse to $(1,1,1,0)$ on this boundary. (For the sake of brevity we have omitted the details.) Thus in this special case, $\slq$ is actually a submanifold with boundary of $\cfrt$, the larger manifold with boundary.

The (pointset) boundary of $\slq$ in $\cfr$ contains both ``infinitesimal'' squares and configurations in the $(13)(24)$ face of $\cfr$, where the diagonals are equal to zero while the sidelengths remain equal and nonzero. Such collisions lead to square-like quadrilaterals that are four-fold covers of an interval. We may certainly extend the map ${g}$ to this face, but here we run into trouble: Since any configuration on the $(13)(24)$ has equal sidelengths and equal diagonals, the map ${g}$ is \emph{not} transverse to $(1,1,1,0)$ when restricted to this boundary face, and our argument does {\em not} show that $\slq$ is a submanifold with boundary of $\cfr$, the larger manifold with boundary. 
\end{proof}

We next state a useful corollary of these detailed computations. Recall (\cite{Guillemin:2010ti}) that if $f \co X \rightarrow Y$ is transverse to $Z \subset Y$ and $Z$ and $X$ are oriented, the orientation on $f^{-1}(Z)$ at $p \in X$ is constructed by appending a positively oriented basis for the ``horizontal'' subspace of $T_p X$ to a basis for the ``vertical'' subspace $T_p f^{-1}(Z)$. The vertical basis is considered positively oriented if the combined basis is a positively oriented basis for $X$. We will be interested later in the free and properly discontinuous action of $\Z/4\Z$ on $\cfr$ and on $\slq$ that cyclically permutes $\bfp_1$, $\bfp_2$, $\bfp_3$ and $\bfp_4$. Let $\mu \colon \cfr \rightarrow \cfr$ be the map corresponding to the generator of $\Z/4\Z$ for this action. It is clear from the definition of $\slq$ that $\mu$ descends to a map from $\slq$ to $\slq$.

\begin{proposition}
The map $\mu$ reverses orientation on both $\cfr$ and $\slq$ if $k$ is odd, and preserves orientation on both $\slq$ and $\cfr$ if $k$ is even.
\label{prop:orientation}
\end{proposition}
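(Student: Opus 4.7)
The plan is to split the calculation using the short exact sequence $0 \to T\slq \to T\cfr|_{\slq} \to N\slq \to 0$ and then compute the sign of $d\mu$ on $T\cfr$ directly and on $N\slq$ via the defining functions.

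For $\cfr$: the interior $\cfro$ is an open submanifold of $(\R^k)^4$, and $\mu$ is the restriction of the block-cyclic linear map $(\bfx_1, \bfx_2, \bfx_3, \bfx_4) \mapsto (\bfx_2, \bfx_3, \bfx_4, \bfx_1)$. As a block permutation matrix with $k \times k$ identity blocks arranged by the $4$-cycle $(1\,2\,3\,4)$, its determinant is $(\mathrm{sgn}(1\,2\,3\,4))^k = (-1)^k$. Since the orientation on the compactified $\cfr$ extends by continuity from the interior, this settles the claim for $\cfr$.

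For $\slq$: since $\mu$ preserves $\slq$, $d\mu$ descends to a map on the normal bundle and $\det(d\mu|_{T\cfr}) = \det(d\mu|_{T\slq}) \cdot \det(d\mu|_{N\slq})$, so it suffices to show the normal action has determinant $+1$. The normal bundle is naturally trivialized at each point of $\slq$ by $dg$, where $g_1 = s_{124}^2 - 1$, $g_2 = s_{231}^2 - 1$, $g_3 = s_{342}^2 - 1$, $g_4 = s_{132}^2 - s_{241}^2$ are the defining functions from \prop{slq is submanifold of r2}. I compute the pullbacks $\mu^* g_i$ modulo the square of the defining ideal of $\slq$. Direct substitution yields $\mu^* g_1 = g_2$ and $\mu^* g_2 = g_3$. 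Applying $\mu$ to $g_3$ produces $s_{413}^2 - 1$; using the telescoping identity $s_{124} s_{231} s_{342} s_{413} = 1$ (valid throughout $\cfro$) to eliminate $s_{413}$, one finds $\mu^* g_3 \equiv -(g_1 + g_2 + g_3) \pmod{g^2}$. For $g_4$, the identities $s_{243} = s_{241}/s_{231}$ and $s_{312} = s_{132}/s_{231}$ give $\mu^* g_4 = -g_4/(1 + g_2) \equiv -g_4 \pmod{g^2}$. Taking differentials at $\slq$, the matrix of $\mu^*$ on $N^*\slq$ in the basis $\{dg_1, dg_2, dg_3, dg_4\}$ becomes
\[
A = \begin{pmatrix} 0 & 0 & -1 & 0 \\ 1 & 0 & -1 & 0 \\ 0 & 1 & -1 & 0 \\ 0 & 0 & 0 & -1 \end{pmatrix},
\]
whose determinant is $+1$. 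Since the normal bundle action is dual to the conormal action and so has the same determinant, $\det(d\mu|_{N\slq}) = +1$, giving $\det(d\mu|_{T\slq}) = (-1)^k$.

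The main obstacle is the pullback of $g_3$: under cyclic shift $s_{342}$ becomes $s_{413}$, which is not among the defining functions of $\slq$. The telescoping identity $s_{124} s_{231} s_{342} s_{413} = 1$ is the essential tool that rescues the argument by expressing $s_{413}$ in terms of the three shape ratios already in our list; once this identity is available, passing to differentials and computing the determinant of $A$ is a short linear-algebra exercise, and the sign $+1$ ensures that the $\slq$-sign agrees with the $\cfr$-sign of $(-1)^k$.
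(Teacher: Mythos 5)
Your proposal is correct, and its overall skeleton matches the paper's: the sign of $\mu$ on $\cfr$ is $(-1)^k$ because the $4$-cycle is an odd permutation acting on $k$ identical blocks, and the sign on $\slq$ agrees with it because the induced action on the normal direction (trivialized by $dg$) has positive determinant. Where you genuinely depart from the paper is in how that normal sign is computed. The paper establishes it by explicitly constructing complementary bases of tangent vectors at a configuration of $\slq$ (one set of variations for planar squares, a different set for non-planar square-like tetrahedra), pushing them through $d\mu$, and evaluating two $4\times 4$ determinants of $Dg$ entry by entry; this forces a case split and leans on the geometric constructions from the proof of Proposition~\ref{prop:slq is submanifold of r2}. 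You instead compute the pullback action on the conormal bundle purely algebraically from $\mu^* s_{ijk} = s_{(i+1)(j+1)(k+1)}$, the telescoping identity $s_{124}s_{231}s_{342}s_{413}=1$, and the quotient identities $s_{243}=s_{241}/s_{231}$, $s_{312}=s_{132}/s_{231}$; I checked that $\mu^*g_3\equiv -(g_1+g_2+g_3)$ and $\mu^*g_4\equiv -g_4$ modulo the square of the ideal, that $\det A=+1$, and that passing from the conormal to the normal action preserves the determinant, so the argument is sound. What your route buys is uniformity: it works for every $k$ and every point of $\slq\cap\cfro$ at once, with no planar/non-planar dichotomy and no numerical determinants of $Dg$. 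The only cosmetic caveat is that for $k=2$ the paper defines $\slq$ as the set of planar squares rather than as $g^{-1}(1,1,1,0)$, but since the paper itself identifies the two in the proof of Proposition~\ref{prop:slq is submanifold of r2} and orientation is a local question on the interior, your argument applies verbatim.
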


\begin{figure}[th]
\hfill
\begin{minipage}[b]{1.5in}
\begin{overpic}[height=1.5in]{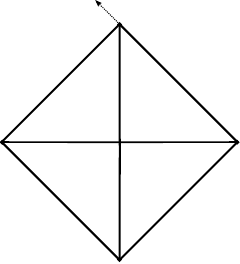}
\put(50,-2){$\bfp_1$}
\put(92,43){$\bfp_2$}
\put(49,93){$\bfp_3$}
\put(-9,43){$\bfp_4$}
\end{overpic} \\

\vspace{0.12in}
\end{minipage}
\hfill
\begin{overpic}[height=1.65in]{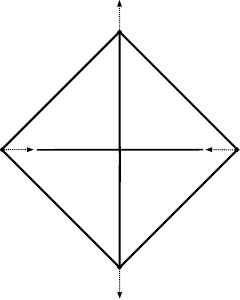}
\put(42,91){$\bfp_3$}
\put(42,8){$\bfp_1$}
\put(81,46.5){$\bfp_2$}
\put(-8,46.5){$\bfp_4$}
\end{overpic}
\hfill
\hphantom{.}

\caption{Two tangent vectors to a configuration $\p$ in $\slq \subset \cfr$ which forms a planar square. For the tangent vector shown at left, the directional derivatives of $\norm{\bfp_1 - \bfp_3}$ and $\norm{\bfp_2 - \bfp_3}$ are positive while the directional derivatives of all other lengths shown vanish. Clearly, we may construct a similar tangent vector at each vertex to increase any given edgelength and corresponding diagonal length while leaving all other lengths unchanged to first order. On the right, we see a tangent vector where the directional derivative of $\norm{\bfp_1 - \bfp_3}$ is positive, the directional derivative of $\norm{\bfp_2 - \bfp_4}$ is negative, and the directional derivatives of all other lengths vanish.
\label{fig:slqmotions}}
\end{figure}

\begin{proof}
 We first note that $T_{\p}\slq\subset T_{\p}\cfr$, and recall that a tangent vector at $\p$ is denoted by $\cv = \bfv(\p)=(\bfv_1,\bfv_2,\bfv_3,\bfv_4)$, where $\bfv_i$ is a tangent vector at $\bfp_i$. 

To prove the proposition, we now construct some specific variations of quadrilaterals in $\slq$ that will behave nicely under the $\Z/4\Z$ action. For squares in the plane, Figure~\ref{fig:slqmotions} shows the construction of two types of tangent vectors to $\cfr$ at $\p$. The first three tangent vectors are of the form $\cu = (0,\bfu_2,0,0)$, $\cv = (0,0,\bfv_3,0)$ and $\cw = (0,0,0,\bfw_4)$. Note $\cv$ is shown at the left in the figure and $\bfv_3$ is perpendicular to ${\bfp_3\bfp_4}$. Assume that $\p\in\slq$ has $l=|\bfp_2-\bfp_1|=|\bfp_3-\bfp_2|=|\bfp_4-\bfp_3|=|\bfp_1-\bfp_4|$ and $l/\sqrt{2}=|\bfp_1-\bfp_3|=|\bfp_2-\bfp_4|$. As shown in the figure, we can arrange to have 
\begin{align*}
D_{\cu} \norm{\bfp_1 - \bfp_2} &= +\ell/2, \quad  D_{\cu} \norm{\bfp_2 - \bfp_4} = \ell/2\sqrt{2}, \quad \text{other directional derivs of lengths } = 0, \\
D_{\cv} \norm{\bfp_2 - \bfp_3} &= +\ell/2, \quad  D_{\cv} \norm{\bfp_1 - \bfp_3} = \ell/2\sqrt{2}, \quad \text{other directional derivs of lengths } = 0, \\
D_{\cw} \norm{\bfp_3 - \bfp_4} &= +\ell/2, \quad  D_{\cw} \norm{\bfp_2 - \bfp_4} = \ell/2\sqrt{2}, \quad \text{other directional derivs of lengths } = 0.
\end{align*}
The fourth tangent vector $\x$ is shown at the right in Figure~\ref{fig:slqmotions} and has  
\begin{equation*}
D_{\x} \norm{\bfp_1 - \bfp_3} = +\ell/2\sqrt{2}, D_{\x} \norm{\bfp_2 - \bfp_4} = -\ell/2\sqrt{2},  \text{other directional derivs of lengths } = 0.
\end{equation*}
Working out the directional derivatives of $s_{124}^2$, $s_{231}^2$, $s_{342}^2$, and $s_{132}^2 - s_{241}^2$ in these directions, we see that $Dg$ restricted to the span of $\cu$, $\cv$, $\cw$, and $\x$ looks like the matrix:
\begin{equation*}
Dg = \begin{pmatrix}
1  & 0  & 0 & 0 \\
-1 & 1  & 0 & 0 \\
0  & -1 & 1 & 0 \\
\ast  & \ast  & \ast & 2 \\
\end{pmatrix}
\end{equation*}
where the $\ast$ entries represent nonzero values that we don't need to compute. 

Now we make a similar construction for nonplanar configurations in $\slq$. Assume the square-like quadrilateral $\p\in\slq$ has sides of length $\ell=|\bfp_{1}-\bfp_2|$ etc., and diagonals have length $m=|\bfp_1-\bfp_3|=|\bfp_2-\bfp_4|$. Consider the situation shown in Figure~\ref{fig:slqmotions2}. Let us focus on edge $\bfp_2 \bfp_3$ for convenience. At $\bfp_3$ the plane determined by $\bfp_1$, $\bfp_4$, and $\bfp_3$ has normal vector, say $\bf{n}$. Consider the tangent vector ${\cn}=(0,0,{\bf n},0)$. Since $\bf{n}$ is perpendicular to vectors $\bfp_4 \bfp_3$ and $\bfp_1 \bfp_3$, the directional derivatives of the lengths of these edges in this direction are zero. On the other hand, since the tetrahedron is not a planar square, $\bfp_2 \bfp_3$ is not in the plane normal to ${\bf n}$, so the directional derivative of $\norm{\bfp_2 - \bfp_3}$ is nonzero. We can now find some scalar multiple $\cv_3$ of $(0,0,{\bf n},0)$ so that $D_{{\cv}_3} \norm{\bfp_2 - \bfp_3} = \ell/2$. This implies that $Dg(\cv_3) = (0,1,-1,0)$.  

We can make a similar argument at vertex $\bfp_1$. Let ${\bf n}$ be a normal vector of the $\bfp_1 \bfp_2 \bfp_3$ plane and find $\cv_1$ parallel to ${(\bf{n}},0,0,0)$ so that $D_{{\cv}_1} \norm{\bfp_1 - \bfp_4} = -\ell/2$ and $Dg(\cv_1) = (1,0,0,0)$. A similar argument at $\bfp_4$ yields a vector $\cv_4$ with $D_{{\cv}_4} \norm{\bfp_3 - \bfp_4} = \ell/2$ while preserving all other edgelengths to first order. Scaling appropriately, we can arrange to have $Dg(\cv_4) = (0,0,1,0)$. 

As shown in Figure~\ref{fig:slqmotions2} at right, we can also find a tangent direction $\cw$ so that $D_{\cw} \norm{\bfp_3 - \bfp_1} = \ell^2/2m$ while the directional derivatives of all other edgelengths vanish. This choice gives  $Dg(\cw) = (0,0,0,1)$. Taken together, we have constructed a subspace of $T_{\p} \cfr$ given by $\operatorname{Span}(\cv_1,\cv_3,\cv_4,\cw)$ on which 
\begin{equation*}
Dg = \begin{pmatrix}
1  & 0  & 0 & 0 \\
0  & 1  & 0 & 0 \\
0  & -1 & 1 & 0 \\
0  & 0  & 0 & 1 \\
\end{pmatrix}
\end{equation*}

\begin{figure}
\hfill
\begin{overpic}[height=1.5in]{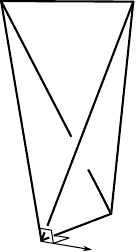}
\put(-4,102){$\bfp_4$}
\put(47,10){$\bfp_1$}
\put(55,102){$\bfp_2$}
\put(12,-3){$\bfp_3$}
\end{overpic}
\hfill
\begin{overpic}[height=1.5in]{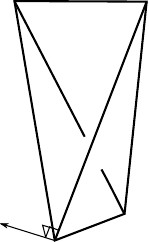}
\put(0,102){$\bfp_4$}
\put(52,5.5){$\bfp_1$}
\put(62,102){$\bfp_2$}
\put(14,-5){$\bfp_3$}
\end{overpic}
\hfill
\hphantom{.}
\\

\caption{Two types of motions of a quadrilateral with equal sides and equal diagonals in $\R^k$. Such a quadrilateral is always a tetrahedron which projects to a square along the axis joining the midpoints of the diagonals. The motion on the left increases $\norm{\bfp_2 - \bfp_4}$ to first order while preserving all other edgelengths. The motion on the right decreases the length $\norm{\bfp_3 - \bfp_4}$ to first order, while preserving all other edgelengths to first order.}
\label{fig:slqmotions2}
\end{figure}

Using these bases, we can now compute the effect of the $\Z/4\Z$ action $\mu$ on the orientation of $\cfr$ and $\slq$. 
First, observe that the tangent space to $\cfr$ contains of four copies of $T\R^k$ and that reordering these from $(1,2,3,4)$ to $(2,3,4,1)$ requires $3k^2$ swaps of basis elements. Thus $\mu$ is orientation preserving or reversing on $\cfr$ as $k$ is even or odd. 

Now take any positively oriented basis $B$ for $T_{\p} \slq$ and extend it by a basis $B'$ so that $Dg$ maps $\Span (B')$ onto the tangent space to $\R^4$ in such a way that the image of $\Span (B')$ is positively oriented with respect to the orientation of $\R^4$. We want to know whether $\mu(B)$ is positively oriented. We know that the combined basis $\mu(B,B')$ is positively or negatively oriented in $\cfr$ as $k$ is even or odd. It remains to show that $Dg$ maps $\Span(\mu(B'))$ onto the tangent space for $\R^4$ so that the image is positively oriented. This comes down to an explicit calculation of determinants. 

For a planar configuration $\p\in\slq$, we use the basis $\cu$, $\cv$, $\cw$, $\x$ constructed above. We can compute that, on the space $\Span(d\mu_{\p}(\cu), d\mu_{\p}(\cv), d\mu_{\p}(\cw), d\mu_{\p}(\x))$, we have:
\begin{equation*}
Dg = \begin{pmatrix}
-1  &  1  & 0 & 0 \\
0  &  -1  &  1 & 0 \\
0 &  0  &  -1 & 0 \\
\ast  &  \ast  &  \ast & -2 \\
\end{pmatrix},
\end{equation*}
where again, $\ast$ represents a value we don't need to compute. This is a matrix of positive determinant, as desired. For a non-planar configuration in $\slq$, we use the basis $\cv_1$,$\cv_3$,$\cv_4$,$\cw$ constructed above and compute that, on $\Span(d\mu(\cv_1),d\mu(\cv_3),d\mu(\cv_4),d\mu(\cw))$, we have
\begin{equation*}
Dg = \begin{pmatrix}
0  &  1  & 0 & 0 \\
0   &  -1  &  1 & 0 \\
1   &  0  &  -1 & 0 \\
0   &  *  &  0 & -1 \\
\end{pmatrix}.
\end{equation*}
Again, this is a matrix of positive determinant, as desired.
\end{proof}

The the third interesting submanifold of configuration space is the configuration space of top-dimensional simplices with edgelengths in a given ratio.

\begin{definition}
Suppose we have a ratio of $\binom{k+1}{2}$ positive distances. It will be convenient to denote this ratio $R$ by $(k+1)^2$ coefficients $d_{ij}$ where $d_{ii} = 0$ and $d_{ji} = d_{ij}$ (these are not unique). We will call such a ratio a \emph{simplex distance ratio}. The space of configurations in $C_{k+1}[\R^k]$ given by points $\p=(\bfp,\alpha(\bfp))$ with $s_{ijk}(\bfp) = d_{ij}/d_{ik}$ will be denoted $\simpR$. This simplex distance ratio will be called \emph{constructible} if $\simpR$ is nonempty.
\label{def:constructible}
\end{definition}

The theory of distance geometry allows us to decide which ratios are constructible by a simple calculation:

\begin{theorem}[Cayley-Menger Theorem~\cite{Blumenthal:1943vx}, (cf. \cite{berger1987geometry}, Section 9.7)]
A simplex distance ratio $R = \{d_{ij}\}$ is constructible $\iff$ the Cayley-Menger determinant:
\begin{equation*}
D(d_{11}, \dots, d_{k+1,k+1}) = 
\left|
\begin{matrix}
0 & 1 & 1 & \dots & 1 \\
1 & 0 & d_{12}^2 & \dots & d_{1,k+1}^2 \\
1 & d_{21}^2 & 0 & \dots & d_{2,k+1}^2 \\
\vdots & \vdots & \vdots &  & \vdots \\
1 & d_{k+1,1}^2 & d_{k+1,2}^2 & \dots & 0 \\
\end{matrix}
\right|
\end{equation*}
is non-negative. In fact, if $d_{ij} = |\bfp_i - \bfp_j|$ for $\bfp_1,\dots,\bfp_{k+1}\in\R^k$, the volume $V$ of the simplex with vertices $\bfp_1, \dots, \bfp_{k+1}$ obeys 
\begin{equation*}
V^2(d_{11}, \dots, d_{k+1,k+1}) = \frac{(-1)^{k+1}}{2^k(k!)^2} D(d_{11}, \dots, d_{k+1,k+1}).
\end{equation*}
\end{theorem}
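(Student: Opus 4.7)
My plan is to prove the two directions separately: first derive the volume formula assuming an embedding exists (which yields non-negativity as a consequence), then show that non-negativity suffices to construct an embedding.

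For the volume formula, I would start with the standard identity
\begin{equation*}
V = \frac{1}{k!} \left| \det M \right|, \qquad M = [\bfp_2 - \bfp_1 \mid \bfp_3 - \bfp_1 \mid \cdots \mid \bfp_{k+1} - \bfp_1],
\end{equation*}
so that $V^2 = \frac{1}{(k!)^2} \det(M^T M)$. The Gram matrix $M^T M$ has entries $(\bfp_i - \bfp_1)\cdot(\bfp_j - \bfp_1)$, which by the polarization identity equal $\tfrac12(d_{1i}^2 + d_{1j}^2 - d_{ij}^2)$. The goal is then to show by elementary row and column operations that
\begin{equation*}
\det(M^T M) = \frac{(-1)^{k+1}}{2^k} D(d_{11}, \dots, d_{k+1,k+1}).
\end{equation*}
Concretely, I would augment the Gram matrix with a bordering row and column of ones and zeros to realize the $(k+2) \times (k+2)$ Cayley-Menger matrix, then use the rows of ones to subtract off the $d_{1i}^2 + d_{1j}^2$ contributions and convert the inner-product entries into squared distances $d_{ij}^2$. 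Tracking the sign factor $(-1)^{k+1}$ and the $2^k$ that emerges from clearing the factors of $\tfrac12$ completes the identity.

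With the volume identity in hand, the forward direction is immediate: since $M^T M$ is positive semi-definite, $\det(M^T M) \geq 0$, hence $(-1)^{k+1} D \geq 0$ as claimed.

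For the converse I would argue by induction on $k$. The base case $k=1$ is immediate. For the inductive step, assume the ratio $\{d_{ij}\}_{i,j \le k+1}$ has non-negative Cayley-Menger determinant, and consider the subratio on indices $1, \dots, k$. By a sign-tracking argument on principal minors (the Cayley-Menger determinants of subconfigurations must alternate appropriately, which can be extracted from the same volume formula applied in lower dimensions), the subratio is constructible in $\R^{k-1}$; place these $k$ points in the hyperplane $x_k = 0$ of $\R^k$. The final point $\bfp_{k+1}$ must then satisfy $k$ equations $|\bfp_{k+1} - \bfp_i|^2 = d_{i,k+1}^2$, which determine the first $k-1$ coordinates of $\bfp_{k+1}$ uniquely and determine $x_k^2$ for the last coordinate. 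Non-negativity of the full Cayley-Menger determinant is exactly what is required to make this last $x_k^2 \ge 0$ (using the volume formula applied to the candidate configuration), so a real solution exists.

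The main obstacle is the bookkeeping in the determinant manipulation, particularly getting the sign $(-1)^{k+1}$ and the normalization $2^k(k!)^2$ correct, and verifying in the inductive step that non-negativity of the full determinant translates precisely into non-negativity of the last coordinate squared (rather than some weaker condition). The rest is largely routine linear algebra. Since this is the classical Cayley–Menger theorem due to Blumenthal, the cleanest presentation would cite his proof for the algebraic manipulations and focus our attention on the inductive geometric construction.
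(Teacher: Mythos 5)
The paper offers no proof of this statement at all---it is quoted from Blumenthal and Berger---so the only fair comparison is with the classical argument those references contain. Your derivation of the volume identity is exactly that argument: border the Gram matrix with entries $(\bfp_i-\bfp_1)\cdot(\bfp_j-\bfp_1)=\tfrac12(d_{1i}^2+d_{1j}^2-d_{ij}^2)$ by a row and column of ones and reduce to the Cayley--Menger determinant. That part, and the forward implication via positive semidefiniteness of $M^TM$, are correct. (Note that what actually follows is $(-1)^{k+1}D\ge 0$, which is what you write; the paper's phrase ``$D$ is non-negative'' has the sign wrong for even $k$, as its own Heron's-formula example $D(a,b,c)\le 0$ shows.)

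The converse as you have sketched it has a genuine gap, and it cannot be repaired, because the converse is false as stated. Your inductive step needs the subratio on $\{1,\dots,k\}$ to be constructible in $\R^{k-1}$, and you propose to get this by ``sign-tracking on principal minors\dots extracted from the same volume formula''; but the volume formula only constrains configurations already known to embed, and the sign of the top Cayley--Menger determinant does not control the signs of its principal minors. Concretely, for $k=3$ take $d_{12}=d_{13}=1$, $d_{23}=10$, $d_{14}=2$, $d_{24}^2=1/5$, $d_{34}^2=205$. The triple $\{1,2,3\}$ violates the triangle inequality, so no embedding exists in any $\R^n$; nevertheless the matrix $G=(G_{ij})$ with $G_{ij}=\tfrac12(d_{1i}^2+d_{1j}^2-d_{ij}^2)$ for $i,j\in\{2,3,4\}$ has $\det G\approx 3914>0$ (signature $(+,-,-)$), so $(-1)^{k+1}D=2^{k}\det G>0$ and the stated criterion is met. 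The correct hypothesis is that $G$ be positive semidefinite---for nondegenerate ratios, equivalently that the whole nested chain of Cayley--Menger determinants $D(d_{11},\dots,d_{jj})$, $j=2,\dots,k+1$, alternate in sign. With that strengthened hypothesis your induction does close: positive semidefiniteness passes to principal submatrices, and the last coordinate of $\bfp_{k+1}$ satisfies $x_k^2=\det G/\det G'$, a ratio of consecutive determinants in the chain, hence is non-negative. Since the paper only ever applies the criterion to ratios it realizes explicitly (and uses the volume formula for nondegeneracy), the imprecision is harmless there, but a proof of the biconditional exactly as written is not available.
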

If we fix the simplex distance ratio $R$, then we note that when the Cayley-Menger determinant is positive, the configurations $\p\in\simpR$ consist of similar copies of the same simplex. 

The Cayley-Menger determinant generalizes standard facts in triangle geometry: for instance, for a triangle with side lengths $a$,$b$, and $c$ we can write this determinant explicitly as 
\begin{equation*}
D(a,b,c) = a^4-2 a^2 b^2-2 a^2 c^2+b^4-2 b^2 c^2+c^4 = -(a+b+c)(a+b-c)(a-b+c)(-a+b+c).
\end{equation*}
and conclude that 
\begin{equation*}
\operatorname{Area}(a,b,c)^2 = \frac{1}{16} (a+b+c)(a+b-c)(a-b+c)(-a+b+c).
\end{equation*}
which is Heron's formula for the area of the triangle. We can see the triangle inequality, (a criteria for constructability of a triangle), in this formula: the sign of the squared area would be negative if and only if one of the side lengths was greater than the sum of the other two. Our previous ``degenerate'' ratios for square-like quadrilaterals correspond to cases where one of the side lengths is equal to the other two: in such a case the Cayley-Menger determinant (and the volume of the simplex) vanish. This motivates the following:
\begin{definition}
A simplex distance ratio $R = \{d_{ij}\}$ is \emph{degenerate} if $D(d_{11},\dots,d_{k+1,k+1}) = 0$.
\end{definition}

We can characterize the space $\simpR$ in a useful way:
\begin{proposition}
If $R$ is a constructible, nondegenerate simplex distance ratio, then $\simpR$ is a submanifold with boundary of $C_{k+1}[\R^k]$,
diffeomorphic to $O(k) \cross \R^{k} \cross [0,\infty)$,
and $\bdy \simpR \subset (1 \cdots k+1) \subset \bdy C_{k+1}[\R^k]$. 
\label{prop:simpR is submanifold}
\end{proposition}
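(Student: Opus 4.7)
The plan is to construct an explicit diffeomorphism
\begin{equation*}
\bar\Phi \co O(k) \times \R^k \times [0, \infty) \to \simpR
\end{equation*}
using a fixed reference simplex. Since $R$ is constructible, I would fix once and for all a configuration $(\bfq_1, \ldots, \bfq_{k+1})$ in $\R^k$ realizing the ratio, with $\bfq_1 = \vzero$ and $|\bfq_i - \bfq_j| = d_{ij}$; nondegeneracy of $R$ ensures that $\bfq_2, \ldots, \bfq_{k+1}$ are linearly independent and hence span $\R^k$. Then $\bar\Phi(O, \bfv, t) = (\bfv + t O \bfq_1, \ldots, \bfv + t O \bfq_{k+1})$ for $t > 0$, and at $t = 0$ I would let $\bar\Phi(O, \bfv, 0)$ be the boundary point of $C_{k+1}[\R^k]$ sitting in the stratum $(1 \cdots k+1)$ with collapsed location $\bfv$, directions $\pi_{ij} = O(\bfq_i - \bfq_j)/|\bfq_i - \bfq_j|$, and ratios $s_{ijk} = d_{ij}/d_{ik}$.

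The first step is to check that the image of $\bar\Phi$ lies in $\simpR$ and that $\bar\Phi$ is continuous. For $t > 0$ the configuration $\bar\Phi(O, \bfv, t)$ is a rigid similarity of the reference simplex, so every $s_{ijk}$-value equals $d_{ij}/d_{ik}$; moreover neither the $\pi_{ij}$ nor the $s_{ijk}$ depend on $t$ or $\bfv$, so the limits as $t \to 0^+$ match the data prescribed at $\bar\Phi(O, \bfv, 0)$. Continuity across $t = 0$ then follows from the continuity of $\pi_{ij}$ and $s_{ijk}$ on all of $C_{k+1}[\R^k]$ guaranteed by \thm{configgeom}.

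The second step is to show $\bar\Phi$ surjects onto $\simpR$ and that $\bdy\simpR \subset (1 \cdots k+1)$. On the interior, any two $(k+1)$-point configurations in $\R^k$ with matching edge-length ratios differ by a unique rigid similarity (orthogonal transformation, positive scaling, translation), which exhibits each interior point of $\simpR$ uniquely as $\bar\Phi(O,\bfv,t)$ with $t > 0$. For boundary points, nondegeneracy forces $d_{ij} > 0$ for all $i \ne j$, so any boundary configuration $\p$ with $\bfp_i = \bfp_j$ but some $\bfp_l \ne \bfp_i$ would have $s_{ijl}(\p) = 0 \ne d_{ij}/d_{il}$, contradicting membership in $\simpR$. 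Hence every boundary point of $\simpR$ lies in the single top-collapse stratum $(1 \cdots k+1)$, where a configuration is determined by its basepoint in $\R^k$ together with its directions $\pi_{ij}$; requiring those directions to realize the shape of the reference simplex reduces the direction data to a choice of $O \in O(k)$.

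The final step is to show $\bar\Phi$ is a smooth embedding of manifolds with boundary. On the open subset $t > 0$ this is immediate from the formula; a dimension count $\dim(O(k) \times \R^k \times (0,\infty)) = k(k-1)/2 + k + 1$ matches $\dim C_{k+1}[\R^k] - \left(\binom{k+1}{2} - 1\right)$, and combined with uniqueness of the similarity decomposition this shows $d\bar\Phi$ is injective of full rank. The main obstacle will be the analysis at $t = 0$: I must verify smoothness and full rank of $\bar\Phi$ within the smooth structure that $C_{k+1}[\R^k]$ carries as a manifold with boundary and corners near the $(1 \cdots k+1)$ stratum. That stratum admits polar-type local coordinates --- a basepoint in $\R^k$, a non-negative scale parameter, and a normalized infinitesimal shape --- in which our parameter $t$ is precisely the natural collar coordinate, so $\bar\Phi$ will extend across the boundary as a smooth embedding. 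With this in hand, $\bar\Phi$ identifies $\simpR$ with $O(k) \times \R^k \times [0, \infty)$ as a submanifold with boundary of $C_{k+1}[\R^k]$, and the boundary containment $\bdy \simpR \subset (1 \cdots k+1)$ follows from the second step.
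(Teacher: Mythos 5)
Your argument is correct in outline and rests on the same underlying observation as the paper's: a constructible, nondegenerate ratio determines the simplex up to similarity, so $\simpR$ is swept out by the $O(k) \cross \R^k \cross (0,\infty)$ of rigid motions and scalings of a single reference simplex, with the scale-zero limits filling in a boundary inside the $(1 \cdots k+1)$ stratum. The difference is in which direction the diffeomorphism is written and, consequently, where the work lands. The paper defines the \emph{forward} map $f(\p) = \operatorname{GS}(A_{\p}) \cross \bfp_1 \cross \norm{\bfp_1 - \bfp_2}$, where $\operatorname{GS}$ is Gram--Schmidt applied to the matrix of direction vectors $\pi_{1j}$; because the $\pi_{ij}$ and $s_{ijk}$ extend smoothly over the boundary faces (Theorem~\ref{thm:configgeom}), smoothness of $f$ up to $\bdy\simpR$ is automatic, and the effort goes into showing $Df$ has trivial kernel, for which the paper uses $O(k)$-equivariance of Gram--Schmidt together with infinitesimal rigidity of the simplex (Alexandrov). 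You instead build the \emph{inverse} map from a fixed reference simplex, which makes surjectivity and injectivity transparent (uniqueness of the similarity carrying one realization of $R$ to another, since a nondegenerate configuration has trivial $O(k)$-stabilizer), and which pins down $\bdy\simpR \subset (1\cdots k+1)$ very cleanly via $s_{ijl} = 0 \neq d_{ij}/d_{il}$ on partial-collision strata; the price is that smoothness and full rank of your map at $t = 0$ must be checked in local coordinates on the compactification near the principal face. You correctly flag this as the remaining obstacle and resolve it by the collar/blow-up structure of Sinha's construction, in which scale is the boundary-defining coordinate; that step is left at roughly the same level of detail as the corresponding regularity claims in the paper's own proof, so I regard the proposal as a valid alternative route rather than a gap.
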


Each configuration $\p$ in $\simpR$ is a similar copy of a single simplex, while the boundary consists of ``infinitesimal'' copies of the same simplex. 


\begin{proof} 
To construct a map $f:\simpR \rightarrow O(k) \cross \R^k \cross [0,\infty)$ explicitly, take a point $\p=(\bfp,\alpha(\bfp))$ in $\simpR$ where $\bfp=(\bfp_1,\dots,\bfp_{k+1})$, and consider the matrix  of vectors $A_{\p}=\begin{bmatrix} \pi_{12} &\pi_{13} &\hdots& \pi_{1(k+1)} \end{bmatrix}$.  Since the simplex distance ratio is nondegenerate, the Cayley-Menger theorem tells us that the column vectors of $A_{\p}$ are $k$ linearly independent vectors in $\R^k$. The Gram-Schmidt process provides a smooth map taking any such configuration to a matrix in $O(k)$. We denote this process by $ \operatorname{GS}(A_{\p})$. It is easy to see that the Gram-Schmidt process obeys the equivariance relation $\operatorname{GS}(B \cdot A_{\p}) = B\cdot\operatorname{GS}(A_{\p})$ for any matrix $B \in O(k)$.  

We can now define our map to be $f (\p):= \operatorname{GS}(A_{\p}) \cross \bfp_1 \cross |\bfp_1 - \bfp_2|$. By the equivariance property above, and the since the action of $O(k)$ on a nondegenerate simplex $\p$ has no fixed points, this is a smooth bijection from $\simpR$ to $O(k) \cross \R^k \cross [0,\infty)$. Note that when $|\bfp_1-\bfp_2|=0$ (since the ratios of all pairwise distances are fixed) the simplex $\p$ must be ``infinitesimal", and $\p$ must lie in the $(1 \cdots k+1)$ stratum. Indeed we find $\bdy \simpR \subset (1 \cdots k+1)\subset \bdy C_{k+1}[\R^k]$.

To show that $f$ is a diffeomorphism, we must consider the differential of the map and prove that it has no kernel. So consider a variation $\cv$ of $\p$. If it moves $\bfp_1$, then $Df(\cv)$ has a nonzero component in the $\R^k$ coordinates. Noting that the action of $\cv$ on $\pij$ changes no $\sijk$, then if $\cv$ changes any pairwise distance between vertices to first order,  it changes the pairwise distance between vertices $\bfp_1$ and $\bfp_2$ to first order, and hence $Dg(\cv)$ has a nonzero component in the $[0,\infty)$ coordinate. So suppose that $\cv$ changes no $\pij$. By Alexandrov's theorem on rigidity of convex polyhedra (see Theorem~25 of~\cite{MR2127379}) this implies that $\cv$ generates a motion in $O(k)$. Differentiating the equivariance relation above completes the proof.
\end{proof} 

By affine independence, $\simpR$ deformation retracts to $O(k)$, and so it has the homology of $O(k)$. Let this projection be denoted $\pi \colon \simpR \rightarrow O(k)$. Copies of the simplex in $\simpR$ that share an orientation form a connected component of $\simpR$ diffeomorphic to $SO(k) \cross \R^{k} \cross [0,\infty]$; we will denote the configurations $\p = (\bfp_1, \dots, \bfp_{k+1})$ in $\simpR$ where the matrix with columns $\pi_{1(k+1)}, \dots, \pi_{k(k+1)}$ has positive determinant by $\simpR^+$.

\section{Configuration Spaces and Transversality }

In this section, we prove a transversality ``lifting property'' for compactified configuration spaces: The submanifold of configurations of points on a smoothly embedded submanifold $M$ of $\R^k$ may be made transverse to any submanifold $Z$ of the configuration space of points in $\R^k$ by an arbitrarily small variation of $M$, as long as the two submanifolds of configuration space are boundary-disjoint. This is a useful technique and parts of it have been proved before. For instance, Budney et al.~\cite{newkey118} prove a special case of this result. 
We will show that a general form of this result may be obtained easily from the Multijet Transversality Theorem~(\cite{Golubitsky:1974iu}, Theorem 4.13).

We begin by recalling some details about the construction of jet space and the Whitney $C^\infty$ topology on mappings. Then we will state the multijet transversality theorem and show that our desired result on configuration space transversality follows. 

\begin{definition}
\label{def:jetspace}
Let $M$ and $N$ be smooth manifolds, and $f$ be a smooth function $f \co M \rightarrow N$. The {\bf space of $0$-jets} $J^0(M,N) = M \cross N$. The {\bf $0$-jet of $f$} is the function $j^0 f \co M \rightarrow J^0(M,N)$ given by $j^0 f(\bfp) = (\bfp,f(\bfp))$.
\end{definition}

It is a standard fact that jet space $J^0(M,N)$ is a smooth manifold. Further, $0$-jet spaces may be extended to $k$-jet spaces by an inductive procedure involving taking successive derivatives. We won't need higher jet spaces here, so we refer the interested reader to \cite{Golubitsky:1974iu} for details. We can extend the definition of jet space to a space of $n$-fold \emph{multijets} as follows.

\begin{definition}
\label{def:multijets}
The {\bf $n$-fold $0$-multijets} $J_n^0(M,N)$ are the configuration space $C_n(J^0(M,N))$. Given a smooth function $f \co M \rightarrow N$, there is a natural smooth map $j^0_n f \co \cnmo \rightarrow J_n^0(M,N)$ given by
\begin{equation*}
j^0_n f(\p) = (j^0 f(\bfp_1), \dots, j^0 f(\bfp_n)).
\end{equation*}
\end{definition}
If this definition seems a bit puzzling, recall that the jet $j^0 f (\bfp_i)$ includes the location $\bfp_i$ as part of its data, so there is no danger of ``collisions'' in the tuple $(j^0 f(\bfp_1), \dots, j^0 f(\bfp_n))$ because the $\bfp_i$ are distinct by assumption. Notice also that while the space $\cnmo$ includes much more data than the $\bfp_i$, all that additional data is determined uniquely by the $\bfp_i$ so the extra information is basically irrelevant here. 

We can now state the theorem we need:
\begin{theorem}[0-Multijet Transversality Theorem, \cite{Golubitsky:1974iu} Theorem 4.13]
Let $M$ and $N$ be smooth manifolds and let $Z$ be a submanifold of $C_n(J^0(M,N))$. Let 
\begin{equation*}
T_Z = \left\{ f \in C^\infty(M,N) \mid j_n^0 f \transverse Z \right\}.
\end{equation*}
Then $T_Z$ is $C^m$-dense in $C^\infty(M,N)$ for any $m$. In fact, if $Z$ is compact, then $T_Z$ is $C^\infty$ open in $C^\infty(M,N)$. 
\label{thm:multijet}
\end{theorem}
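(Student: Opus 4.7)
The plan is to follow the standard Golubitsky--Guillemin strategy: reduce the infinite-dimensional statement to a finite-dimensional parametrized transversality argument combined with a Baire category exhaustion. First, I would choose a countable locally-finite cover of $C_n(M)$ by compact sets $\{K_\alpha\}$ (or, if $Z$ is compact, take a single compact set containing its projection to $M^{\times n}$). For each $\alpha$, let $T_Z^\alpha \subset C^\infty(M,N)$ be the set of $f$ such that $j^0_n f \transverse Z$ at every point of $K_\alpha$. Since $T_Z = \bigcap_\alpha T_Z^\alpha$, by the Baire property of $C^\infty(M,N)$ with the Whitney topology it suffices to prove that each $T_Z^\alpha$ is $C^\infty$-open and $C^m$-dense; then the compact case is an immediate corollary.

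For openness, transversality of $j^0_n f$ to $Z$ at a point $\p\in K_\alpha$ is an open condition on the $1$-jet of $f$ at the points $\bfp_i$: either $j^0_n f(\p)\notin Z$, which persists under small $C^0$ perturbations since $Z$ is locally closed, or $j^0_n f(\p)\in Z$ and the transversality condition on $D(j^0_n f)$ is an open condition on first derivatives. Compactness of $K_\alpha$ (or $Z$) upgrades this to a uniform open condition in the $C^1$ topology, which is coarser than $C^\infty$.

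Density is the substantive step. Fix $f_0\in C^\infty(M,N)$ and $\eps>0$. Because every $\p\in K_\alpha$ has pairwise distinct coordinates $\bfp_1,\dots,\bfp_n$, a compactness argument produces a finite open cover of $K_\alpha$ on which we can simultaneously choose smooth bump functions $\chi_{i,\beta}$ with pairwise disjoint supports near the $i$-th entry. Embed $N$ in some $\R^L$ and pick smooth vector fields $V_1,\dots,V_q$ on $\R^L$ spanning $\R^L$ at every point of $f_0(M)\cap(\text{a neighborhood})$. The finite-dimensional family $S = \R^{Nq}$ of perturbations of the form $f_s(\bfp) = \pi_N\bigl(f_0(\bfp)+ \sum_{i,\beta,j} s_{i,\beta,j}\chi_{i,\beta}(\bfp) V_j(f_0(\bfp))\bigr)$ (where $\pi_N$ is nearest-point projection to $N$, valid on a tubular neighborhood) defines a smooth map
\[
F \co S \times K_\alpha \longrightarrow J^0_n(M,N), \qquad F(s,\p) = j^0_n f_s(\p).
\]
By the disjointness of supports and the spanning property of the $V_j$, the partial differential $D_s F$ surjects onto the $N$-factor at each of the $n$ jet points independently; together with the natural variation in $\p$, this makes $F$ a submersion, hence $F\transverse Z$. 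The standard finite-dimensional parametrized transversality theorem (\cite{Golubitsky:1974iu}, Lemma 4.6) then yields that for a residual, hence dense, set of $s\in S$, the slice map $j^0_n f_s$ is transverse to $Z$ on $K_\alpha$. Taking $s$ small enough produces the desired approximation of $f_0$ in $T_Z^\alpha$.

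The main obstacle is the construction in the third paragraph: building a finite-dimensional perturbation family rich enough to make $F$ a submersion \emph{independently} at all $n$ jet points of every configuration in the compact piece $K_\alpha$. The essential point that makes this work—and that distinguishes multijet transversality from ordinary jet transversality—is precisely that $C_n(M)$ excludes the fat diagonal, so disjoint bump-function supports always exist and the $n$-fold independent perturbation is possible.
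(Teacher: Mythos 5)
The paper does not prove this statement: it is imported verbatim from Golubitsky--Guillemin (Theorem~4.13) and used as a black box, so there is no in-paper proof to compare against. Your sketch is a faithful reconstruction of the standard argument from that source --- Baire exhaustion over compact pieces of $C_n(M)$ staying off the fat diagonal, openness and density on each piece, and density via a finite-dimensional family of disjointly supported bump perturbations fed into the parametric transversality lemma --- and it correctly identifies the one point that distinguishes the multijet case, namely that distinct configuration points admit independent perturbations.
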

We note that the theorem is actually a bit stronger than the version we have stated, as it shows that $T_Z$ is a \emph{residual} set, meaning a countable intersection of open dense subsets of $C^\infty(M,N)$. We also note that the topology we're using on $C^\infty(M,N)$ is the (standard) Whitney $C^\infty$ topology. We can now apply this to show:

\begin{theorem}[{\bf Transversality Theorem for Configuration Spaces}]\label{thm:transversality-config}
Assume that $M$ is a compact manifold, smoothly embedded in $\R^k$, with corresponding compactified configuration spaces $\cnm$ and $\cnr$.  Assume that $Z$ is a closed topological space contained in $\cnr$ so that $Z \cap \cnro$ is a submanifold of $\cnro$ and the (set-theoretic) boundary of $Z$ is contained in $\bdry\cnr$ and is disjoint from $\bdry\cnm$. Then there exists a manifold $M'$ which is $C^\infty$ close to $M$ such that $C_n(M') \transverse (Z \cap \cnro)$ inside $\cnro$ and $\bdy Z$ and $\bdy M'$ are disjoint in $\bdy \cnr$.
\end{theorem}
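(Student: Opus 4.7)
My plan is to reduce the theorem to the $0$-multijet transversality theorem (Theorem~\ref{thm:multijet}) applied with $N = \R^k$. Fix the given embedding $\iota \colon M \hookrightarrow \R^k$. Since $M$ is compact, the set of smooth embeddings of $M$ into $\R^k$ is open in $C^\infty(M,\R^k)$ in the Whitney topology, so every $f \colon M \to \R^k$ sufficiently $C^\infty$-close to $\iota$ is again an embedding, and by Theorem~\ref{thm:functor} the induced map $C_n[f]$ realizes $C_n[f(M)]$ as a submanifold of $\cnr$. It will therefore suffice to produce such an $f$ for which $C_n(f) \transverse (Z \cap \cnro)$ in $\cnro$ and for which $\bdy C_n[f(M)]$ is disjoint from $\bdy Z$, and then set $M' = f(M)$.

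The central step is to construct a submanifold $Z' \subset J_n^0(M,\R^k) = C_n(M\times \R^k)$ whose transversality with the multijet $j_n^0 f$ is equivalent to transversality of $C_n(f)$ with $Z$. I would let $U \subset C_n(M \times \R^k)$ be the open subset of tuples $((\bfp_1,\bfq_1),\dots,(\bfp_n,\bfq_n))$ whose second coordinates $(\bfq_1,\dots,\bfq_n)$ are pairwise distinct, so that projection $\tilde\pi \colon U \to \cnro$ is a smooth submersion with fibre an open subset of $M^{\times n}$, and then take $Z' := \tilde\pi^{-1}(Z \cap \cnro)$. Since $Z \cap \cnro$ is a submanifold by hypothesis, $Z'$ is a submanifold of $J_n^0(M,\R^k)$. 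For any embedding $f$, the image of $j_n^0 f$ lies inside $U$, and a short tangent-space bookkeeping—decomposing $T J_n^0(M,\R^k)$ at $(\p,f(\p))$ as $T_{\p}\cnmo \oplus T_{f(\p)}\cnro$, noting that the image of $d(j_n^0 f)$ is the graph $\{(\cv, df(\cv))\}$ while the tangent space to $Z'$ is the cylinder $T_{\p}\cnmo \oplus T_{f(\p)} Z$—should show that $j_n^0 f \transverse Z'$ at $\p$ reduces, after projection to the $\R^k$-factor, to $df(T_{\p}\cnmo) + T_{f(\p)} Z = T_{f(\p)}\cnro$, which is exactly $C_n(f) \transverse (Z \cap \cnro)$.

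With this equivalence in hand, Theorem~\ref{thm:multijet} applied to $Z'$ produces a $C^\infty$-dense set of $f \in C^\infty(M,\R^k)$ with the desired interior transversality. Only the boundary condition then remains, and here I would use compactness: $\bdy\cnm$ and $\bdy Z$ are disjoint closed subsets of $\bdy\cnr$ by hypothesis, and since $\bdy\cnm$ is compact (as $M$ is), they are separated by a positive distance in the metrizable space $\bdy\cnr$. By Theorem~\ref{thm:functor} the assignment $f \mapsto C_n[f(M)]$, and hence $f \mapsto \bdy C_n[f(M)]$, is continuous in the $C^\infty$ topology on embeddings, so $\bdy C_n[f(M)]$ remains disjoint from $\bdy Z$ whenever $f$ is sufficiently $C^\infty$-close to $\iota$. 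Intersecting this open condition on $f$ with the $C^\infty$-dense transversality condition above should yield an $f$ with all required properties.

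The step I expect to need most care is the tangent-space equivalence in the middle paragraph: precisely the extra $M^{\times n}$-factor in $J_n^0(M,\R^k)$ is what decouples the graph of $df$ from the vertical $T_{f(\p)} Z$-summand and forces the equivalence between multijet and configuration-space transversality. Without it, the usual transversality theorem for pairs of submanifolds in a fixed ambient manifold could not be applied, because one would have no guarantee that a generic perturbation of $C_n[M]$ inside $\cnr$ still arises from a single varying embedding of $M$. A minor secondary subtlety is that $Z'$ is defined only on the open set $U$, so multijet transversality controls only the interior of $\cnm$; the boundary disjointness is separately guaranteed by the compactness and continuity argument above.
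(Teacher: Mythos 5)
Your proposal is correct and follows essentially the same route as the paper: both reduce to the $0$-multijet transversality theorem with $N=\R^k$ by pulling $Z\cap\cnro$ back to a cylinder in $C_n(J^0(M,\R^k))$, checking that transversality of $j^0_n f$ to this cylinder is equivalent to $C_n(f)\transverse(Z\cap\cnro)$, and handling boundary-disjointness by a compactness/continuity argument. The only cosmetic difference is that the paper truncates $Z$ away from $\bdy\cnr$ \emph{before} invoking the multijet theorem, whereas you apply it to $Z\cap\cnro$ directly and restore the boundary condition at the end.
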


\begin{proof}
Since $M$ is compact, the closed set $\cnm \cap Z$ is also compact. Since this compact set is disjoint from the closed set $\bdy\cnr$, it is separated from $\bdy\cnr$ by some $\epsilon > 0$. Replace $Z$ with its intersection $Z'$ with the interior of the complement of an $\epsilon/2$ neighborhood of $\bdy\cnr$. This $Z'$ is now an open manifold contained in $\cnro$ and remaining a bounded distance from $\bdy\cnr$.

Let $\iota \co M \rightarrow \R^k$ be the inclusion map from $M$ to $\R^k$. We will prove that a $C^\infty$-small modification $\iota'$ of $\iota$ gives $C_n[\iota'(M)]$ that is transverse to $Z$. In the first place, since $M$ is compact, a $C^\infty$ small modification $\iota'$ is still a diffeomorphism onto its image, and hence still a smooth embedding of $M$ into $\R^k$ with image a manifold $\iota'(M) = M'$ which is $C^\infty$ close to $M$. 

Next, since $C_n[-]$ is a continuous map from $C^\infty(M,\R^k)$ to $C^\infty(\cnm,\cnr)$, $C_n[M']$ will be $C^\infty$ close to $C_n[M]$ and hence we can choose the modification of $\iota$ small enough that the intersections of $C_n[M']$ with $Z'$ are at least $(3/4) \epsilon$ from $\bdy \cnr$. This means that they are intersections with the original $Z$ and that $C_n[M'] \transverse Z' \implies C_n[M'] \transverse Z$. 
Since $Z'$ does not approach $\bdy\cnr$, it suffices to show that we can modify $\iota$ so that $C_n(\iota') \co \cnmo \rightarrow \cnro$ is transverse to $Z'$. 

Generally speaking, the $n$-fold $0$-multijet $j^0_n(\iota)$ maps $\cnmo$ into $C_n(J^0(M,\R^k)) = C_n(M \cross \R^k)$; that is, it should map a disjoint collection of points $\bfp_i \in M$ to a disjoint collection of pairs in the form $(\bfp_i,\iota(\bfp_i))$ in $M \cross \R^k$. But since $\iota$ is a diffeomorphism onto its image, it is $1-1$, and the $\iota(\bfp_i)$ are distinct as well as the $\bfp_i$. This means that we can think of such a multijet as a map 
\begin{equation*}
j^0_n \iota \co \cnmo \rightarrow \cnmo \cross \cnro, \text{ where } j^0_n(\p) = (\p,C_n(\iota)(\p)).
\end{equation*}
Since being a diffeomorphism onto the image is a stable property under $C^\infty$ perturbations of a map, we may view the $n$-fold $0$-multijet of any perturbation $\iota'$ of $\iota$ in the same way.

Now define a (relatively open) submanifold of $\cnmo \cross \cnro$ by $\cnmo \cross Z'$. Applying Theorem~\ref{thm:multijet}, we see that there is some map $\iota'$ which is $C^\infty$ close to $\iota$ so that $j^0_n \iota'$ is transverse to $Z$. We claim that this implies $C_n(\iota') \transverse Z'$ and hence completes the proof. This follows easily from the definition of transversality if we consider the commutative diagram below ($\pi$ is projection). 
\begin{equation*}
\begin{diagram}
\cnmo & \rTo^{j^0_n \iota'}  & \cnmo \cross \cnro \\
      & \rdTo_{C_n(\iota')}  & \dTo_\pi \\
      &       & \cnro  \\
\end{diagram}
\end{equation*} 
\end{proof}

\section{Applications}

We have now established that the configuration space of $n$-tuples of points in $\R^k$ can be viewed as a manifold with boundary $C_n[\R^k]$, and that, for any smooth submanifold of $M$ of $\R^k$, there is a proper embedding of $C_n[M] \hookrightarrow C_n[\R^k]$ so that $C_n[M]$ is transverse to $\bdy C_n[\R^k]$. We now specialize to the case where $M$ is a sphere $S^l$ and show any $C^1$ embedding $\gamma$ of $S^l$ in $\R^k$ is $C^1$ close to a smooth embedding $\gamma$ for which $C_n[\gamma']$ is guaranteed to have certain intersections with various ``target'' submanifolds of $\cnr$ defined by geometric conditions. This will prove that a dense set of embeddings of $S^l$ always contain certain inscribed configurations of points.

These applications will follow the same basic pattern:

\begin{itemize}
\item Establish the existence of a transverse intersection between $C_n[S^l]$ and the target submanifold $Z$ inside $\cnr$ for a standard embedding of $S^l$. Compute the homology class of the intersection.
\item Use our transversality theorem to find a smooth embedding $\gamma'$ of $S^l$ which is $C^1$-close to the original embedding $\gamma$ so that $C_n[\gamma'] \transverse Z$. This will require that $\cng$ and $Z$ are boundary-disjoint.
\item Use Haefliger's theorem on smooth embeddings~\cite{Haefliger:1961wr} to find a smooth map $E:S^l \cross I \rightarrow \R^K$ with $E(-,0)$ our standard embedding and $E(-,1) = \gamma'$ (where $K$ may be greater than our original $k$). Lift $E$ to a map $C_n[S^l] \cross I \rightarrow C_n[\R^K]$ by functoriality. Now modify this lifted map using the transversality homotopy theorem to be transverse to $Z$ everywhere.
\item Conclude that the intersections $C_n[S^l] \cap Z$ and $C_n[\gamma'] \cap Z$ are cobordant in $C_n[S^l] \cross I$ and hence that they represent the same homology class in $Z$.
\end{itemize}

We recall Haefliger's result in a form useful to us (actually, his result is stronger). We use this result to deform our standard spheres into the spheres of interest. Generally, such an isotopy must pass through spheres embedded in a higher-dimensional space, as when the spheres are knotted. Since differentiable knotting is stronger than topological knotting and we prefer to work in the differentiable category, we will need even more extra room to work\footnote{With various topological tameness assumptions, it would be enough to pass through $l$-spheres in $\R^{l+3}$ by Zeeman's result on PL-unknotting~\cite{Zeeman:1963ta}, but there seems to be no practical penalty for using the differentiable result as we start and end with a sphere in the original $\R^k$ in any case.}:
\begin{theorem}{{\rm \cite{Haefliger:1961wr}}}
Any two differentiable embeddings of $S^l$ in $\R^k$ are differentiably isotopic through an isotopy in $\R^{K} \supset \R^k$ when $K \geq \max\{4l,k\}$.
\label{thm:isotopy}
\end{theorem}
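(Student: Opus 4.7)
The statement is Haefliger's 1961 theorem on differentiable isotopy of spheres in the metastable range, quoted here from \cite{Haefliger:1961wr}. Since the paper cites rather than re-derives the result, my plan is to sketch the strategy of Haefliger's original argument.

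First I would reduce: rather than joining two arbitrary embeddings directly, it suffices to show that every smooth embedding $f \co S^l \to \R^K$ is differentiably isotopic to the standard inclusion $S^l \hookrightarrow \R^{l+1} \subset \R^K$. Then two embeddings $f_0, f_1 \co S^l \to \R^k \subset \R^K$ can be joined in $\R^K$ by concatenating the isotopy from $f_0$ to the standard embedding with the reverse of the isotopy from $f_1$ to the standard embedding.

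Next I would apply Smale--Hirsch immersion theory: immersions of $S^l$ into $\R^K$ are classified up to regular homotopy by $\pi_l(V_{K,l})$, where $V_{K,l}$ is the Stiefel manifold of $l$-frames in $\R^K$. This group vanishes in the stable range $K \geq 2l+1$, so the hypothesis $K \geq 4l$ produces a regular homotopy from $f$ to the standard embedding. Generically such a regular homotopy passes through immersions with only finitely many transverse self-intersections.

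The hard part will be the third step: upgrading this regular homotopy to an isotopy via the Whitney trick. Double points appear and disappear in pairs of opposite sign along a generic regular homotopy, and each such pair can be cancelled by finding an embedded Whitney $2$-disk pairing the points and isotoping one sheet across it. This requires enough codimension to embed the disk and a simple-connectedness condition in the complement of the sphere that is automatic once $K - l \geq 3$. The main obstacle is the combinatorial accounting for signs of double points together with the construction of embedded Whitney disks in the metastable range $2K > 3(l+1)$; the very generous hypothesis $K \geq 4l$ places us well inside this range and makes all the required general-position arguments routine. Iterating the Whitney trick until every self-intersection is cancelled then completes the argument.
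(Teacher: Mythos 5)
The paper does not actually prove this statement---it is quoted from \cite{Haefliger:1961wr} with a deliberately generous bound---so the only question is whether your sketch would deliver the result. Steps (1) and (2) are fine: reducing to the standard inclusion is legitimate, and $\pi_l(V_{K,l})=0$ for $K\ge 2l+1$ gives a regular homotopy to it by Hirsch--Smale. The trouble is step (3), which you identify as ``the hard part.'' For $l\ge 1$ the hypothesis $K\ge 4l$ forces $K\ge 2l+2$, and in that range a \emph{generic} path in $C^\infty(S^l,\R^K)$ joining two embeddings already passes only through embeddings: by parametric multijet transversality (the same tool this paper uses elsewhere) the locus of triples $(x,y,t)$ with $F_t(x)=F_t(y)$ has expected dimension $2l+1-K<0$, and the locus where $dF_t$ drops rank is likewise empty since $l+1<K-l+1$. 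So no double points ever appear, a smooth path of embeddings is by definition a differentiable isotopy (ambient, after isotopy extension), and the Whitney trick never enters. Indeed step (2) is also dispensable: connectivity of the space of embeddings for $K\ge 2l+2$ follows from the same codimension count without Smale--Hirsch.

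As an account of Haefliger's actual metastable-range argument, which is what you say you are sketching, step (3) is also not faithful. In the genuinely hard range $2K\ge 3l+4$ but $K\le 2l+1$, the self-intersection locus of the track of a generic homotopy $S^l\times I\to\R^K\times I$ has dimension $2l+1-K\ge 0$: it is a positive-dimensional manifold, not a finite collection of signed double points, so ``iterating the Whitney trick until every self-intersection is cancelled'' is not an available move there. Haefliger instead eliminates this double-point manifold by an obstruction argument organized around equivariant maps of the deleted product $S^l\times S^l\setminus\Delta$, which is where the metastable inequality comes from. None of this affects the paper, which only needs the easy range; but you should either replace step (3) by the one-line general-position count above (proving the theorem as stated), or acknowledge that the Whitney-trick mechanism you describe is neither necessary for this bound nor the mechanism of Haefliger's sharp theorem.
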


\subsection{The ``square-peg'' theorem}

We can now prove a version of the square-peg theorem. Recall from Definition~\ref{def:ccng} that $\cfog$ is the submanifold of $4$-tuples on a curve $\gamma$ where the points occur in order according to the orientation of the curve, and from Definition~\ref{def:slq} that $\slq$ is the submanifold of configurations $\p$ of 4 points in $\R^k$ with equal ``sides'' $\norm{\bfp_1 - \bfp_2} = \norm{\bfp_2 - \bfp_3} = \norm{\bfp_3 - \bfp_4} = \norm{\bfp_4 - \bfp_1}$ and equal ``diagonals'' $\norm{\bfp_1 - \bfp_3} = \norm{\bfp_2 - \bfp_4}$.

We will show that when $\cfog \transverse \slq$, the number of intersections is an odd multiple of 4, giving an odd number of inscribed ``squares'' up to cyclic relabeling. We note that when $\cfog$ is not transverse to $\slq$ this count need not be odd, as shown by the examples of Popvassiliev~\cite{2008arXiv0810.4806P}.


\begin{theorem}
For any $C^1$ curve in $\R^k$, there is a $C^1$ -close curve $\gamma$ where 
$$
\cfog \cap \slq = \{ \text{an odd, finite set of inscribed squarelike quadrilaterals} \}.
$$
\label{thm:squarepeg}
\end{theorem}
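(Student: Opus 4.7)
The plan is to instantiate the four-step program stated at the beginning of this section with $l=1$, $M=S^1$, and target submanifold $Z = \slq \subset \cfr$. For the base case, take $\gamma_0$ to be a non-circular ellipse lying in the $xy$-plane $\R^2 \subset \R^k$. An elementary symmetry argument shows that $\gamma_0$ admits a unique inscribed square, with vertices of the form $(\pm x_0, \pm x_0)$ for an appropriate $x_0$, so $C^0_4[\gamma_0] \cap \slq$ consists of exactly four points, one per cyclic labeling of this square. Transversality of this intersection inside $\cfro$ can be verified by adapting the vertex-variation computation in the proof of Proposition~\ref{prop:slq is submanifold of r2}: the four variations obtained by pushing each vertex along $\gamma_0$ in the tangent direction produce images under $dg$ that span $T\R^4$ at each of the four intersection points.

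Next I would verify the boundary-disjointness hypothesis required by Theorem~\ref{thm:transversality-config}. By Proposition~\ref{prop:slq is submanifold of r2}, $\bdy\slq$ is contained in the union of the $(1234)$ and $(13)(24)$ faces of $\bdy\cfr$. The $(13)(24)$ stratum is automatically empty on $\bdy\cfog$ for any embedded curve, because cyclic ordering on $S^1$ forces $\bfp_2$ to collapse whenever $\bfp_1$ and $\bfp_3$ do. On the $(1234)$ face, four colliding points on a $C^1$ curve limit to four colinear points on the tangent line at the collision, arranged in the linear order inherited from their cyclic order along $\gamma$; a short calculation with the $\sijk$ ratios of four ordered colinear points shows that $s_{124} = 1$ would force $|\bfp_1 - \bfp_2| = |\bfp_1 - \bfp_4|$, which is incompatible with strict linear ordering. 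Hence $\bdy\cfog \cap \bdy\slq = \emptyset$ for every $C^1$ embedded $\gamma$.

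With boundary-disjointness in hand, approximate the given $C^1$ curve smoothly and apply Theorem~\ref{thm:transversality-config} to produce a smooth curve $\gamma$, still $C^1$-close to the input, with $C_4[\gamma] \transverse \slq$. Haefliger's Theorem~\ref{thm:isotopy} then supplies a smooth isotopy $E \colon S^1 \cross I \to \R^K$ from $\gamma_0$ to $\gamma$ with $K \geq \max\{4,k\}$. Functoriality (Theorem~\ref{thm:functor}) lifts $E$ to a smooth family $C_4[S^1] \cross I \to C_4[\R^K]$, which after a further small homotopy relative to $\{0,1\}$ can be assumed transverse to $\slq$ throughout. Because the boundary-disjointness argument of the previous paragraph applies uniformly to every intermediate $C^1$ curve $E(-,t)$, the preimage of $\slq$ in $C_4[S^1] \cross I$ is a compact 1-manifold cobordism between $C^0_4[\gamma_0] \cap \slq$ and $C^0_4[\gamma] \cap \slq$.

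Finally, the $\Z/4\Z$ action by cyclic relabeling of vertices is free on $C^0_4[S^1]$ (any fixed point would require coinciding vertices), preserves $\slq$, and commutes with every map in sight, so it acts freely on the cobordism; we may pass to the quotient. In the quotient the ellipse contributes a single point and $\gamma$ contributes some finite set, and since a compact 1-manifold with boundary has an even number of boundary points, the quotient count for $\gamma$ is odd --- giving an odd number of inscribed square-like quadrilaterals on $\gamma$, as required. The main obstacle I expect is the uniform boundary-disjointness along the Haefliger isotopy: the intermediate curves $E(-,t)$ are $C^1$, but carefully showing that the ``no infinitesimal squares'' conclusion is uniform in $t$, so that the cobordism is genuinely compact and no intersection leaks into $\bdy C_4[\R^K]$, is the delicate point that ties the argument together.
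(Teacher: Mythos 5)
Your proposal follows essentially the same route as the paper: the non-circular ellipse as a transverse base case, boundary-disjointness of $\slq$ and $\cfog$ via the collinearity of infinitesimal configurations on a smooth curve, Theorem~\ref{thm:transversality-config} to achieve transversality, a Haefliger isotopy lifted to configuration space, and a mod-2 cobordism count in the $\Z/4\Z$-quotient. The one place to be slightly more careful is your $(1234)$-face argument: on deeper strata in the closure (e.g.\ where $\bfp_2,\bfp_3,\bfp_4$ collide faster than they approach $\bfp_1$) one can have $s_{124}\to 1$, so it is safer to argue, as the paper does, that \emph{every} boundary configuration of $\cfog$ is infinitesimally collinear while every boundary point of $\slq$ is a nondegenerate infinitesimal tetrahedron, rather than relying on the single ratio $s_{124}$.
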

This theorem is illustrated by the three squares inscribed in an irregular curve shown in Figure~\ref{fig:slq}.
\begin{figure}
\hfill
\includegraphics[width=1.25in]{ghostSlqleft.pdf}
\hfill
\includegraphics[width=1.25in]{ghostSlqcenter.pdf}
\hfill
\includegraphics[width=1.25in]{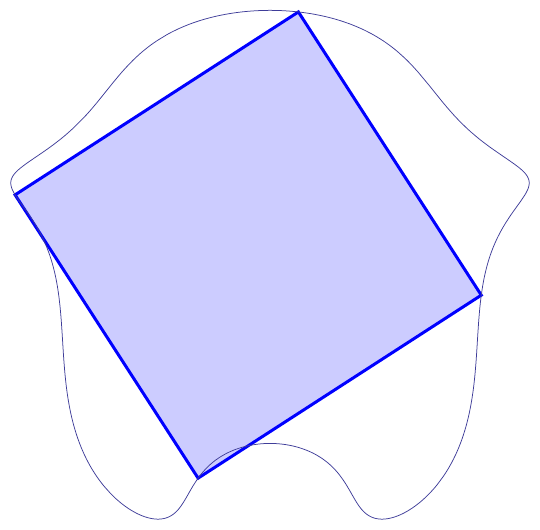}
\hfill
\includegraphics[width=1.25in]{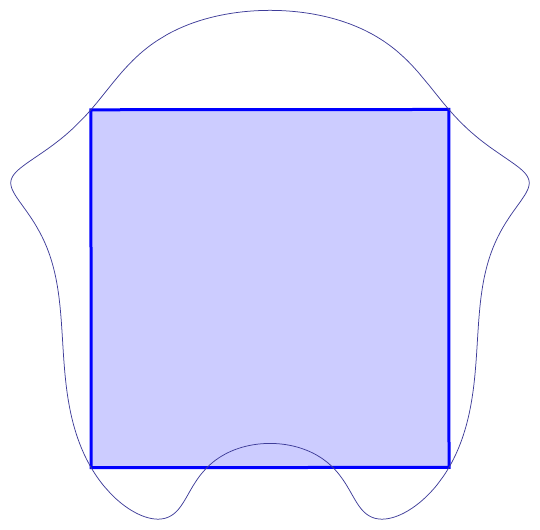}
\hfill
\hphantom{.}
\caption{This picture shows three of the five squares inscribed in an irregular three-lobed curve and two of the three squares inscribed in an irregular ``tooth-shaped'' curve. Since each family shares the vertical flip symmetry of each curve, we show the center (symmetric) square in the second and fourth pictures, while the first and third show half of the asymmetrical squares. While on the left curve the squares are fairly close together, a computer search reveals that they are certainly distinct.  
\label{fig:slq}}
\end{figure}

\begin{proof}
We want to compute the homology class in $H_0(\slq,\Z)$ of the intersection of $\cfog$ and $\slq$ for a transverse intersection. Unfortunately, while $\slq \cap \cfog$ is indeed $0$-dimensional, the intersection represents $0$ in the homology $H_0(\slq;\Z) = \Z$. The essential problem is that a squarelike quadrilateral can be cyclically relabeled in four ways, and it turns out that these relabelings alternate signs in $H_0(\slq;\Z)$. We can fix the problem by identifying these relabelings as a single configuration:

\begin{proposition}
The manifolds $\cfr$, $\cfog$, and $\slq$ share a smooth, free, and properly discontinuous $\Z/4\Z$ action given by cyclically relabeling points in a configuration. 
\begin{itemize}
\item The generator $(\bfp_1,\bfp_2,\bfp_3,\bfp_4) \mapsto (\bfp_2,\bfp_3,\bfp_4,\bfp_1)$ is always orientation-reversing on $\cfog$. It is orientation-reversing on both $\cfr$ and $\slq$ if $k$ is even and orientation preserving on $\cfr$ and $\slq$ if $k$ is odd.
\item The quotient spaces by the action of $\Z/4\Z$, $\qcfr$ and $\qcfog$, are manifolds with boundary and corners, with $\qcfog$ non-orientable and $\qcfr$ orientable as $k$ is odd or even. 
\item The intersection of $\qslq$ with the complement of an $\epsilon$-neighborhood of the boundary face $(13)(24)$ (which is preserved under the action), is a manifold with boundary. It is orientable precisely when $\qcfr$ is.
\end{itemize}
\end{proposition}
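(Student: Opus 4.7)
The plan is to address the three bullets in turn using standard theory: the first by direct computation of the action's differential, the second by quotients of free finite group actions, and the third by combining those with \prop{slq is submanifold of r2}.

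First, freeness and proper discontinuity. The map $\mu$ is smooth since cyclic relabeling extends continuously to all of $A_n[\R^k]$ (permuting the $\pi_{ij}$ and $s_{ijk}$ entries along with the $\bfp_i$) and is smooth on each face by \thm{functor}. For freeness on $\cfro$, any fixed point of a nontrivial cyclic power would force $\bfp_i = \bfp_j$ with $i \neq j$, contradicting distinctness. On the boundary strata of $\cfr$, the action cycles the codimension-one faces $(12) \to (23) \to (34) \to (41)$ and swaps the pairs $(12)(34) \leftrightarrow (23)(41)$; the only stratum preserved by the full $\Z/4\Z$ is $(13)(24)$, where a fixed point of any nontrivial element would force the two collision points to coincide, which is excluded. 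On $\cfog$, the stratum $(13)$ is empty because cyclic ordering along $\gamma$ cannot be preserved, and the same argument handles the rest. Proper discontinuity is automatic for a finite group.

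Next, orientation. For $\cfog$, the tangent space at a configuration decomposes as four copies of $T_{\bfp_i}\gamma \isom \R$, and $d\mu$ acts by the cyclic permutation $(v_1,v_2,v_3,v_4) \mapsto (v_2,v_3,v_4,v_1)$ of these one-dimensional blocks; this is a 4-cycle with determinant $-1$, so $\mu$ is orientation-reversing on $\cfog$ in every dimension. The orientation behavior of $\mu$ on $\cfr$ and on the smooth locus of $\slq$ is exactly the content of \prop{orientation}, which we invoke directly.

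Finally, the quotient statements. A free, properly discontinuous action of a finite group on a manifold with boundary and corners (respecting the stratification) yields a quotient that is again a manifold with boundary and corners, with the quotient map a local diffeomorphism on each stratum. For such quotients, orientability of the base is equivalent to orientation-preservation of the group action. Thus $\qcfr$ is orientable precisely when $k$ is even, and $\qcfog$ is non-orientable for every $k$. For $\slq$, \prop{slq is submanifold of r2} shows $\slq$ is a manifold with boundary only away from the $(13)(24)$ face (which, being preserved by $\mu$, descends to a well-defined locus in the quotient); excising an $\eps$-neighborhood leaves a manifold with boundary on which $\mu$ still acts freely, and the orientation assertion transfers from $\cfr$.

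The main obstacle throughout is the $(13)(24)$ face: it is the unique stratum invariant under the full $\Z/4\Z$, it is empty for $\cfog$ (which makes freeness easy there), it is a manifold point for $\cfr$ where freeness requires checking that no configuration with $\bfp_1 = \bfp_3$ and $\bfp_2 = \bfp_4$ is fixed by $\mu$, and it is genuinely singular for $\slq$, forcing the $\eps$-neighborhood removal in the third bullet.
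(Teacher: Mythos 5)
Your route is essentially the paper's: invoke \prop{orientation} for the orientation behavior on $\cfr$ and $\slq$, compute the sign of the block $4$-cycle directly on $\cfog$ (four one-dimensional blocks, odd permutation, determinant $-1$), and reduce the quotient statements to the standard facts about free, properly discontinuous actions of finite groups. Those parts are correct, and you supply more detail than the paper, which simply declares the action smooth, free and properly discontinuous and cites Sinha.

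The genuine gap is in your freeness argument on $\bdy\cfr$. You claim that $(13)(24)$ is the only stratum preserved by the full $\Z/4\Z$ and that a fixed point of any nontrivial element there would force the two collision points to coincide. Both claims are false. First, every stratum whose parenthesization contains the block $\{1,2,3,4\}$ --- in particular the codimension-one face $(1234)$ and its refinements --- is also invariant, and this is precisely where $\bdy\cfog$ lives; freeness there is what the second bullet actually requires, and your macroscopic-distinctness argument says nothing about it because on that face all four points coincide by definition. Second, on $(13)(24)$ the underlying tuple has $\bfp_1=\bfp_3$ and $\bfp_2=\bfp_4$ and is therefore literally fixed by $\mu^2$ without any further coincidence; distinctness of the two collision points does not rule out a fixed point of $\mu^2$. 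The repair is to argue in the compactified coordinates, which are defined on every stratum: since $\pi_{13}\circ\mu=\pi_{24}$ and $\pi_{24}\circ\mu=\pi_{31}=-\pi_{13}$, a fixed point of $\mu$ or $\mu^3$ forces $\pi_{13}=\pi_{24}=-\pi_{13}$, and a fixed point of $\mu^2$ forces $\pi_{13}=-\pi_{13}$ directly; a unit vector cannot equal its negative, so the action is free on all of $\cfr$, boundary faces included. One further caution: you invoke \prop{orientation} ``directly'' for the first bullet, but its parity convention (reversing for $k$ odd, preserving for $k$ even, consistent with the sign $(-1)^{k}$ of a block $4$-cycle on blocks of dimension $k$) is the transpose of the one printed in that bullet; the second and third bullets, and hence your quotient conclusions, agree with \prop{orientation}.
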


\begin{proof}
It is easy to see that this action on $\cfr$ is smooth, free and properly discontinuous and that it descends to a corresponding action on the submanifolds $\cfog$ and $\slq$ (cf. Theorem~4.2 of~\cite{newkey119}). The second point was proved in Proposition~\ref{prop:orientation} when we proved that $\slq$ was a submanifold of $\cfr$. The other points are easy consequences. We note for the third point that the action is actually an isometry on $\cfg$, so it does descend to the $\epsilon$-neighborhood of $(13)(24)$ as needed.
\end{proof}

We now prove:

\begin{proposition}
In $\R^2$, if $\gamma$ is a planar ellipse $\displaystyle \nicefrac{x^2}{a^2} + \nicefrac{y^2}{b^2} = 1$ with $a^2 \neq b^2$, $\qcfog \transverse \qslq$ and the intersection represents a single square.
\end{proposition}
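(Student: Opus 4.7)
The plan has two parts: locate the intersection explicitly using the symmetries of the ellipse, then verify transversality at the unique intersection by a direct tangent-space calculation. First I would classify all inscribed squares by parametrizing a candidate by its center $(h,k)$, orientation angle $\alpha$, and half-diagonal $r$, so the vertices are $(h+r\cos\alpha, k+r\sin\alpha)$ and its three $90^\circ$ rotates about $(h,k)$. Demanding that each vertex satisfies $x^2/a^2 + y^2/b^2 = 1$ yields four equations. Subtracting the equations for each pair of diagonally opposite vertices gives the linear system
\begin{equation*}
\begin{pmatrix} \cos\alpha & \sin\alpha \\ -\sin\alpha & \cos\alpha \end{pmatrix}\begin{pmatrix} h/a^2 \\ k/b^2 \end{pmatrix} = \begin{pmatrix} 0 \\ 0 \end{pmatrix},
\end{equation*}
whose coefficient matrix is a rotation, forcing $h=k=0$. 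Taking the difference of the remaining sum equations then yields $r^2\cos 2\alpha\,(b^2-a^2)/(a^2b^2)=0$, and the hypothesis $a^2\neq b^2$ forces $\alpha \equiv \pi/4 \pmod{\pi/2}$. These four $\alpha$-values are the four cyclic relabelings of a single inscribed square with vertices $(\pm c,\pm c)$, $c^2 = a^2b^2/(a^2+b^2)$, so the inscribed square is unique up to the $\Z/4\Z$-action.

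Since $\cfog$ and $\slq$ are both $4$-dimensional submanifolds of the $8$-dimensional $\cfrt$, establishing $\cfog \transverse \slq$ at the inscribed configuration $\p$ amounts to showing $T_\p \cfog \cap T_\p \slq = \{0\}$. A tangent vector to $\slq$ (the space of planar squares) is generated by an infinitesimal translation, rotation, and central scaling, so it has the form $\bfv_i = \mathbf{t} + \omega J\bfp_i + \lambda\bfp_i$ with $\mathbf{t}\in\R^2$ and $\omega,\lambda\in\R$, where $J$ is the $90^\circ$ rotation and I have used that $\p$ is centered at the origin. A tangent vector to $\cfog$ requires each $\bfv_i$ to be a scalar multiple of the ellipse tangent at $\bfp_i$, which at $(x_i,y_i)$ is proportional to $(-a^2 y_i,\, b^2 x_i)$. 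Equating the two descriptions yields four planar proportionality conditions, linear in $(\mathbf{t},\omega,\lambda)$. Pairing vertices by the antipodal involution $\bfp_i \mapsto -\bfp_i$ and taking sums and differences, the system decouples: the ``odd'' part forces $\omega = 0$ using $a^2\neq b^2$, and the ``even'' part then forces $\lambda = 0$ and $\mathbf{t}=0$.

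The main obstacle is merely the linear-algebra bookkeeping in the second step, but the antipodal symmetry of the inscribed square makes the system decouple cleanly. The necessity of the hypothesis is transparent: if $a=b$ the ellipse is a circle, the inscribed squares form a $1$-parameter $SO(2)$-family, and transversality must fail. Combining uniqueness and transversality, $\cfog \cap \slq$ is a single $\Z/4\Z$-orbit of labeled configurations, so $\qcfog \cap \qslq$ consists of a single point, as claimed.
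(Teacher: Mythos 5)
Your proposal is correct, and both halves take a genuinely different route from the paper. For uniqueness, the paper invokes the classical fact that midpoints of parallel chords of an ellipse lie on a diameter to force the square's center to the origin, and then argues via the flip symmetries; you instead parametrize a candidate square by $(h,k,\alpha,r)$ and extract $h=k=0$ and $\cos 2\alpha=0$ directly from differences of the four incidence equations. Your computation is more self-contained and makes the role of $a^2\neq b^2$ equally visible; the paper's is shorter given the chord lemma. For transversality, the paper realizes $\slq$ as $f^{-1}((1,1,1,1),0)$ for an explicit map built from the $s_{ijk}$ and shows $Df$ restricted to $T_\p\cfog$ is onto, computing the images of the $d\theta_i$; you instead use that the two submanifolds have complementary dimension in $\cfrt$, so transversality is equivalent to $T_\p\cfog\cap T_\p\slq=\{0\}$, and you exploit that for $k=2$ the tangent space to $\slq$ is exactly the infinitesimal similarities $\bfv_i=\mathbf{t}+\omega J\bfp_i+\lambda\bfp_i$. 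Intersecting with the ellipse-tangency conditions and decoupling via the antipodal symmetry of the square is clean linear algebra (I checked: the normal-component equations at antipodal pairs give $\mathbf{t}\cdot\mathbf{n}_1=\mathbf{t}\cdot\mathbf{n}_2=0$, hence $\mathbf{t}=0$, while the symmetric combinations give $\lambda=0$ and $\omega\,c^2(1/b^2-1/a^2)=0$, hence $\omega=0$). Your description of exactly which of $\omega,\lambda,\mathbf{t}$ is killed by the ``odd'' versus ``even'' part is slightly off in the bookkeeping, but the decoupling works as claimed. Your approach is more elementary but tied to the planar identification of $\slq$ with the similarity orbit of a square; the paper's preimage formulation is the one that generalizes to its treatment of $\slq$ for $k>2$ and of $\simpR$. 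One small point you pass over, which the paper addresses explicitly: descending transversality from $\cfog\transverse\slq$ to the quotients $\qcfog\transverse\qslq$ uses that the $\Z/4\Z$ action is free and properly discontinuous and that transversality is local; you should say a word about this rather than only counting the orbit.
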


\begin{proof}
We will need a lemma:

\begin{lemma}\label{lem:parallelchords}
Parallel chords meeting an ellipse have midpoints on a line through the center of the ellipse (where the major and minor axes meet).
\end{lemma}
\begin{proof} 
This is true for a circle and preserved under affine mappings.
\end{proof}

We prove that the intersection is a single square. First, if we intersect the ellipse with the lines $y = \pm x$, by symmetry the intersection points form a square. If we parametrize the ellipse by $(x(\theta),y(\theta)) = (a \cos \theta, b \sin\theta)$ we can work out that $\cos^2 \theta = b^2/(a^2 + b^2)$ and $\sin^2\theta = a^2/(a^2 + b^2)$. We prove that this is the only square inscribed in the ellipse.

Suppose $ABCD$ is any square inscribed in the ellipse. Let $M$ denote the midpoint of $AB$
and~$N$ denote the midpoint of $CD$. Then, by \lem{parallelchords}, $MN$ passes
through the center $O$ of the ellipse. Similarly, if $K$ denotes the midpoint of $AD$ and
$L$ the midpoint of $BC$, then $KL$ passes through~$O$. Thus $O$ is also the center of the square.
Parametrize the ellipse by $\theta\mapsto (a\cos \theta, b\sin\theta)$. Then write
\begin{equation*}
A = (a\cos\alpha, b\sin\alpha), \quad B = (a\cos\beta, b\sin \beta).
\end{equation*}
The segment $OM$ is perpendicular to $AB$ and so $\triangle OAM$ and $\triangle OBM$
are congruent and $OA\cong OB$. Thus
\begin{equation*}
a^2 \cos^2\alpha + b^2 \sin^2 \alpha = a^2\cos^2 \beta + b^2\sin^2\beta.
\end{equation*}
This implies $(a^2 - b^2) \cos^2 \alpha = (a^2 - b^2)\cos^2 \beta$ and so, since $a\neq b$, we know $\cos \alpha
= \pm \cos\beta$. Similarly, $\sin \alpha = \pm \sin \beta$. This means that $B$ is the image of $A$ under a symmetry of the ellipse, and since the same argument works \textit{mutatis mutandis} for $C$ and $D$, the square is symmetric under the flip symmetries of the ellipse. There are two types of inscribed quadrilaterals with this symmetry: inscribed rectangles in the form $(\pm x, \pm y)$, and the ``exceptional'' rhombus $\{(\pm a,0),(0,\pm b)\}$. Since $a \neq b$, the only square is our previous set of 4 points $(\pm ab/\sqrt{a^2 + b^2}, \pm ab/\sqrt{a^2+b^2})$.

We now prove that the intersection of $\qslq$ and $\qcfog$ is transverse for the ellipse. We note that $\qcfog$ is always far from the $(12)(34)$ face of $\qcfr$, so $\qslq$ is a manifold at these points. It suffices to prove transversality for $\cfog$ and $\slq$.

\begin{figure}[h]
\hfill
\begin{overpic}[height=1.5in]{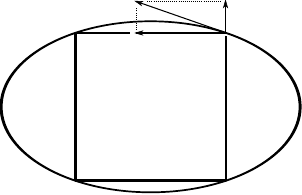}
\put(76,56){$x_1$}
\put(20,56){$x_2$}
\put(20,1){$x_3$}
\put(76,1){$x_4$}
\end{overpic}
\hfill
\begin{overpic}[height=1.5in]{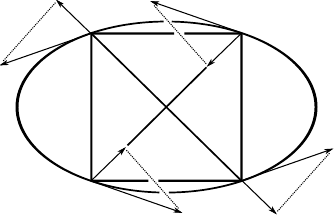}
\put(74,56){$x_1$}
\put(18,55){$x_2$}
\put(22,5){$x_3$}
\put(71,4){$x_4$}
\end{overpic}
\hfill
\hphantom{.}
\caption{These figures show that for a non-circular ellipse, $\cfog \transverse \slq$ at their intersections along the unique square inscribed in the ellipse. On the left, we see the effect of moving only $x_1 = (a \cos \theta_1,b \sin \theta_1)$ along the ellipse on the sides of the quadrilateral. This motion increases $\norm{x_4 - x_1}$ while decreasing $\norm{x_1 - x_2}$ and a calculation shows that it changes the edgelength ratio vector $(s_{142},s_{213},s_{324},s_{431})$ by a positive scalar multiple of $(a^2 + b^2, -a^2, 0, -b^2)$. On the right, we see the effect of moving all the $x_i$ simultaneously on the diagonals of the quadrilateral. The motion is decreasing the diagonal $\norm{x_1 - x_3}$ while increasing the diagonal $\norm{x_2 - x_4}$ and a calculation shows the difference of ratios $s_{132} - s_{241}$ decreases to first order.
\label{fig:ellipsetrans}}
\end{figure}

We will now write $\slq$ as the inverse image of $((1,1,1,1),0)$ under the map $f:\cfr \rightarrow \{xyzw = 1\} \subset \R^4 \cross \R$ given by $(s_{142},s_{213},s_{324},s_{431}) \cross (s_{132} - s_{241})$ and show that $\cfog$ is transverse to $\slq$ in $\cfr$ by showing that $f$ restricted to $\cfog$ is transverse to $((1,1,1,1),0)$.

Consider the effect of moving $x_1$ along the ellipse as shown on the left hand side of Figure~\ref{fig:ellipsetrans}. We saw above that this point is $(a \cos \theta_1,b \sin \theta_1)$ where $\cos \theta_1 = b/\sqrt{a^2 + b^2}$ and $\sin \theta_1 = a/\sqrt{a^2 + b^2}$, so the tangent vector to the ellipse $d\theta_1$ is $(-a^2/\sqrt{a^2 + b^2},b^2/\sqrt{a^2 + b^2})$. We can then compute the image of $d\theta_1$ under the differential of $(s_{142},s_{213},s_{324},s_{431})$ to be a positive scalar multiple (multiply by $2ab$) of 
\begin{equation*}
\vec{v}_1 = \left( a^2 + b^2, -a^2, 0, -b^2 \right)
\end{equation*}
Similarly, $d\theta_2$ and $d\theta_3$ are scalar multiples of cyclic permutations of $\vec{v}_1$. The Gram matrix of these vectors has determinant $4(a^6 + a^4 b^2 + a^2 b^4 + b^6)^2 \neq 0$. This shows that on $\cfog$, the differential $Df$ is onto the 3-dimensional tangent space to $\{xyzw = 1\} \subset \R^4$. 

We compute the image of $d\theta_1$ under the differential of $s_{132} - s_{241}$. If we use the facts that the sides and diagonals of the square are equal, this differential simplifies to a positive multiple of the derivative of the diagonal $\norm{x_1 - x_3}$, which can be written $\lambda^2 (b^2 - a^2)$. Tracking through what happens as we permute, we see that all the $d\theta_i$ are equal. Summing as in the right-hand side of Figure~\ref{fig:ellipsetrans} we see that this derivative does not vanish, so $s_{132} - s_{241} \transverse \{0\}$ at these points on $\cfog$. Together, we have proved that $f \transverse ((1,1,1,1),0)$ and hence that $\cfog \transverse \slq$. We conclude that the quotients $\qcfog$ and $\qslq$ are transverse as well.
\end{proof}

We can now complete the proof of Theorem~\ref{thm:squarepeg}. Given a $C^1$ curve in $\R^k$ we can find a nearby smooth curve $\gamma$. We claim that $\slq$ and $\cfog$ are boundary-disjoint in $\cfr$. Since $\cfog$ does not contact the $(13)(24)$ or $((13)(24))$ faces of $\cfr$, we need only consider the portion of $\slq$ on the interior of the $(1234)$ face. These configurations are infinitesimal tetrahedra with equal sides and equal diagonals. However, configurations on the $(1234)$ face of $\bdy \cfog$ are infinitesimally collinear configurations since $\gamma$ is smooth! This means that they have $\pi_{ij}$ and $s_{ijk}$ data very different from that of configurations in $\bdy\slq$.

We apply Theorem~\ref{thm:transversality-config} to perturb that smooth curve to a $C^1$-close curve $\eta$ with $C_4^0[\eta] \transverse \slq$. As transversality is a local property and the action of $\Z/4\Z$ is smooth, free, and properly discontinuous, this implies that $\hat{C}_4^0[\eta] \transverse \qslq$ as well.
As before, Haefliger's Theorem~\ref{thm:isotopy} guarantees a differentiable isotopy between the ellipse and $\eta$, and we can lift the isotopy to $\qcfr$, perturbing it without changing the ends so that it is transverse to $\qslq$ everywhere. This means that the finite collection of points (0-manifold) $\qcfog \cap \qslq$ is cobordant by a 1-manifold to the single square in the initial ellipse in $\qslq$, and hence that the number of inscribed squares is odd.
\end{proof}

A few historical comments are in order here. First, this is certainly not the first proof of the square-peg theorem to use an intersection-theoretic approach. Griffiths~\cite{MR1095236} took a similar approach, though he seems to have failed to appreciate the orientation-reversing nature of the cyclic permutation on $\cfr$. As a result, he (wrongly) computes a different intersection number to be 16 instead of zero, and claims as a result to have proved not only the square-peg theorem but a ``rectangular-peg theorem''. The rectangular case does not admit the quotient-space simplification above (there are generally \emph{two} inscribed rectangles of a given aspect ratio in the ellipse). As far as we know, the ``rectangular-peg theorem'' is an open and difficult problem. Matschke~\cite{2010arXiv1001.0186M} proved a version of the square-peg theorem from a theorem about loops of polygons inscribed in curves by arguing that a loop of rhombi which was invariant under the cyclic permutation contained a square by the intermediate value theorem, also an approach followed by 
Schnirel'man~\cite{vonSchnirelman:1944wf}. 

\subsection{Generic spheres have inscribed simplicies}

In this section, we explore a sort of reverse version of our basic framework. Previously, we used Haefliger's theorem to construct a map $E: S^l \cross I \rightarrow \R^k$ encoding the isotopy between our initial and target spheres that was transverse to $Z$ at both ends. But Haefliger's theorem really gives us a collection of diffeomorphisms $F_t$ of $\R^k$ parametrized by $t$ so that $F_0$ is the identity and $F_1$ maps our initial $S^l$ to the target $S^l$ and the compositions of $F_0$ and $F_1$ with our standard embedding were transverse to $Z$. Now we note that this construction works in reverse: Composing the inclusion $Z \hookrightarrow \cnr$ with the family $F_t^{-1}$ we get a family of maps $E:Z \cross I \rightarrow \cnr$ so that $E(-,0)$ and $E(-,1)$ are transverse to $C_n[S^l]$. Running through the rest of our standard argument, we see that $C_n[S^l] \cap Z$ and $C_n[\gamma'] \cap Z$ are cobordant in $Z \cross I$ and hence represent the same homology class in $Z$. 

For instance, if we let $Z = \text{equilateral triangles in $\R^2$}$, we could compute $H_1(Z;\Z) \simeq \Z$, because $Z$ deformation retracts to $S^1$, and then show that the submanifold of inscribed equilateral triangles in a curve represents $+1$ in $H_1(Z;\Z) = \Z$. We now prove a more general version of that theorem for inscribed simplices in higher-dimensional spheres, such as the inscribed regular tetrahedron in the irregular surface of Figure~\ref{fig:tetra}. To do so, recall that we showed in Proposition~\ref{prop:simpR is submanifold} that the space $\simpR$ of simplices in $\R^k$ with vertex-vertex distances in any nondegenerate, constructible ratio (cf. Definition~\ref{def:constructible}) is a submanifold of $C_{k+1}[\R^k]$ homotopic to $O(k)$. Also recall that $\simpR^+$ is the set of configurations in $\simpR$ where the matrix with columns $\pi_{1(k+1)}, \dots, \pi_{k(k+1)}$ has positive determinant.

\begin{figure}
\hfill
\includegraphics[height=1.65in]{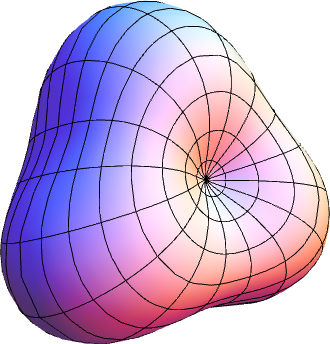}
\hfill
\includegraphics[height=1.65in]{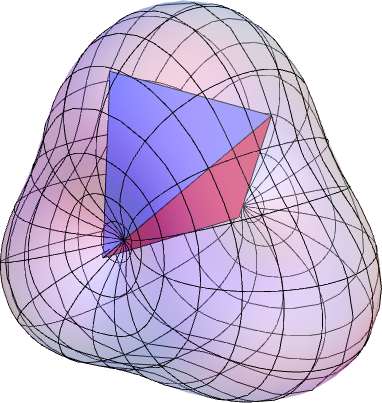}
\hfill
\includegraphics[height=1.65in]{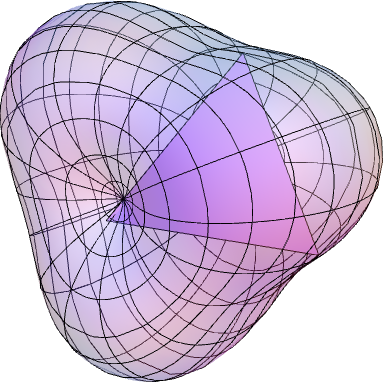}
\hfill
\hphantom{.}
\caption{On the left, we see an irregular embedding of $S^2$ in $\R^3$ described in spherical coordinates as a graph over the unit sphere by the function $r(\phi,\theta) = 1 + \sin^3\phi \sin 3\theta/5 - |\cos^7\phi|$. The center and right images show different views of a single regular tetrahedron inscribed in this surface with edgelengths close to $1.15$. If this embedding of $S^2$ is transverse to the submanifold of regular tetrahedra, this tetrahedron is a member of the 3-dimensional family of inscribed regular tetrahedra predicted by Theorem~\ref{thm:simplices}. This tetrahedron was found by computer search. Its vertices have spherical $(\phi,\theta)$ coordinates $(0.224399, 0.224399), (1.5708, 3.36599), (1.5708, 2.0196), (2.91719, 0.224399)$.
\label{fig:tetra}}
\end{figure}


\begin{theorem}[Inscribed Simplex Theorem]
For any $C^1$  embedding of $S^{k-1}$ in $\R^k$ and any nondegenerate, constructible simplex distance ratio $R$, there is a $C^1$-close embedding $\gamma$ so that $\simpR^+ \cap\, C_{k+1}[\gamma]$ is a 
smooth, orientable $k(k-1)/2$-dimensional manifold. Further, the projection $\pi:\simpR^+ \rightarrow SO(k)$ induces the map 
\begin{equation*}
\pi_* : H_{k(k-1)/2}(\simpR^+ \cap\, C_{k+1}[\gamma];\Z) \simeq \Z \rightarrow H_{k(k-1)/2}(SO(k);\Z) \simeq \Z, \qquad \pi_*(+1) = +1.
\end{equation*}
In particular, given a standard simplex $\Delta$ with distance ratio $R$ and any element $A \in SO(k)$, there is a scale and translation so that the scaled, translated copy of $A\Delta$ is inscribed in $\gamma$.
\label{thm:simplices}
\end{theorem}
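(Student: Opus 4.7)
The plan is to follow the four-step cookbook of this section with base sphere the round unit $S^{k-1} \subset \R^k$ and target submanifold $Z = \simpR^+$. I first want to identify the intersection $\simpR^+ \cap C_{k+1}[S^{k-1}]$ explicitly. Any nondegenerate simplex with distance ratio $R$ has a unique circumscribed sphere, so after rescaling it to circumradius $1$ and placing the circumcenter at the origin, the space of ordered orientation-preserving inscriptions in the unit sphere is a single $\SO(k)$-orbit of one base configuration. As a subset of $\simpR^+$ this is a compact, closed $k(k-1)/2$-submanifold diffeomorphic to $\SO(k)$ and sitting entirely in the interior of $C_{k+1}[\R^k]$.

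Next I would prove transversality at the base case by computing $T_\p \simpR^+ \cap T_\p C_{k+1}[S^{k-1}]$ at any $\p$ in the intersection. A vector in both tangent spaces corresponds to an infinitesimal similarity of $\R^k$ that preserves $S^{k-1}$ at each vertex. Writing such a motion as $\bfp \mapsto (X + sI)\bfp + \bfv$ with $X \in \mathfrak{so}(k)$, $s \in \R$, $\bfv \in \R^k$, and differentiating $|\bfp_i|^2 = 1$ (using $\bfp_i \cdot X\bfp_i = 0$) yields $s + \bfp_i \cdot \bfv = 0$ at each vertex. Because the vertices of a nondegenerate inscribed simplex are affinely independent, subtracting these equations forces $\bfv = 0$ and then $s = 0$, leaving only the $k(k-1)/2$-dimensional space of infinitesimal rotations. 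A dimension check then gives $T_\p \simpR^+ + T_\p C_{k+1}[S^{k-1}] = T_\p C_{k+1}[\R^k]$, so the intersection is transverse. Using the equivariance $\operatorname{GS}(A \cdot M) = A \cdot \operatorname{GS}(M)$ established in Proposition~\ref{prop:simpR is submanifold}, the Gram-Schmidt projection $\pi \co \simpR^+ \to \SO(k)$ acts on the intersection as a right translation in $\SO(k)$, which is an orientation-preserving diffeomorphism; hence $\pi_*$ carries the fundamental class of the base-case intersection to $+1 \in H_{k(k-1)/2}(\SO(k);\Z)$.

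The boundary-disjointness required by Theorem~\ref{thm:transversality-config} now comes essentially for free. By Proposition~\ref{prop:simpR is submanifold}, $\bdy \simpR^+$ sits in the $(1 \cdots k+1)$ stratum of $\bdy C_{k+1}[\R^k]$ and consists of infinitesimal similar copies of the simplex, whose pairwise directions $\pi_{ij}$ span all of $\R^k$. For any smooth $\gamma$, every configuration in the full-collision stratum of $\bdy C_{k+1}[\gamma]$ has its directions of collision in a single tangent hyperplane $T_\bfp \gamma$ of dimension $k-1$, so cannot be an infinitesimal nondegenerate simplex; outside the full-collision stratum, $\bdy \simpR^+$ is empty. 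Thus, after first approximating the given $C^1$ sphere by a smooth embedding, Theorem~\ref{thm:transversality-config} yields a $C^1$-close smooth embedding $\gamma'$ with $C_{k+1}[\gamma'] \transverse \simpR^+$.

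Finally, Haefliger's Theorem~\ref{thm:isotopy} gives a smooth isotopy from the unit sphere to $\gamma'$ inside some $\R^K \supset \R^k$, and lifting this isotopy to configuration space (made transverse to $\simpR^+$ throughout by a further generic perturbation, using boundary-disjointness at every intermediate stage) exhibits $\simpR^+ \cap C_{k+1}[\gamma']$ as oriented-cobordant to $\simpR^+ \cap C_{k+1}[S^{k-1}]$ inside $\simpR^+$. Their fundamental classes therefore agree in $H_{k(k-1)/2}(\simpR^+;\Z)$, so $\pi_*[\simpR^+ \cap C_{k+1}[\gamma']] = +1$. A degree-$+1$ map between closed oriented manifolds of the same dimension is surjective, and so every $A \in \SO(k)$ is realized as the $\SO(k)$-coordinate of some $\p \in \simpR^+ \cap C_{k+1}[\gamma']$, yielding the final inscription statement. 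The main obstacle I foresee is the bookkeeping of orientations needed to certify the degree as $+1$ (rather than merely $\pm 1$): the orientation on $\simpR^+$ coming from its product structure, the orientation on $C_{k+1}[\gamma']$ coming from a framing of $\gamma'$, and the transverse-intersection orientation must all be reconciled with the Gram-Schmidt-based identification $\simpR^+ \simeq \SO(k) \times \R^k \times [0,\infty)$, in the same spirit as the orientation analysis carried out in Proposition~\ref{prop:orientation}.
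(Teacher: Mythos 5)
Your base-case identification (unique circumsphere, so the intersection is a single $SO(k)$-orbit), your boundary-disjointness argument (collision directions of a smooth $\gamma$ lie in a tangent hyperplane, while those of an infinitesimal nondegenerate simplex span $\R^k$), and your transversality computation all agree in substance with the paper; for the last of these the paper instead shows that the translation-plus-scaling directions in $T_{\p}\simpR^+$ surject onto the normal space of $C_{k+1}[S^{k-1}]$ via a determinant that is nonzero by affine independence, which is equivalent to your intersection-of-tangent-spaces count.

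The genuine gap is in the final cobordism step. You run the argument in the ``forward'' direction: lift the Haefliger isotopy of spheres to a homotopy of $C_{k+1}[S^{k-1}]$ through $C_{k+1}[\R^K]$ and take the preimage of $\simpR^+$. But that preimage lives in $C_{k+1}[S^{k-1}]\times I$, not in $\simpR^+$, and the intermediate configurations are points of $C_{k+1}[\R^K]$ that are not similar copies of the model simplex in $\R^k$; indeed $\simpR^+$ has codimension $K(k+1)-k(k-1)/2-k-1$ in $C_{k+1}[\R^K]$, which exceeds $\dim\bigl(C_{k+1}[S^{k-1}]\times I\bigr)=k^2$, so the ends are not even transverse to $\simpR^+$ as a submanifold of the big ambient space and the homotopy cannot be made transverse to it rel its ends. (Enlarging $Z$ to the simplices with ratio $R$ in $\R^K$, as the square-peg proof implicitly does for $\slq$, would replace $O(k)$ by a Stiefel manifold and destroy the homology computation.) Consequently your claim that the two intersections are ``oriented-cobordant inside $\simpR^+$'' does not follow, and that is exactly what is needed to compare classes in $H_{k(k-1)/2}(\simpR^+;\Z)$ before pushing forward by $\pi$. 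The paper explicitly flags the fix as the new step of this theorem: invert the construction. Haefliger gives an ambient isotopy $F_t$ of $\R^K$ carrying the round sphere to $\gamma$; composing the inclusion of $\simpR^+$ with $C_{k+1}[F_t^{-1}]$ yields a family $E\colon\simpR^+\times I\to C_{k+1}[\R^K]$ whose ends are transverse to the fixed $C_{k+1}[S^{k-1}]$, and $E^{-1}(C_{k+1}[S^{k-1}])$ is a cobordism in $\simpR^+\times I$ that projects into $\simpR^+$ itself. With that replacement, the rest of your argument (including the degree and surjectivity conclusion) goes through.
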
 

To get a sense of the meaning of this theorem, it tells us that any $C^1$ embedding of the sphere in $\R^3$ is $C^1$-close to an embedding with a 3-dimensional family of inscribed regular tetrahedra. Since the space of inscribed quadruples in a sphere is eight dimensional and the regularity of the tetrahedron is encoded by a specific ratio among six pairwise distances between vertices (a codimension five constraint), this result has at least the correct dimension (though it may be surprising that there is an entire $\SO(3)$ of inscribed tetrahedra in such a sphere!).

\begin{proof}
As before, we will follow our standard pattern: establish a base case and a modification of the given embedding that ensures a transverse intersection using boundary-disjointness of the two submanifolds of $C_{k+1}[\R^k]$, use Haefliger's theorem to find an isotopy, and use transversality to complete the proof. 

\begin{proposition}
If $S^{k-1}$ is the unit $(k-1)$-sphere, $\simpR^+ \transverse C_{k+1}[S^{k-1}]$ with $\pi\co (\simpR^+) \cap C_{k+1}[S^{k-1}] \rightarrow SO(k)$ a diffeomorphism.
\end{proposition}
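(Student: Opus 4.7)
I plan to handle the two claims separately: first identify the intersection with $SO(k)$ via $\pi$, and then verify transversality pointwise by a short annihilator computation. Throughout I will use the coordinates $(O,\bfp_1,\ell)\in SO(k)\times\R^k\times[0,\infty)$ on $\simpR^+$ supplied by the diffeomorphism $f$ of \prop{simpR is submanifold}. The geometric key is that every nondegenerate simplex has a unique circumscribed sphere, so any $\p\in\simpR^+$ inscribed in $S^{k-1}$ must have that circumsphere equal to $S^{k-1}$ itself, that is, circumcenter $0$ and circumradius $1$. Writing each vertex as $\bfp_i=\bfp_1+\ell\,Ow_i$ for fixed $w_i\in\R^k$ determined by $R$ and the Gram--Schmidt convention, the circumradius takes the form $\lambda_R\ell$ and the circumcenter takes the form $\bfp_1+\ell\,Ov_R$ for specific $\lambda_R>0$ and $v_R\in\R^k$ depending only on $R$. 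The inscribed conditions thus collapse to $\ell=1/\lambda_R$ and $\bfp_1=-Ov_R/\lambda_R$, so the map $O\mapsto f^{-1}(O,-Ov_R/\lambda_R,1/\lambda_R)$ is a smooth inverse to $\pi$.

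\textbf{Transversality at a point $\p$.} It is enough to show that the common annihilator of $T_\p\simpR^+$ and $T_\p C_{k+1}[S^{k-1}]$ in $(T_\p C_{k+1}[\R^k])^*\cong(\R^k)^{k+1}$ is trivial. Since $T_\p C_{k+1}[S^{k-1}]=\{(\bfv_1,\dots,\bfv_{k+1}):\bfp_i\cdot\bfv_i=0\text{ for all }i\}$, its annihilator consists exactly of the functionals
\[
\psi_\lambda(\bfv_1,\dots,\bfv_{k+1}) \;=\; \sum_{i=1}^{k+1}\lambda_i\,(\bfp_i\cdot\bfv_i),\qquad \lambda=(\lambda_1,\dots,\lambda_{k+1})\in\R^{k+1}.
\]
I evaluate $\psi_\lambda$ on three shape-preserving families that together span $T_\p\simpR^+$, one for each factor in \prop{simpR is submanifold}: translations $\bfv_i=\bfu$, dilations from the origin $\bfv_i=\bfp_i$, and infinitesimal rotations about the origin $\bfv_i=A\bfp_i$ with $A\in\mathfrak{so}(k)$. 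Translations give $\sum_i\lambda_i\bfp_i=0$; dilations give $\sum_i\lambda_i|\bfp_i|^2=\sum_i\lambda_i=0$ (using $|\bfp_i|=1$); and rotations impose no new relation since $\bfp_i\cdot A\bfp_i=0$ for skew $A$. These $k+1$ equations on $\lambda$ have coefficient matrix whose columns are the augmented vectors $(\bfp_i,1)\in\R^{k+1}$, which is invertible precisely when the $\bfp_i$ are affinely independent in $\R^k$ --- and this is guaranteed by nondegeneracy of $R$. Hence $\lambda=0$ and the transversality follows.

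\textbf{Main obstacle.} The one point requiring justification is that the three families really span $T_\p\simpR^+$. This is immediate from \prop{simpR is submanifold}: translations move $\bfp_1$, dilations about $\bfp_1$ vary the edgelength $\ell$, and rigid rotations about $\bfp_1$ vary $O\in SO(k)$, so together they parametrize the three tangent factors $\R^k\times[0,\infty)\times SO(k)$ at $\p$. Shifting the center of the dilation or rotation from $\bfp_1$ to the origin only alters each variation by a translation already in the span, so the origin-centered versions used above also span and the annihilator computation closes off cleanly.
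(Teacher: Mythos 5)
Your proposal is correct and follows essentially the same route as the paper: the diffeomorphism claim via the uniqueness of the circumscribed sphere (pinning down scale and circumcenter, leaving only the $SO(k)$ factor), and transversality via the same translation-plus-dilation computation, which in both versions reduces to the invertibility of the $(k+1)\times(k+1)$ matrix with columns $(\bfp_i,1)$, guaranteed by affine independence of a nondegenerate simplex. The only cosmetic difference is that you phrase the linear algebra dually (triviality of the common annihilator) where the paper writes the normal space to $C_{k+1}[S^{k-1}]$ explicitly and checks the determinant directly.
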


\begin{proof}
We need another useful fact from distance geometry:
\begin{theorem}[Proposition 9.7.3.7 \cite{berger1987geometry}]
A simplex $\bfx = (\bfx_1, \dots, \bfx_{k+1}) \in C_{k+1}[\R^k]$ with pairwise distances $d_{ij}$ and $D(d_{11},\dots,d_{k+1,k+1}) > 0$ is inscribed in a unique $(k-1)$-sphere of radius $\rho(d_{11}, \dots, d_{k+1,k+1})$ where
\begin{equation}
\rho^2 = - \frac{
\left|
\begin{matrix}
0 & d_{12}^2 & \cdots & d_{1,k+1}^2 \\
d_{21}^2 & 0 & \cdots & d_{2,k+1}^2 \\
\vdots & \vdots & & \vdots \\
d_{k+1,1}^2 & d_{k+1,2}^2 & \cdots & 0 
\end{matrix}
\right|
}
{2 D(d_{11},\dots,d_{k+1,k+1})}
\label{eq:circumradius}
\end{equation}
\label{thm:circumradius}
\end{theorem}
Given any $\bfx \in \simpR^+ \cap\, C_{k+1}[S^{k-1}]$, the theorem immediately implies that the scale and position (of the circumcenter) of $\bfx$ are fixed, while the orientation of $\bfx$ is given uniquely by an element of $SO(k)$, proving the second half of the theorem. 

Proving transversality is more interesting. For $\bfx \in C_{k+1}[S^{k-1}]$, the orthogonal complement of $T_\bfx C_{k+1}[S^{k-1}]$ in $T_\bfx C_{k+1}[\R^{k}]$ is the $(k+1)$-dimensional space with orthonormal basis $\mathcal{B} = \{(\bfx_1, 0, \dots, 0), \dots, (0,\dots,\bfx_{k+1})\}$. The tangent space $T_\bfx \simpR^+$ contains the vectors $(\bfe_1, \dots, \bfe_1), \dots, (\bfe_k,\dots,\bfe_k)$ from the translational component of $\simpR^+$ as well as the vector $(\bfx_1,\dots,\bfx_{k+1})$ from scaling the configuration $\bfx$. Writing these vectors in the basis $\mathcal{B}$, we get the matrix:
\begin{equation*}
M = \left( 
\begin{matrix}
x_{1,1} & x_{2,1} & \cdots & x_{k+1,1} \\
x_{1,2} & x_{2,2} & \cdots & x_{k+1,2} \\
\vdots  & \vdots  &        &   \vdots  \\
x_{1,k} & x_{2,k} & \cdots & x_{k+1,k} \\
1       & 1       & \cdots & 1         \\
\end{matrix}
\right).
\end{equation*}
Subtracting the last column from the rest, we get 
\begin{equation*}
M' = \left( 
\begin{matrix}
\bfx_1 - \bfx_{k+1} & \bfx_2 - \bfx_{k+1} & \cdots & \bfx_k - \bfx_{k+1} & \bfx_{k+1} \\
0             & 0             & \cdots & 0             & 1       \\
\end{matrix}
\right).
\end{equation*}
The determinant of this matrix is $\pm 1$ multiplied by the determinant of the upper-left $k \cross k$ principal minor. But that determinant is positive because $\bfx \in \simpR^+$. 
\end{proof}


\begin{proposition}
If $\gamma$ is a smooth embedding of $S^{k-1}$ in $\R^k$ and $R$ is a constructible and nondegenerate simplex distance ratio, the smooth submanifolds $C_{k+1}[\gamma]$ and $\simpR$ of $C_{k+1}[\R^k]$ are boundary disjoint.
\label{prop:simpR and cgamma are boundary disjoint}
\end{proposition}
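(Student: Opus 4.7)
The plan is to exploit Proposition~\ref{prop:simpR is submanifold}, which tells us that $\bdy \simpR \subset (1\cdots k+1) \subset \bdy C_{k+1}[\R^k]$. Consequently, any hypothetical point of $\bdy \simpR \cap \bdy C_{k+1}[\gamma]$ would have to lie in the single stratum where all $k+1$ configuration points have collapsed to one point. The entire proof is then a dimension count on the limiting direction vectors $\pi_{ij}$ at such a point: from the $\simpR$ side they must span $\R^k$, while from the $C_{k+1}[\gamma]$ side they must lie in a single tangent space $T_\bfq \gamma$, which has dimension only $k-1$.

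First I would unpack the boundary directions of $\simpR$. Any $\p \in \bdy \simpR$ is still, by the diffeomorphism $f$ of Proposition~\ref{prop:simpR is submanifold}, encoded by an honest element of $O(k)$ via $\operatorname{GS}(A_\p)$ applied to the matrix whose columns are $\pi_{12}(\p),\ldots,\pi_{1,k+1}(\p)$. The fact that the Gram--Schmidt process lands in $O(k)$ is precisely the statement that these $k$ unit vectors are linearly independent; equivalently, since $R$ is constructible and nondegenerate, the Cayley--Menger determinant is strictly positive and so the ``infinitesimal'' simplex recorded at $\p$ has positive $k$-volume. In particular, $\pi_{12}(\p),\ldots,\pi_{1,k+1}(\p)$ span $\R^k$.

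Next I would analyze the structure of $\bdy C_{k+1}[\gamma]$ inside the stratum $(1\cdots k+1)$. If all points of $\p$ collapse to a single $\bfq \in \gamma$, then by Theorem~\ref{thm:configgeom} each limiting unit direction $\pi_{ij}(\p)$ is the limit of secant directions $(\gamma(t_i)-\gamma(t_j))/|\gamma(t_i)-\gamma(t_j)|$ along the smoothly embedded sphere. Since $\gamma$ is a smooth embedding of $S^{k-1}$, every such secant limit lies in the $(k-1)$-dimensional tangent space $T_\bfq \gamma$. Hence every $\pi_{ij}(\p)$ with $\p \in \bdy C_{k+1}[\gamma] \cap (1\cdots k+1)$ is confined to a proper linear subspace of $\R^k$.

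Combining the two facts gives an immediate contradiction: $k$ vectors cannot simultaneously span $\R^k$ and lie in a $(k-1)$-plane. Therefore $\bdy \simpR \cap \bdy C_{k+1}[\gamma] = \emptyset$. The only genuine subtlety is the identification, in the first step, of boundary points of $\simpR$ with nondegenerate ``infinitesimal'' simplices whose direction data still spans $\R^k$; this is really the content of nondegeneracy of $R$ together with the explicit diffeomorphism $f$ already established in Proposition~\ref{prop:simpR is submanifold}, so once it is recorded the rest is a one-line dimension count and no further obstacles arise.
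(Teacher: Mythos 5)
Your proof is correct and follows essentially the same route as the paper's: both arguments reduce to the observation that the limiting direction data of $\bdy\simpR$ spans $\R^k$ (by nondegeneracy of $R$), while the directions of a totally collapsed configuration in $\bdy C_{k+1}[\gamma]$ are confined to the $(k-1)$-dimensional tangent space $T_\bfq\gamma$ by Theorem~\ref{thm:configgeom}. The paper packages the first fact as the nonvanishing of an $SO(k)$-invariant function $\chi\circ\Pi$ (distance of the $\pi_{ij}$ configuration to a great $S^{k-2}$), whereas you read it off from the diffeomorphism $f$ of Proposition~\ref{prop:simpR is submanifold}; this is a cosmetic, not substantive, difference.
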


\begin{proof}
Since $R$ is nondegenerate, $\bdy \simpR$ is contained in the $(1 \cdots k+1)$ face of $\bdy C_{k+1}[\R^k]$ where all points come together. 

The collection of $\pi_{ij}$ maps determines a continuous map $\Pi : C_{k+1}[\R^k] \rightarrow (S^{k-1})^{k(k+1)}$. Further, $SO(k)$ acts diagonally on both sides of this map. Since $R$ is nondegenerate, for any $\bfx \in \simpR$, the directions in $\Pi(\bfx)$ do not lie on any great $S^{k-2}$ (otherwise, the simplex would lie in a hyperplane and hence have zero volume). Let $\chi(p)$ be the squared distance between a point configuration in $(S^{k-1})^{k(k+1)}$ and the nearest configuration in a (diagonal) great $(S^{k-2})^{k(k+1)}$. Since $\chi(p)$ is invariant under the diagonal action of $SO(k)$ on $(S^{k-1})^{k(k+1)}$, and $\Pi(\bfx)$ is invariant under translation and scaling, the map $\chi \circ \Pi$ is constant and nonzero on $\simpR$. However, the infinitesimal configurations in the $(1 \cdots k+1)$ face of $\bdy C_{k+1}[\gamma]$ do lie in a great $S^{k-2}$ determined by the tangent space to $\gamma$, and so $\chi = 0$ on this face of $\bdy C_{k+1}[\gamma]$. This implies that $\bdy \simpR$ and $\bdy C_{k+1}[\gamma]$ are disjoint, as desired.
\end{proof}

Given a $C^1$  embedding of $S^{k-1}$ in $\R^k$, we can smooth it and apply Theorem~\ref{thm:transversality-config} to find a $C^1$-close smooth $(k-1)$-sphere $\gamma$ with $C_{k+1}[\gamma] \transverse \simpR^+$, using Proposition~\ref{prop:simpR and cgamma are boundary disjoint} to show that $C_{k+1}[\gamma]$ and $\simpR^+$ are boundary disjoint, as required by Theorem~\ref{thm:transversality-config}.

As before, Haefliger's Theorem~\ref{thm:isotopy} guarantees a differentiable isotopy between the standard unit $S^{k-1}$ and $\gamma$. The new step is that we invert this isotopy to get a map $E\co\simpR^+ \cross I \rightarrow C_{k+1}[\R^k]$, so that $E(-,0)$ and $E(-,1)$ are transverse to the standard unit $S^{k-1}$, $E(\simpR^+,0)$ is the standard $\simpR^+$, and there's a diffeomorphism of $\R^k$ which carries $E(-,1)$ to $\simpR^+$ and the standard $S^{k-1}$ to $\gamma$. The rest of the proof goes as before. 
\end{proof}


\section{Future Directions}

One of the recurring features of this work is that the introduction of compactified configuration spaces simplifies many of the tricky technical pieces in the proof by exporting the troublesome behavior to the boundaries. For example, applying a transversality theorem to squares and configurations of inscribed quadrilaterals requires us to have some strategy for dealing with ``degenerate'' configurations. The extension of the $\pi_{ij}$ and $s_{ijk}$ data to the boundary of configuration space (with the associated metric) allowed us to argue easily that there could be no infinitesimal squares inscribed on a smooth curve. On the other hand, this is not the only way to address these difficulties: For instance, Stromquist~\cite{MR1045781} deals with basically the same problem by showing directly that there are no squares (or square-like quadrilaterals) smaller than some $\epsilon$ which can be inscribed on a curve with some mild smoothness assumptions and hence avoids the dangerous diagonals of the product space $(\R^k)^4$. We give a similar argument in the Appendix to show:
\begin{theorem}
Any closed curve in $\R^n$ of finite total curvature with no cusps has at least one inscribed \sqrtextp
\label{prop:ftcwc}
\end{theorem}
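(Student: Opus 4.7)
The plan is to approximate the finite-total-curvature curve $\gamma$ by smooth curves, apply Theorem~\ref{thm:squarepeg} to each approximation, and extract a limit of the resulting inscribed square-like quadrilaterals. The fundamental issue we must guard against is that the quadrilaterals shrink to a point in the limit, so the heart of the argument is a quantitative ``no infinitesimal squares'' result for this class of curves.

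First, I would use standard smoothing techniques (mollification against arclength, say) to produce a sequence of $C^\infty$ embedded curves $\gamma_k$ converging uniformly to $\gamma$, with uniformly bounded total curvature. Applying Theorem~\ref{thm:squarepeg} to each $\gamma_k$ yields a $C^1$-close smooth embedded curve $\eta_k$ containing at least one inscribed square-like quadrilateral $Q_k$; by choosing the perturbation in the transversality step to have $C^1$-size $\le 1/k$, we can arrange $\eta_k \to \gamma$ uniformly as well. Once we have a uniform lower bound on the sidelengths of the $Q_k$ and use the fact that $\gamma$ is bounded, the $Q_k$ lie in a compact subset of $C_4(\R^n)$ bounded away from the fat diagonal, so a convergent subsequence can be extracted; the limit is a square-like quadrilateral inscribed in $\gamma$ because $\slq$ is closed in $C_4(\R^n)$ and $\eta_k \to \gamma$.

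The hard part is producing the uniform lower bound on sidelengths, i.e.\ the analogue for FTC-without-cusps curves of the boundary-disjointness statement $\bdy \slq \cap \bdy C_4[\eta] = \emptyset$ that powered the smooth case. My approach would be local: near any point $p \in \gamma$, the no-cusp hypothesis guarantees well-defined left and right tangent rays whose directions do not sum to zero, so the tangent cone at $p$ is either a single line (at smooth points) or a nondegenerate wedge with opening angle bounded away from zero (at corners). Using a Milnor-style total curvature bound, on a sufficiently small scale the curve near $p$ remains $C^0$-close to this tangent cone. Any family of square-like quadrilaterals collapsing into $p$ would then force the four limiting direction vectors $\pi_{ij}$ to lie in the union of at most two lines, which is incompatible with the conditions $s_{124} = s_{231} = s_{342} = 1$ and $s_{132} = s_{241}$ defining $\slq$ (four equal sides and two longer equal diagonals cannot be realized with direction vectors confined to two lines through the origin, since this would force collinearity up to first order). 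Making this quantitative with the uniform total curvature bound of the $\eta_k$ near any point of $\gamma$ gives the desired $\epsilon > 0$.

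The main obstacle is thus the local geometric argument: carefully quantifying, in terms of the modulus of continuity of the tangent direction provided by bounded total curvature, how small a square-like quadrilateral can be when pinned near a generic (possibly singular) point of $\gamma$. Stromquist's treatment in \cite{MR1045781} of an analogous ``no small squares'' statement for his class of curves should provide a useful template, and the no-cusp hypothesis is exactly what prevents the degenerate configurations on the $(1234)$ face of $\bdy C_4[\R^n]$ where two opposite tangent directions could collude to produce a vanishing square. Once this lemma is in hand, the rest of the proof is the compactness-and-limit argument sketched above.
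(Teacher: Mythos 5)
Your overall architecture --- smooth the curve, apply Theorem~\ref{thm:squarepeg} to each approximation, prove a uniform lower bound on the sidelengths of the resulting inscribed quadrilaterals, and pass to a limit by compactness --- is exactly the architecture of the paper's proof. The difficulty is that essentially all of the content of the theorem lives in the step you defer: the quantitative ``no small \sqrtexts'' bound. You correctly identify it as the main obstacle, but you do not prove it, and the mechanism you sketch has two concrete problems. First, you propose to control $\eta_k$ near a point of $\gamma$ via ``the uniform total curvature bound of the $\eta_k$,'' but you only ask the transversality perturbation to be $C^1$-small; a $C^1$-small perturbation gives no control on total curvature, so the $\eta_k$ could acquire arcs of curvature $\ge \pi$ at arbitrarily small scales, which is precisely the degeneration you need to exclude. (The fix exists: the multijet transversality theorem yields $C^\infty$-small perturbations, and the paper takes them $C^2$-small so that the perturbed curves still converge to $\gamma$ in total curvature in the sense of Definition~\ref{def:uniform in pos len tc}.) Second, your local claim --- that rescaled collapsing quadrilaterals converge to configurations on the tangent cone and that direction vectors confined to two lines are incompatible with the conditions defining $\slq$ --- is asserted rather than proved, and making it uniform is genuinely delicate: an $\FTC$ curve has countably many corners, the scale at which an arc is $C^0$-close to its tangent cone is not uniform in the basepoint, and the bound must hold for all the $\eta_k$ simultaneously, not just for $\gamma$.

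The paper fills this gap with a cleaner, purely one-dimensional mechanism worth comparing with yours. Lemma~\ref{lem:sqrturn} shows by elementary triangle geometry that any \sqrtext $pqrs$, viewed as a polygonal arc, has total turning angle at least $\pi$; since the total curvature of an arc dominates the turning of any inscribed polygon, an inscribed \sqrtext of sidelength $\ell$ forces a subarc of the host curve with endpoints at distance $\ell$ and total curvature at least $\pi$. One then defines the $\pi$-distance $\operatorname{\pi-d}(\gamma)$ (Definition~\ref{def:par}) as the supremum of the $\ell$ below which no such subarc exists, shows $\operatorname{\pi-d}(\gamma)>0$ for embedded $\FTCWC$ curves by outer regularity of the curvature measure (a sequence of short subarcs with curvature $\ge\pi$ shrinking to a point would create an atom of mass $\pi$, i.e.\ a cusp --- this is where the no-cusp hypothesis actually enters, rather than through the geometry of the tangent wedge), and proves in Proposition~\ref{prop:bounded pi d} that $\operatorname{\pi-d}$ is stable under convergence in position, arclength, and total curvature, which supplies the uniform sidelength bound over the whole approximating sequence at once. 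If you complete your write-up, I recommend replacing the tangent-cone lemma with this turning-angle argument; it sidesteps every one of the uniformity issues above.
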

We note that since this result is obtained by a limit argument, we cannot rule out the possibility that several squares come together in the limit to leave an even number of squares inscribed in the final curve, as in the examples of \cite{2008arXiv0810.4806P}. The appeal of this result is largely that the class of curves of finite total curvature is a well-understood space~(cf.~\cite{math.GT/0606007}). It is not hard to see that Stromquist's theorem~\cite{MR1045781} is more general.

A very interesting possible extension of the methods here would be to use the 1-jet version of multijet transversality to try to prove a transversality theorem for submanifolds of configuration spaces which do intersect in certain boundary faces. Doing so would allow one to extend the ``counting'' and homology arguments above to detect boundary intersections between submanifolds of configuration spaces. For example, one might try to argue in this way that the space of triangles with a given angle inscribed in a curve had the homology of the torus, keeping in mind that a circle's worth of such ``triangles'' would be expected to be chords meeting the tangent to the curve in the specified angle.
Another interesting use for such a theorem would be to try to extend these theorems to immersed curves with normal crossings (as opposed to simply studying embedded curves). 

We have proved that the space of curves with an odd number of squares are $C^1$-dense among $C^1$ curves in the plane (or residual among smooth curves). This is not quite the same as proving that a ``generic'' $C^1$ curve has an odd number of inscribed squares. It would be very interesting to try to extend these results to a set of curves which was full-measure among plane curves according to some natural measure on curves, as Morgan does in~\cite{1978InMat..45..253M} for space curves bounding a unique area-minimizing surface.

\begin{acknowledgements}
The authors would like to first thank Gerry Dunn who introduced us to the problem. We would also like to thank the people who have discussed the problem with us over the years: Jordan Ellenberg, Richard Jerrard, Rob Kusner, Benjamin Matschke,  Igor Pak, Strashimir Popvassiliev, John M. Sullivan, Cliff Taubes, and Gunter Ziegler.
\end{acknowledgements}

\bibliography{ngons,oldreferences,paperslib}{}
\bibliographystyle{plain}

\newpage
\appendix*
\section{Finite Total Curvature Curves without Cusps}

We have shown that every $C^1$ curve in $\R^k$ is $C^1$-close to a smooth curve with an odd number of inscribed \sqrtextsp This means that any curve which may be approximated by a sequence of $C^1$ curves may be approximated by a sequence of smooth curves with inscribed squares. Can we use this argument to extract at least one limiting inscribed \sqrtext on any curve in $\R^k$? The problem is clear: The sequence of \sqrtexts on the approximating curves may have sidelengths approaching zero. If one could construct a general lower bound on these sidelengths in terms of the global geometry of the ``host'' curve, this possibility could be ruled out. We do not know of any explicit example of a family of curves where all the inscribed \sqrtexts have sidelengths converging to zero, so this approach may yet be possible. However, this line of attack has been more or less obvious from the start, and nobody has managed to construct such an argument in the past century. 

Our considerably more modest goal in this section is to rule out small \sqrtexts using \emph{local}, rather than global, data about the limit curve, and in this way to extend our results to the class of curves of \emph{finite total curvature without cusps} ($\FTCWC$), which is defined below.

Our argument has three parts. First, we show that each curve $\gamma$ in $\FTCWC$ has no inscribed \sqrtexts with side length smaller than a positive constant, denoted by $\operatorname{\pi-d}(\gamma)$. Next, we show that $\gamma$ is the limit of a sequence of smooth curves $\gamma_i$, for which $\operatorname{\pi-d}(\gamma_i) \rightarrow \operatorname{\pi-d}(\gamma)$, each containing an odd number of inscribed \sqrtextsp  The first two steps then imply that this sequence of \sqrtexts has a convergent subsequence with limit a \sqrtext with sidelength at least $\operatorname{\pi-d}(\gamma)$.

We recall some standard facts about curves of finite total curvature~\cite{math.GT/0606007, MR0176402}. The \emph{total curvature} of a curve is the supremum of the total turning angles of all polygons inscribed in the curve. If this supremum is finite, we say the curve has \emph{finite total curvature} or is in $\FTC$. 
Curves in $\FTC$ have a number of desirable properties. They are always rectifiable, and so can be parametrized by arclength. They are almost everywhere differentiable, and a curve in $\FTC$ has one-sided tangent vectors at every point. In fact, these tangents differ only at countably many corner points. There is a Radon measure $\kappa$ on every $\gamma$ in $\FTC$ whose mass on any open subarc of $\gamma$ is the total curvature (in the above sense) of the subarc. This measure has a countable number of atoms at corners of the curve $\gamma$. The mass of each atom is the turning angle between these vectors. If this turning angle is $\pi$, we say the corner is a \emph{cusp}. 

Since $\FTC$ curves have a second derivative (at least weakly) it is natural to want to approximate them ``in $C^2$'' by smooth curves. Unfortunately, this is not quite possible. Note that the tangent indicatrix to an $\FTC$ curve has gaps at the corners of the curve, while the tangent indicatrix of a smooth curve forms a continuous curve on $S^2$. Thus the tangent vectors to a sequence of smooth curves approximating an $\FTC$ curve can't converge to tangents of the $\FTC$ curve near a corner of the $\FTC$ curve. However, we can come very close to a $C^2$ approximation in the following sense:

\newpage

\begin{definition}
Suppose $\gamma$ is an $\FTC$ curve. Let $\Len(\gamma,a,b)$ be the length of the arc of $\gamma$ between $\gamma(a)$ and $\gamma(b)$ and $\kappa(\gamma,a,b)$ be the total curvature of this arc. We say that a sequence of finite total curvature curves $\gamma_i$ approximate $\gamma$ \emph{uniformly in position, arclength, and total curvature} if there are parametrizations of the $\gamma_i$ so that for each $\epsilon > 0$ there exists an $N$ so that for all $i > N$, we have the following:
\begin{enumerate}
\item For any $a$, $\norm{\gamma_i(a) - \gamma(a)} < \epsilon$.
\item For any arc $(a,b)$, $\norm{\Len(\gamma_i,a,b) - \Len(\gamma,a,b)} < \epsilon$.
\item For any arc $(a,b)$, $\norm{\kappa(\gamma_i,a,b) - \kappa(\gamma,a,b)} < \epsilon$.
\end{enumerate}
\label{def:uniform in pos len tc}
\end{definition}

\begin{proposition} 
\label{prop:approx}
Any $\FTC$ curve $\gamma$ may be approximated uniformly in position, arclength, and total curvature by smooth $\FTC$ curves $\gamma_i$.
\end{proposition}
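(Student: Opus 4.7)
My plan is to construct $\gamma_i$ by standard mollification of the unit tangent of $\gamma$. Parametrize $\gamma\colon[0,L]\to\R^k$ by arclength, so its unit tangent $T(s)$ is defined almost everywhere and has bounded variation, with variation measure on any $[a,b]$ equal to $\kappa(\gamma,a,b)$. Fix a smooth even mollifier $\phi_\epsilon$ with support in $[-\epsilon,\epsilon]$ and $\int\phi_\epsilon=1$, extend $T$ suitably off $[0,L]$ (periodically if $\gamma$ is closed), and set $T_\epsilon:=T*\phi_\epsilon$. Define $\gamma_\epsilon(s):=\gamma(0)+\int_0^s T_\epsilon(u)\,du$. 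Since $T_\epsilon$ is smooth, so is $\gamma_\epsilon$, and the proposed approximating sequence is $\gamma_i:=\gamma_{\epsilon_i}$ for a suitable $\epsilon_i\to 0$.

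\textbf{Verification of the three conditions.} Position convergence is immediate from
\begin{equation*}
|\gamma_\epsilon(s)-\gamma(s)|=\left|\int_0^s(T_\epsilon(u)-T(u))\,du\right|\leq \|T_\epsilon-T\|_{L^1([0,L])}\to 0.
\end{equation*}
For arclength, $\Len(\gamma_\epsilon,a,b)=\int_a^b |T_\epsilon(u)|\,du$. Since $T$ is a unit vector field, $|T_\epsilon|\leq 1$ pointwise (averages of unit vectors), and $|T_\epsilon(u)|\to 1$ at every continuity point of $T$, so dominated convergence yields $\Len(\gamma_\epsilon,a,b)\to b-a=\Len(\gamma,a,b)$. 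For total curvature, smoothness of $\gamma_\epsilon$ makes $\kappa(\gamma_\epsilon,a,b)$ equal to the total variation of the unit tangent $T_\epsilon/|T_\epsilon|$ on $[a,b]$. The classical fact that convolution does not increase total variation gives the upper bound $\int_a^b|T_\epsilon'|\leq |dT|([a-\epsilon,b+\epsilon])$, while BV lower semicontinuity gives $\liminf_{\epsilon\to 0}\int_a^b|T_\epsilon'|\geq \kappa(\gamma,a,b)$; the denominator $|T_\epsilon|\to 1$ contributes only a vanishing correction.

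\textbf{Main obstacle.} The definition demands the three estimates to hold \emph{uniformly} in the subarc $(a,b)$, not just pointwise. The subtle case is total curvature at atoms of the curvature measure $d\kappa$: an endpoint $a$ or $b$ falling within an $\epsilon$-window of a corner with turning angle $\theta$ lets mollification redistribute the atom over that window, and the comparison on $(a,b)$ can fail by up to $\theta$. I handle this by splitting atoms into \emph{large} (mass $\geq\eta$) and \emph{small} (mass $<\eta$): only finitely many atoms are large, and for $\epsilon$ smaller than half the spacing between consecutive large atoms, the worst-case error over any $(a,b)$ from large atoms is bounded by a single atom's mass, while the aggregate contribution of small atoms is at most $\eta\cdot C$ for a geometric constant $C$. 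Choosing $\eta=\epsilon_i/2$ and then $\epsilon_i$ sufficiently small yields the required uniform bound. Uniformity of position and arclength estimates is then immediate from the $L^\infty$ and $L^1$ controls above.
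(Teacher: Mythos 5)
Your mollification strategy is a genuinely different route from the paper's (which inscribes polygons with vertices equally spaced by arclength, invokes the Fr\'echet-distance--arclength inequality of \cite{MR0176402} for condition (2), and only smooths at the very end by rounding corners), and much of it is sound: the $L^\infty$ and $L^1$ controls give conditions (1) and (2) uniformly over arcs, and the convolution bound $\int_a^b\abs{T_\epsilon'}\le \abs{dT}([a-\epsilon,b+\epsilon])$ together with lower semicontinuity controls curvature up to an $\epsilon$-enlargement of the endpoints --- arguably more cleanly than the polygon route, since you never need the quantitative Fr\'echet inequality. But two steps do not hold up as written. First, passing from the total variation of $T_\epsilon$ to that of $T_\epsilon/\abs{T_\epsilon}$ is not a ``vanishing correction'': inside the smoothing window of a corner of turning angle $\theta$, $\abs{T_\epsilon}$ is only bounded below by roughly $\cos(\theta/2)$, so normalization degrades the upper bound multiplicatively, and at a cusp ($\theta=\pi$) $T_\epsilon$ can vanish, so $T_\epsilon/\abs{T_\epsilon}$ is undefined and $\gamma_\epsilon$ need not be an immersion --- and the proposition is stated for all $\FTC$ curves, cusps included. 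You need a direct argument that the turning of $T_\epsilon/\abs{T_\epsilon}$ across a corner window converges to $\theta$ (the normalized convolution traces the minor great-circle arc between the two one-sided tangents), not a TV inequality.

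Second, the large/small atom fix for uniformity does not work. If an endpoint $b$ lies in the $\epsilon$-window of a corner of angle $\theta\ge\eta$, the discrepancy between $\kappa(\gamma_\epsilon,a,b)$ and $\kappa(\gamma,a,b)$ really is of order $\theta$ (about $\theta/2$ when $b$ is the corner itself); ``bounded by a single atom's mass'' is not small, and no choice of $\eta$ or $\epsilon_i$ shrinks it, since the large atoms' masses are fixed features of $\gamma$ while a smooth curve's curvature measure has no atoms at all. Indeed, condition (3) of Definition~\ref{def:uniform in pos len tc} cannot be satisfied verbatim by \emph{any} smooth approximants once $\gamma$ has a corner at $s_0$: comparing the three arcs $(s_0-\delta,s_0)$, $(s_0,s_0+\delta)$, $(s_0-\delta,s_0+\delta)$ forces $\theta\le 3\epsilon$. (The paper's rounded polygons have the same defect, so this is really a flaw in the definition.) The honest repair is to prove the enlarged-arc comparison $\kappa(\gamma_i,a,b)\le\kappa(\gamma,a-\epsilon,b+\epsilon)+\epsilon$ together with its lower counterpart --- which your convolution and semicontinuity bounds already deliver, and which is all that the application in Proposition~\ref{prop:bounded pi d} uses, since that argument enlarges $(a,b)$ to $(a',b')$ before comparing curvatures.
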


\begin{proof}
This is an assembly of standard results about $\FTC$ curves. If we inscribe polygons with vertices equally spaced by arclength in $\gamma$, and parametrize them compatibly (so that the vertices have the same parameter values on $\gamma$ and on each polygon), the polygons converge uniformly in position and total curvature (cf. Lemma~4.2 of \cite{MR3048513}) and are all finite-total curvature curves (since their total curvatures are bounded by that of $\gamma$). 

To see that they converge uniformly in arclength, fix an arc $(a,b)$ of $\gamma$, and observe that the corresponding arcs of the $\gamma_i$ have bounded total curvature, and converge to the arc of $\gamma$ in Fr\'echet distance because they converge in position. Then use Theorem~5.1 of \cite{MR0176402} (see also \cite{MR2352603}) which states that for any rectifiable curves $K$, $L$,
\begin{equation*}
|\Len(K) - \Len(L)| \leq \delta(K,L) (\pi \max{\TC(K),\TC(L)} + 2)
\end{equation*}
where $\delta(K,L)$ is the Fr\'echet distance between $K$ and $L$. Note that this theorem is not obvious: it says that the standard examples of curves which converge in Fr\'echet distance but \emph{not} in arclength, such as a stairstep curve converging to the diagonal of a square, must all have unbounded total curvature. 

To finish the proof, smooth each polygon by rounding off corners-- the smooth curves have the same total curvature as the polygons (and are hence $\FTC$) and are close to the original polygons in position, arclength, and total curvature, as required.
\end{proof}


Notice that if a \sqrtext $pqrs$ is inscribed in an arc of $\gamma$, the total curvature of the arc $\arc{pqrs}$ must be at least as large as the total curvature (or total turning angle) of the inscribed polygon $pqrs$. If $pqrs$ is a planar square, it is clear that this turning angle is $\pi$. We now prove that the turning angle is at least $\pi$ if $pqrs$ is a \sqrtextp

\begin{lemma} \label{lem:sqrturn} Any \sqrtext $pqrs$ has the property that $\kappa(pqrs) \geq \pi$, with equality if and only if $pqrs$ is a planar square.
\end{lemma}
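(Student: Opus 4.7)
The plan is to reduce the turning-angle inequality to a geometric bound on the ratio of the diagonal $m = |pr| = |qs|$ to the side length $\ell = |pq| = |qr| = |rs| = |sp|$ of the \sqrtextp  First, I would interpret $\kappa(pqrs)$ as the sum of exterior angles at the interior vertices $q$ and $r$ of the polygonal path $p \to q \to r \to s$ (which the previous paragraph already identifies as equal to $\pi$ for a planar square). The law of cosines applied to the two isoceles triangles $pqr$ and $qrs$, both with sides $\ell, \ell, m$, shows that the interior angles at $q$ and at $r$ each equal $\alpha = \arccos\bigl(1 - m^2/(2\ell^2)\bigr)$, so $\kappa(pqrs) = 2(\pi - \alpha)$. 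The inequality $\kappa(pqrs) \geq \pi$ is then equivalent to $\alpha \leq \pi/2$, i.e.\ to $m \leq \ell\sqrt{2}$, with equality iff $\alpha = \pi/2$.

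What remains is to show that any \sqrtext in $\R^k$ satisfies $m \leq \ell\sqrt{2}$, with equality iff the quadrilateral is planar. I would do this with the unit edge vectors $u_1, u_2, u_3, u_4$ along $pq, qr, rs, sp$, which satisfy the closure relation $u_1 + u_2 + u_3 + u_4 = 0$. The equal-sides, equal-diagonals hypotheses force all four cyclic adjacent dot products $u_i \cdot u_{i+1}$ to equal a common value $a = m^2/(2\ell^2) - 1$, and the two remaining dot products $b = u_1 \cdot u_3$ and $c = u_2 \cdot u_4$ to be equal (since $u_1 + u_3 = -(u_2 + u_4)$). Squaring the closure relation yields $4a + 2b = -2$, so $b = -1 - 2a$, and the Cauchy--Schwarz bound $|b| \leq 1$ then forces $-1 \leq a \leq 0$; the conclusion $a \leq 0$ is precisely the desired inequality $m \leq \ell\sqrt{2}$.

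For the equality case, $a = 0$ forces $b = -1$, so $u_1 + u_3 = 0$ and similarly $u_4 = -u_2$, placing all four unit edge vectors in the $2$-plane $\Span(u_1, u_2)$ with $u_1 \perp u_2$; the quadrilateral is therefore a planar square. I expect no real obstacle beyond the algebraic unwinding of the closure relation. As a possible alternative, one could invoke the Cayley--Menger determinant already used in the paper: a short calculation expresses the squared volume of the tetrahedron $pqrs$ as a positive multiple of $m^4(2\ell^2 - m^2)$, so non-negativity of the volume directly gives $m \leq \ell\sqrt{2}$, with vanishing volume (equivalently, planarity) corresponding to equality.
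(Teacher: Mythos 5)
Your argument is correct, and the reduction is the same as the paper's: both proofs compute the turning angles at $q$ and $r$ from the isosceles triangles $pqr$ and $qrs$ and thereby reduce the lemma to the single inequality $m \le \sqrt{2}\,\ell$ between the diagonal and the side (in the paper's notation, $\theta \le \pi/4$ where your $\alpha = 2\theta$). Where you diverge is in how that inequality is established. The paper gives a two-line synthetic argument: drop the midpoint $t$ of the diagonal $qs$, observe $pt = rt = \ell\cos\theta$ while $pr = 2\ell\sin\theta$, and apply the triangle inequality to $\triangle ptr$, with equality exactly when $t$ lies on $pr$, i.e.\ when the configuration is planar. You instead square the closure relation $u_1+u_2+u_3+u_4=0$ for the unit edge vectors, use the equal-sides/equal-diagonals hypotheses to show all adjacent dot products equal $a = m^2/(2\ell^2)-1$, and extract $a\le 0$ from Cauchy--Schwarz applied to $u_1\cdot u_3 = -1-2a$; the equality case $u_3=-u_1$, $u_4=-u_2$, $u_1\perp u_2$ then identifies the planar square cleanly. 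Both routes are elementary and complete; the paper's is shorter and more visual, while yours is more mechanical, handles the equality case somewhat more explicitly, and (via the Cayley--Menger variant you sketch, which correctly gives $V^2$ proportional to $m^4(2\ell^2-m^2)$) ties the degeneracy condition back to the distance-geometry machinery already used elsewhere in the paper.
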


\begin{proof}
Consider the situation of Figure~\ref{fig:tetraturning} where $pqrs$ has equal sides $pq$, $qr$, $rs$, and $sq$ and equal diagonals $pr$ and $qs$.
\begin{figure}
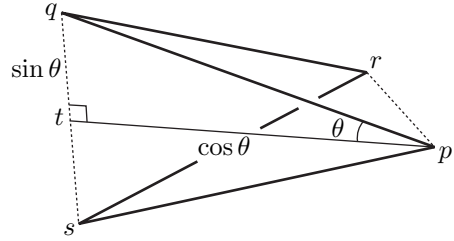

\begin{overpic}{tetraturning}
\put(92.5,22){$p$}
\put(10,52){$q$}
\put(13.8,6){$s$}
\put(78,41){$r$}
\put(42,22.7){$\cos \theta$}
\put(3,39){$\sin\theta$}
\put(12,29){$t$}
\put(70,26.5){$\theta$}
\end{overpic}
\caption{The arc $\arc{pqrs}$ of the \sqrtext shown has total curvature given by $2\pi - 4\theta$. We observe, however, that $pt$ has length $\cos \theta$ and $qt$ has length $\sin \theta$, while $2qt = qs = pr$ is less than $2pt$. Thus $\cos \theta \geq \sin \theta$ and $\theta \leq \pi/4$.}
\label{fig:tetraturning}
\end{figure} 
We may assume without loss of generality that the sides have length 1. We construct the midpoint $t$ of $qs$. Since $\triangle pqs$ is isosceles, we can conclude that $\angle qps = 2 \theta$ and that $\angle ptq$ is right. We then have $pt = \cos \theta$ and $qt = \sin \theta$. Further, since $qs = pr$, we have $pr = 2 \sin \theta$.

Since $pq = rq$ and $ps = rs$, we have $\triangle rqs \cong \triangle pqs$. Thus $\angle qrs = \angle qps = 2 \theta$ and as above $rt = \cos \theta$. So by the triangle inequality (on $\triangle ptr$) we have $pt + tr \geq pr$, or
\begin{equation*}
2 \cos \theta \geq 2 \sin \theta.
\end{equation*}
This means that $\theta \leq \pi/4$, and $\theta = \pi/4$ if and only if $t$ is on the line $pr$. In this case $pqrs$ is planar (and hence it is a square). Now the turning angle of the arc $pqrs$ is $\pi - 2\theta$ at $q$ and $r$. Thus the total turning angle of $pqrs$ is $2 \pi - 4\theta \geq \pi$, as desired.
\end{proof}


Our overall goal is to prove that there exists an $\epsilon > 0$ for each curve in $\FTCWC$ so that no \sqrtext inscribed in $\gamma$ has sidelength less than $\epsilon$.


\begin{definition}
\label{def:par}
We define the \textbf{$\pi$-distance} of an FTC curve $\gamma$, denoted $\operatorname{\pi-d}(\gamma)$.
The value $\ell$ is an {\it admissible distance bound} if every open subarc $(a,b)$ of $\gamma$ with $\norm{\gamma(a) - \gamma(b)} < \ell$ has $\kappa(\gamma,a,b) < \pi$. Then 
\begin{equation*}
\operatorname{\pi-d}(\gamma) =  \sup_{\ell \text{ is admissible}} \ell  = 
\inf_{\ell \text{ is inadmissible}} \ell.
\end{equation*}
\end{definition}

Note that if $\ell$ is inadmissible, then there is some subarc $(a,b)$ with $\norm{\gamma(a) - \gamma(b)} < \ell$, but $\kappa(\gamma,a,b) \geq \pi$. The point of $\operatorname{\pi-d}(\gamma)$ is that it provides a lower bound on the side length of a \sqrtext inscribed in $\gamma$.

\begin{lemma}\label{lem:pi d}
Any \sqrtext inscribed in an FTC curve $\gamma$ has sidelength greater than or equal to $\operatorname{\pi-d}(\gamma)$. 
\end{lemma}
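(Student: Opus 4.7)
The plan is to exhibit a single subarc of $\gamma$ whose chord has length equal to the sidelength $\ell$ of the inscribed \sqrtext and whose total curvature is at least $\pi$; the lemma will then follow immediately from Definition~\ref{def:par} by taking the infimum over inadmissible distance bounds.

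First I would fix an inscribed \sqrtext $pqrs$ in $\gamma$ with sidelength $\ell$, assuming (via the ordered component $\ccng$ of Definition~\ref{def:ccng}) that $p, q, r, s$ appear in this cyclic order along $\gamma$. Let $A$ denote the subarc of $\gamma$ from $p$ to $s$ passing through $q$ and $r$. Then the polygonal path $p \to q \to r \to s$ is inscribed in $A$, so by the supremum-over-inscribed-polygons definition of total curvature, $\kappa(A)$ is at least the total turning of this polygonal path, measured at its interior vertices $q$ and $r$. Reusing the geometric calculation from the proof of Lemma~\ref{lem:sqrturn}, the interior angle of the polygon at $q$ (and by symmetry at $r$) equals $2\theta$, where $\theta \leq \pi/4$ is the half-angle of the isosceles triangle $\triangle pqs$. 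Hence the turning at each of $q$ and $r$ is $\pi - 2\theta$, the total turning of the inscribed polygon is $2\pi - 4\theta \geq \pi$, and therefore $\kappa(A) \geq \pi$.

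Finally, since $ps$ is a side of the \sqrtext, the chord joining the endpoints of $A$ satisfies $\norm{p-s} = \ell$. Thus for every $\ell' > \ell$, the subarc $A$ witnesses that $\ell'$ is not an admissible distance bound (it has chord strictly less than $\ell'$ while its total curvature is at least $\pi$); taking the infimum over all such inadmissible $\ell'$ yields $\operatorname{\pi-d}(\gamma) \leq \ell$, as required. I do not expect any serious obstacle here --- the only point that requires care is the cyclic-order assumption, which is naturally enforced by restricting to $\ccng$ as in the proof of Theorem~\ref{thm:squarepeg}. A small alternative, if one wanted to drop the ordered-component hypothesis, would be to run the same calculation on whichever three consecutive vertices along $\gamma$ are convenient; in every cyclic order one obtains an arc with chord length at most $\ell$ and total curvature $\geq \pi$ using the bound $m \leq \ell\sqrt{2}$ on the diagonal from Lemma~\ref{lem:sqrturn}.
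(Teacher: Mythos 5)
Your proof is correct and follows essentially the same route as the paper's: consider the arc of $\gamma$ from $p$ to $s$ through $q$ and $r$, apply Lemma~\ref{lem:sqrturn} to bound its total curvature below by $\pi$ via the inscribed polygon $pqrs$, and conclude from Definition~\ref{def:par} that the chord length $\norm{p-s}=\ell$ bounds $\operatorname{\pi-d}(\gamma)$ from above. You are in fact slightly more careful than the paper about the strictness in the definition of an admissible bound and about the cyclic-order assumption (which is indeed what holds in the application, since the quadrilaterals arise from $\cfog$).
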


\begin{proof}
Let $pqrs$ be an inscribed \sqrtext in $\gamma$, and consider the arc $pqrs$ which has end-to-end distance $\norm{\gamma(p) - \gamma(s)}$. By Lemma~\ref{lem:sqrturn}, the \sqrtext is an inscribed polygon with total curvature at least $\pi$. Thus $\kappa(\gamma,p,s) \geq \pi$. This means that $\norm{\gamma(p) - \gamma(s)}$ is an inadmissible distance bound, and hence it is at least $\operatorname{\pi-d}(\gamma)$, as desired.
\end{proof}

We now want to show that an embedded curve in $\FTCWC$ is the limit of a sequence of smooth curves with inscribed \sqrtexts with side lengths uniformly bounded above zero. We proceed in two steps: first we'll show that $\gamma$ itself has $\operatorname{\pi-d}$ bounded above, then that $\operatorname{\pi-d}$ behaves nicely under the sort of convergence of curves we introduced above.

\begin{lemma}
\label{lem:padgzero}
If $\gamma$ is an embedded curve in $\FTCWC$, then $\operatorname{\pi-d}(\gamma) > 0$.
\end{lemma}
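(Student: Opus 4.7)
My approach is a compactness-plus-contradiction argument that concentrates the curvature near a single limit point and then excludes this via the no-cusps hypothesis.

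Suppose for contradiction that $\operatorname{\pi-d}(\gamma) = 0$. Then for each positive integer $n$ there is an open subarc $\alpha_n = (a_n, b_n)$ of $\gamma$ with $\norm{\gamma(a_n) - \gamma(b_n)} < 1/n$ and $\kappa(\gamma, a_n, b_n) \geq \pi$. Since the parameter space $S^1$ is compact, passing to a subsequence I may assume $a_n \to a^*$ and $b_n \to b^*$. The chord condition combined with continuity of $\gamma$ forces $\gamma(a^*) = \gamma(b^*)$, and embeddedness of $\gamma$ then forces $a^* = b^*$; call the common image $p^*$.

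Next I would show that the subarcs $\alpha_n$ themselves shrink to $p^*$ in $\gamma$. Because $a_n$ and $b_n$ both converge in $S^1$ to the same parameter $a^*$, the parameter interval defining $\alpha_n$ collapses, so the image $\alpha_n$ is eventually contained in any prescribed open neighborhood $U$ of $p^*$ in $\gamma$. Therefore $\kappa(\alpha_n) \leq \kappa(U)$ for all sufficiently large $n$. To close the argument, I would invoke the fact that $\kappa$ is a finite Radon measure on $\gamma$: continuity from above along the decreasing family of open neighborhoods of $p^*$ gives $\kappa(U) \downarrow \kappa(\{p^*\})$. The no-cusps hypothesis guarantees $\kappa(\{p^*\}) < \pi$, since this atom is $0$ at a regular point and equals the turning angle, strictly less than $\pi$, at a corner. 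Choosing $U$ small enough that $\kappa(U) < \pi$ yields $\kappa(\alpha_n) < \pi$ for all large $n$, contradicting $\kappa(\alpha_n) \geq \pi$.

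The main technical point to handle carefully is the arclength-shrinking step: one must be confident that the admissibility condition isolates short subarcs, rather than the complementary long arc of a closed curve. For any embedded closed curve, Fenchel's inequality gives total curvature $\geq 2\pi$, so the complementary long arc between $a_n$ and $b_n$ has total curvature approaching at least $2\pi - \kappa(\{p^*\}) > \pi$, which would otherwise spoil the argument. This is the single place where the precise meaning of ``open subarc $(a,b)$'' in the definition matters---namely, it should denote the subarc whose parameter interval collapses as $a \to b$ in $S^1$, consistent with how $\operatorname{\pi-d}$ is used in Lemma~\ref{lem:pi d}. Once that convention is fixed, the rest is an immediate consequence of the continuity of a finite Radon measure on its atoms.
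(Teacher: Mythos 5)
Your argument is correct and follows essentially the same route as the paper: concentrate the offending arcs at a single point via compactness and embeddedness of $\gamma$, then use outer regularity (continuity from above) of the finite curvature measure together with the no-cusps hypothesis to cap the atom at that point strictly below $\pi$. Your added remark about which of the two arcs between $a_n$ and $b_n$ the admissibility condition must refer to is a genuine subtlety that the paper's proof passes over silently, and your resolution---the arc whose parameter interval collapses---is the reading needed for the argument to go through.
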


\begin{proof} 
Suppose not. Since $\operatorname{\pi-d}(\gamma) = 0$, there is a sequence of inadmissible $\ell_i \rightarrow 0$. So there exists a collection of open subarcs $A_i$ of $\gamma$ whose endpoints $a_i$, $b_i$ have $\norm{\gamma(a_i) - \gamma(b_i)} \rightarrow 0$, while $\kappa(\gamma,a_i,b_i) \geq \pi$. Passing to a subsequence where $a_i \rightarrow a$ and $b_i \rightarrow b$, we see that $\gamma(a) = \gamma(b)$, and hence $a = b$ because $\gamma$ is embedded. 

Now as the $A_i$ approach $\{a\}$, their total curvature $\kappa(A_i) \geq \pi$. Since $\gamma$ is compact, we may pass to a subsequence of $A_i$ that are nested and converge to a point $p$. Since $\kappa$ is an outer-regular measure, this means that $\kappa(p) \geq \pi$. Since $\kappa(p)$ is a turning angle, it is always $\leq \pi$. Thus $\kappa(p) = \pi$ and $p$ is a cusp point, contradicting our assumption that $\gamma$ was in $\FTCWC$.
\end{proof}

Since $\operatorname{\pi-d}$ is defined by lengths, distances, and curvatures, we can expect it to behave nicely as we take limits in the sense of Definition~\ref{def:uniform in pos len tc}.

\begin{proposition}
If $\gamma_i \rightarrow \gamma$ uniformly in position, arclength, and total curvature in the sense of Definition~\ref{def:uniform in pos len tc}, and $\operatorname{\pi-d}(\gamma) > 0$, then $ \varliminf_{i\rightarrow \infty} \operatorname{\pi-d}(\gamma_i) > 0$.
\label{prop:bounded pi d}
\end{proposition}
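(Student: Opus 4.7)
The plan is a proof by contradiction. Suppose $\varliminf_i \operatorname{\pi-d}(\gamma_i) = 0$; after passing to a subsequence we may assume $\operatorname{\pi-d}(\gamma_i) \to 0$. For each $i$ choose a witnessing open subarc $A_i = (a_i, b_i)$ of $\gamma_i$ with $\|\gamma_i(a_i) - \gamma_i(b_i)\| \to 0$ and $\kappa(\gamma_i, a_i, b_i) \geq \pi$. Using the compatible $S^1$-parametrizations supplied by Definition~\ref{def:uniform in pos len tc} and compactness of $S^1$, extract a further subsequence with $a_i \to a$ and $b_i \to b$. Part (1) of the definition forces $\gamma(a) = \gamma(b)$, and embeddedness of the target curve $\gamma$ gives $a = b$. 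Part (3) then yields $\liminf_i \kappa(\gamma, A_i) \geq \pi$; meanwhile, eventually $\|\gamma(a_i) - \gamma(b_i)\| < \operatorname{\pi-d}(\gamma)$, so the hypothesis forces $\kappa(\gamma, A_i) < \pi$ strictly for such $i$.

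After a further subsequence, $A_i$ is always the short arc of $S^1$ between $\{a_i, b_i\}$, or always the long arc. In the short-arc case each $A_i$ lies in $(a - \delta, a + \delta)$ for any fixed $\delta > 0$ and all large $i$, so outer regularity of the Radon measure $\kappa$ on $S^1$ gives $\limsup_i \kappa(\gamma, A_i) \leq \kappa(\{a\})$. Combined with the lower bound this forces $\kappa(\{a\}) \geq \pi$; since the atoms of $\kappa$ are turning angles, and so at most $\pi$, equality holds and $\gamma$ has a cusp at $a$. Exactly as in the proof of Lemma~\ref{lem:padgzero}, the existence of a cusp forces $\operatorname{\pi-d}(\gamma) = 0$, contradicting the hypothesis.

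In the long-arc case the shrinking object is the closed complement $S^1 \setminus A_i$, a small arc around $a$; applying the same outer-regularity argument to this complement shows that $\kappa(\gamma, A_i)$ converges either to $\kappa(\gamma, S^1 \setminus \{a\})$ (when the complementary short arc swallows $a$) or to $\kappa(\gamma, S^1)$ (when it does not). The first alternative forces $\kappa(\gamma, S^1 \setminus \{a\}) \geq \pi$, but the open arc $S^1 \setminus \{a\}$ has endpoint chord $0 < \operatorname{\pi-d}(\gamma)$, so the defining property of $\operatorname{\pi-d}(\gamma)$ forces $\kappa(\gamma, S^1 \setminus \{a\}) < \pi$, a contradiction. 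The second alternative gives $\kappa(\gamma, A_i) \to \kappa(\gamma, S^1) \geq 2\pi$ by Fenchel's theorem, eventually exceeding the strict bound $\kappa(\gamma, A_i) < \pi$ established above.

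The main obstacle is the measure-theoretic bookkeeping in the limit, particularly the bifurcation of the long-arc case into subcases according to whether the limit point $a$ is captured by $A_i$ or by its complement; both require care in matching monotone continuities of the Radon measure $\kappa$ to the right direction of nesting. The embeddedness of $\gamma$ is what collapses the pair $(a, b)$ to a single parameter, a reduction that would fail for a non-embedded limit curve and would then demand a third scenario in which the limiting arc $(a, b)$ is nondegenerate.
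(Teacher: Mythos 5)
Your argument is workable for an embedded limit curve, but it takes a substantially longer and more delicate route than necessary, and it leaves a genuine gap relative to the statement as written. The proposition does not hypothesize that $\gamma$ is embedded (embeddedness enters only in Lemma~\ref{lem:padgzero} and in the final assembly of Theorem~\ref{thm:FTCWC}). Your reduction to $a=b$, and hence your entire short-arc/long-arc atom analysis, depends on that collapse; you acknowledge the ``third scenario'' of a nondegenerate limiting arc $(a,b)$ but do not close it. The paper's proof handles exactly that scenario, and does so without ever needing the witness arcs to degenerate: fix $\epsilon>0$, take infinitely many $i$ with $\operatorname{\pi-d}(\gamma_i)<\epsilon$ and witness arcs $(a_i,b_i)$, pass to a subsequence with $a_i\to a$, $b_i\to b$, note $\norm{\gamma(a)-\gamma(b)}\le\epsilon$ by convergence in position, and enlarge $(a,b)$ slightly to a \emph{fixed} arc $(a',b')$ with $\norm{\gamma(a')-\gamma(b')}\le 3\epsilon$ containing all $(a_i,b_i)$. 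Monotonicity of $\kappa$ over nested arcs gives $\kappa(\gamma_i,a',b')\ge\kappa(\gamma_i,a_i,b_i)\ge\pi$, and uniform convergence of total curvature on the single arc $(a',b')$ lets $\delta\to 0$ to conclude $\kappa(\gamma,a',b')\ge\pi$ exactly. Hence $3\epsilon$ is inadmissible for every $\epsilon$, so $\operatorname{\pi-d}(\gamma)=0$, the desired contradiction. No atoms, no cusps, no Fenchel, no embeddedness.

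Two further comments on your version. First, after you establish $\varliminf_i\kappa(\gamma,A_i)\ge\pi$ while each $\kappa(\gamma,A_i)<\pi$, there is no contradiction yet (the values may approach $\pi$ from below), which is why you are forced into the measure-theoretic limit analysis; the paper's enlargement trick converts the asymptotic bound into the exact bound $\kappa\ge\pi$ on a fixed arc in one step, which is the idea you are missing. Second, your long-arc and short-arc subcases are correct in substance (continuity from above of the finite Radon measure $\kappa$ on shrinking neighborhoods of $a$, the bound $\kappa(\{a\})\le\pi$ on turning angles, and the observation that a cusp makes every distance bound inadmissible), but each requires the nesting direction to be checked carefully, and all of this machinery is avoidable. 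If you retain your approach, you must either add embeddedness as a hypothesis or supply the missing nondegenerate-arc case --- which, once written out, essentially reproduces the paper's argument.
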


\begin{proof}
Suppose not. For any $\epsilon > 0$, there must be infinitely many $\gamma_i$ with $\operatorname{\pi-d}(\gamma_i) < \epsilon$. Each $\gamma_i$ contains a subarc $(a_i,b_i)$ with $\norm{\gamma_i(a_i) - \gamma_i(b_i)} < \epsilon$, but $\kappa(\gamma_i,a_i,b_i) \geq \pi$. By compactness, we can assume that we have passed to a  subsequence where $a_i \rightarrow a$ and $b_i \rightarrow b$. 

Now by convergence in position, $\norm{\gamma(a) - \gamma(b)} \leq \epsilon$. Let us expand the open arc $(a,b)$ of $\gamma$ slightly to an open subarc $(a',b')$ with $\norm{\gamma(a') - \gamma(b')} \leq 3 \epsilon$, say, and again pass to a subsequence where $(a_i,b_i) \subset (a',b')$ for all $i$. Now for any $\delta > 0$, by convergence in total curvature, for large enough $i$ we have
\begin{equation*}
\norm{\kappa(\gamma,a',b') - \kappa(\gamma_i,a',b')} < \delta
\end{equation*}
so
\begin{equation*}
\kappa(\gamma,a',b') > \kappa(\gamma_i,a',b') - \delta \geq \kappa(\gamma_i,a_i,b_i) - \delta \geq \pi - \delta.
\end{equation*}
where $\kappa(\gamma_i,a',b') \geq \kappa(\gamma_i,a_i,b_i)$ because $(a_i,b_i) \subset (a',b')$. Since $\delta$ was arbitrary, this proves that $\kappa(\gamma,a',b') \geq \pi$. 

However, this means that $3 \epsilon > \norm{\gamma(a') - \gamma(b')}$ is an inadmissible distance bound for $\gamma$, and hence that $\operatorname{\pi-d}(\gamma) < 3 \epsilon$. Since $\epsilon$ was arbitrary, this proves that $\operatorname{\pi-d}(\gamma) = 0$, providing the required contradiction.
\end{proof}

We are ready to construct an inscribed \sqrtext on any $\FTCWC$ curve. We have done all the hard work above; it remains only to assemble the component pieces.

\begin{theorem}
\label{thm:FTCWC}
There is an inscribed \sqrtext on any embedded curve $\gamma$ in $\FTCWC$. In particular, there is an inscribed \sqrtext on any embedded $C^2$-smooth curve $\gamma$.
\end{theorem}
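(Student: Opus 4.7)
The plan is to combine Theorem~\ref{thm:squarepeg} with the approximation and lower-bound machinery developed earlier in this appendix. Given an embedded $\FTCWC$ curve $\gamma$, I would first apply Proposition~\ref{prop:approx} to produce smooth FTC curves $\eta_i$ converging to $\gamma$ uniformly in position, arclength, and total curvature. Each $\eta_i$ is smooth, hence $C^1$, so Theorem~\ref{thm:squarepeg} yields a curve $\gamma_i$ (arising as a $C^\infty$-small perturbation of $\eta_i$ via Theorem~\ref{thm:transversality-config}) containing an odd, positive number of inscribed \sqrtextsp I would pick one inscribed \sqrtext $Q_i$ on each $\gamma_i$. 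By choosing the perturbation $\gamma_i$ of $\eta_i$ sufficiently small in $C^2$, I can arrange that $\gamma_i \to \gamma$ also uniformly in position, arclength, and total curvature.

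Next I would extract a uniform lower bound on the sidelengths of the $Q_i$. Since $\gamma$ is embedded and lies in $\FTCWC$, Lemma~\ref{lem:padgzero} gives $\operatorname{\pi-d}(\gamma) > 0$. Proposition~\ref{prop:bounded pi d} applied to the sequence $\gamma_i \to \gamma$ then yields $\varliminf_{i\to\infty} \operatorname{\pi-d}(\gamma_i) > 0$, so there is some $\delta > 0$ with $\operatorname{\pi-d}(\gamma_i) \geq \delta$ for all sufficiently large $i$. By Lemma~\ref{lem:pi d}, every inscribed \sqrtext on $\gamma_i$ has sidelength at least $\delta$, and in particular so does each chosen $Q_i$.

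A standard compactness argument then closes the proof. Because $\gamma_i \to \gamma$ uniformly and $\gamma$ is compact, the vertices of the $Q_i$ all lie in a bounded set, so after passing to a subsequence the $Q_i$ converge to some configuration $Q$ in $\R^k$. The equal-side and equal-diagonal conditions defining $\slq$ are closed, so $Q$ is a \sqrtextp The sidelength bound $\geq \delta > 0$ guarantees $Q$ is nondegenerate, and uniform convergence of $\gamma_i$ to $\gamma$ places the vertices of $Q$ on $\gamma$. The final ``in particular'' statement then follows because any compact embedded $C^2$ curve has continuous, bounded curvature and no tangent discontinuities, so it lies in $\FTCWC$.

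The main technical obstacle is the second step: I need the perturbation $\gamma_i$ supplied by Theorem~\ref{thm:transversality-config} to be close enough to $\eta_i$ that the convergence $\eta_i \to \gamma$ in position, arclength, and total curvature is inherited by the sequence $\gamma_i$. This reduces to checking that closeness in $C^2$ controls the relevant integral quantities --- the length $\int |\gamma'|\,dt$ is continuous in the $C^1$ norm, and for smooth curves the total curvature $\int |T'(s)|\,ds$ is continuous in the $C^2$ norm --- after which a diagonal selection of the perturbations makes the whole argument go through cleanly.
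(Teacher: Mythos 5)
Your proposal is correct and follows essentially the same route as the paper: approximate by smooth curves via Proposition~\ref{prop:approx}, perturb slightly to invoke Theorem~\ref{thm:squarepeg}, bound sidelengths below via Lemma~\ref{lem:padgzero}, Proposition~\ref{prop:bounded pi d}, and Lemma~\ref{lem:pi d}, and finish with a compactness argument. Your explicit attention to why $C^2$-smallness of the perturbation preserves convergence in arclength and total curvature is a point the paper asserts without elaboration, but it is the same argument.
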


\vspace{-0.25in}

\begin{proof}
First, we may approximate $\gamma$ by a sequence of smooth curves $\gamma_i$ with convergence in position, arclength, and total curvature by Proposition~\ref{prop:approx}. By making a $C^2$-small perturbation of each $\gamma_i$, we may assume by Theorem~\ref{thm:squarepeg} that each $\gamma_i$ contains at least one inscribed \sqrtext. Since our perturbations were $C^2$-small, the sequence of curves $\gamma_i$ still enjoys finite total curvature and converges to $\gamma$ in position, arclength, and total curvature.

By Lemma~\ref{lem:padgzero} and Proposition~\ref{prop:bounded pi d}, there is an $\epsilon>0$ so that we may pass to a subsequence of $\gamma_i$, each of which has $\operatorname{\pi-d}(\gamma_i) > \epsilon$. By Lemma~\ref{lem:pi d} the inscribed \sqrtext on each $\gamma_i$ has sidelength at least $\epsilon$. This is the crucial point in the proof: by bounding the sidelengths of these \sqrtexts below, we have ensured that they do not shrink away as we approach the limiting curve $\gamma$. 

From here, the argument is standard. We may assume that the inscribed \sqrtexts in the $\gamma_i$ lie in a compact subset of $\slq$, and hence that they have a convergent subsequence. The limit of this subsequence is a \sqrtext inscribed in the limit curve $\gamma$. 
\end{proof}

Note that we have lost something here: it is possible that multiple \sqrtexts coincide on the limiting curve $\gamma$, so the count of inscribed \sqrtexts may no longer be odd, as shown by the examples of Popvassiliev~\cite{2008arXiv0810.4806P}. 

Also note that there exist $C^1$ curves that are not FTC; these don't have corners but have spirals where curvature diverges. For these curves, Theorem~\ref{thm:squarepeg} still holds, but we can not conclude from Theorem~\ref{thm:FTCWC} that there is at least one \sqrtext. (The spirals prevent the arguments of Proposition~\ref{prop:bounded pi d} from holding.)
\end{document}